\documentclass[11pt]{article}
\usepackage[]{geometry}
\usepackage{graphicx}
\usepackage{caption}
\usepackage{subcaption}
\usepackage{hyperref}
\usepackage{amsmath}
\usepackage{amsthm}
\usepackage{amssymb}
\usepackage{authblk}
\usepackage{amsmath}
\usepackage{bm}
\usepackage{xcolor}
\usepackage{float}
\usepackage{booktabs}
\usepackage{enumitem}
\usepackage{yhmath}
\usepackage{mathdots}
\usepackage{MnSymbol}

\usepackage{pgfplots}
\usepgfplotslibrary{fillbetween}

\geometry{
top=2.2cm,
bottom=2.5cm,
left=2.2cm,
right=2.2cm
}

\everymath{\displaystyle}
\allowdisplaybreaks

\newtheorem{theorem}{Theorem}
\newtheorem{definition}[theorem]{Definition}
\newtheorem{proposition}[theorem]{Proposition}
\newtheorem{remark}[theorem]{Remark}
\newtheorem{hyp}[theorem]{Hypothesis}
\newtheorem{remarks}[theorem]{Remarks}

\newtheorem{lemma}[theorem]{Lemma}

\newcommand{\dd}{\mathrm{d}}

\providecommand{\keywords}[1]
{ \small \textbf{\textit{Keywords---}}#1 }

\title{Uniqueness and stability in bottom detection through surface measurements of water waves}
\author[$\;$]{}

\affil[$\;$]{ {\large Noureddine LAMSAHEL}}
\affil[$\;$]{ {\scriptsize  Mohammed VI Polytechnic University, The UM6P Vanguard Center, Benguerir 43150, Lot 660, Hay Moulay Rachid, Morocco.}}
\affil[$\;$]{ {\scriptsize   Universit\'e du Littoral C\^ote d'Opale, Laboratoire de Math\'ematiques Pures et Appliqu\'ees J. Liouville, B.P. 699, F-62228 Calais, France}.}
\affil[$\;$]{ {\scriptsize Emails: noureddine.lamsahel@um6p.ma, noureddine.lamsahel@etu.univ-littoral.fr  }}
\affil[$\;$]{ {\large }}
\affil[$\;$]{ {\large Lionel ROSIER}}
\affil[$\;$]{ {\scriptsize   Universit\'e du Littoral C\^ote d'Opale, Laboratoire de Math\'ematiques Pures et Appliqu\'ees J. Liouville, B.P. 699, F-62228 Calais, France}.}
\affil[$\;$]{ {\scriptsize Email:  lionel.rosier@univ-littoral.fr}}
\affil[$\;$]{ {\large }}

\date{}

\begin{document}

\maketitle

\begin{abstract}
This paper investigates the geometric inverse problem of recovering the bottom shape from surface measurements of water waves.  Using the general water-waves system on a bounded subdomain of the fluid domain, we address this inverse problem, focusing on the identifiability and the stability issues. We establish uniqueness and derive logarithmic stability estimates in the determination of  the bathymetry on any fixed smooth, bounded, open domain
 ${\mathcal O}\subset {\mathbb R} ^d$, $d=1,2$, from the knowledge of the free surface, its first time derivative,   and the trace of the velocity potential on the free surface, at a given instant $t_0$ within $\mathcal O $,   together with  the knowledge of the bottom along $\partial \mathcal O$.  No further assumptions are required for uniqueness. For stability, we impose only a \textit{local fatness} condition on the region between the bottom profiles, allowing us to adapt the size estimates method.
\end{abstract}

\noindent\keywords{Water waves, Elliptic systems,  Geometric inverse problems, Identifiability,  Log-Log stability, Log stability, Size estimates, Bottom.}

\section{Introduction}\label{intropaper}
Knowledge of river and ocean bathymetry is indispensable for many engineering applications, including the design of offshore platforms and harbors \cite{brocchini2013reasoned}, the study of channel-flow hydrodynamics \cite{cunge1980practical,marks2000integration}, and the approximations of tsunami run-up \cite{grilli1994shoaling,synolakis1987runup,synolakis1991greens}. Estimating this geometry is a challenging problem that often relies on \textit{in situ} measurements \cite{irish1999scanning,lecours2016review,smith2004conventional}. However, these techniques are time-consuming and costly, especially for large aquatic domains \cite{hilldale2008assessing,lecours2016review,sagawa2019satellite}. Due to these limitations, an inverse-problem approach emerges to infer the underwater geometry by selecting appropriate governing equations for the flow dynamics and using measurements taken at the water surface. (See the review paper \cite{sellier2016inverse} and the references therein.) Crucially, surface measurements, such as the wave profile \cite{fontelos2017bottom,nicholls2008joint,nicholls2009detection,vasan2013inverse} or the surface velocity potential \cite{adrian2011piv,fontelos2017bottom}, are more accessible than bottom topography \cite{hilldale2008assessing,smart2009river}.

In this work, we formulate the detection of the bottom profile as an inverse problem in which the flow dynamics is governed by the so-called \textit{general water–waves} system \cite{lannes2013water}. This system describes the motion of a layer of an inviscid, incompressible, and irrotational fluid limited below by a solid bottom and above by a free surface, with both boundaries assumed to be parametrized. We define this spatial region as 
$$\Omega_t=\{(X,y)\in\mathbb{R}^d\times\mathbb{R},\; b(X)<y<\zeta(t,X)\},\quad d\in \{1,2\}.$$
From \cite{lannes2013water}, the system reads
\begin{equation}\label{pd0}
	\left\{
	\begin{array}{ll}
		\Delta\phi =0,&\quad \textrm{ in } \Omega_t,\quad \\
		\partial_t\zeta+\nabla_X\zeta\cdot\nabla_X\phi=\partial_y\phi,&\quad \textrm{ for } y=\zeta, \quad \\
		\partial_t\phi+\frac{1}{2}\left(\left|\nabla_X\phi\right|^2+\left(\partial_y\phi\right)^2\right)+g\zeta=0,&\quad \textrm{ for } y=\zeta, \quad \\
		\nabla_Xb\cdot\nabla_X\phi-\partial_y\phi=0,&\quad \textrm{ for } y=b.\quad 
	\end{array}
	\right.
\end{equation}
Here, $g$ stands for gravitational acceleration and $\phi$ is the velocity potential related to fluid velocity $U$ through the relation $U=\nabla\phi$.
Throughout this paper, we set 
$$\nabla=\begin{pmatrix}
\nabla_X \\[2pt]
\partial_y 
\end{pmatrix}\quad\text{ and }\quad \Delta=\Delta_X+\partial_y^2.$$
Moreover, we adopt the hypotheses outlined in \cite{lannes2013water}.
Additionally, we exclude the case of water at rest, that is, we assume that the velocity $U$ is not zero (a.e.).  

The system (\ref{pd0}) can be reformulated to exclusively characterize the behavior on the free surface \cite{zakharov1968stability}. Following \cite{craig1993numerical} and using  the Dirichlet to Neumann operator (DNO), we obtain the following system: 
\begin{equation}\label{pd1}
\left\{
\begin{array}{ll}
\partial_t\zeta =G(\zeta,b)\psi ,\quad \\
		\partial_t\psi=-g\zeta-\frac{1}{2}\left|\nabla_X\psi\right|^2+\dfrac{\left(     G(\zeta,b)\psi+\nabla_X\zeta\cdot\nabla_X\psi\right)^2}{2\left(1+|\nabla_X\zeta|^2\right)}, \quad \\
		\zeta(0,X)=\zeta_0(X),\;\psi(0,X)=\psi_0(X),\quad\\
\end{array}
\right.
\end{equation}
where $\psi(t,X)=\phi(t,X,\zeta(t,X))$ is the trace of the velocity potential at the free surface $\zeta$, and  $G$ is the so-called \textit{Dirichlet to Neumann operator}, defined by 
\begin{equation}\label{Gthe DNO}
	\psi\longrightarrow G(\zeta,b)\psi=\sqrt{1+|\nabla_X\zeta|^2}\;\partial_n\phi_{|_{y=\zeta}},
\end{equation}
where $\phi$ is the solution to the elliptic system 
\begin{equation}\label{pd2}
	\left\{
	\begin{array}{ll}
		\Delta\phi =0,\hspace{1.01cm} \textrm{ in }  \Omega_t,\\
		\phi_{|_{y=\zeta}}=\psi,\;\partial_n\phi_{|_{y=b}} =0.\quad\\
	\end{array}
	\right.
\end{equation}
For the well-posedness of (\ref{pd1})-(\ref{pd2}) in different Sobolev spaces, we refer the reader to \cite{alazard2016cauchy,lannes2013water}.

The inverse problem associated with \eqref{pd1}-\eqref{pd2} consists of recovering the bottom profile $b$ from measurements taken at the water surface $\zeta$. In \cite{nicholls2008joint}, the problem is treated numerically. However, the approach requires standing wave profiles and thus has limited applicability in real-world settings \cite{nicholls2009detection}. In \cite{vasan2013inverse}, the standing wave assumption is removed by imposing a periodic velocity potential and small-amplitude waves. Recently, in \cite{fontelos2017bottom}, the authors established mathematical uniqueness for this inverse problem without restrictions on velocity or wave type. Precisely, given two different bottoms $b$ and $b_0$ in $ C^5(\mathbb{R}^d)$  and their corresponding surface elevations  $\zeta,\, \zeta_0$ and velocity potentials  $\phi_,\, \phi_0$, the authors proved that, if for some nonempty open set $S$ in $\mathbb{R}^d$ and for some  time $t_0$ we have  $$\zeta(t_0,X)=\zeta_0(t_0,X)\neq 0,\;\; \psi(t_0,X)=\psi_0(t_0,X)\neq 0\;\;\text{and }\;\; \partial_t\zeta(t_0,X)=\partial_t\zeta_0(t_0,X),\quad \forall X\in S,$$
then 
$$b(X)=b_0(X),\quad \forall X\in \mathbb{R}^d.$$
However, the preceding result relies on an unbounded fluid-domain setting (or, equivalently, on a bounded fluid domain with solid, impermeable vertical walls) and on a smooth bathymetry (see \cite[Section 2.2]{fontelos2017bottom}). In \cite{lecaros2020stability}, the stability is addressed. However, due to geometric complexity, the authors restrict the stability analysis to the case $\zeta(t_0,X) = \zeta_0(t_0,X)$. Moreover, in the region between the bottoms, the study assumes that the set of intersection points between $b$ and $b_0$ is (at most) finite. Furthermore, the results require a bounded $C^{1,1}$ domain with solid (impermeable) walls.

To treat both bounded and unbounded fluid domains, we consider the inverse problem of detecting the seafloor over an arbitrary bounded open set $\mathcal O \subset \mathbb{R}^d$ with boundary $C^1$. The choice of a bounded truncation of the fluid domain is motivated by our interest in establishing a stability estimate using results for elliptic systems that are available (to the best of our knowledge) only for bounded domains. Moreover, this truncation is essential for the numerical implementation that we intend to develop in future work; see Section \ref{sec:conclusions}. Specifically, adopting the surface measurements of \cite{fontelos2017bottom, lecaros2020stability}, we consider the $(d+1)$-dimensional water–waves system \eqref{pd1}–\eqref{pd2} for $d\in\{1,2\}$ on an unbounded fluid domain $\Omega_t$, and we study identifiability and stability for recovering an arbitrary bounded portion of the bottom $b$ using measurements collected only above it. The unbounded fluid domain setting ensures that the well-posedness difficulties associated with \eqref{pd1}–\eqref{pd2} \cite{benjamin1979gravity} are avoided. 
Moreover, in contrast to \cite{lecaros2020stability}, we do not confine our analysis to the situation in which the normal derivative of the velocity potential vanishes on vertical walls, since in our setting such walls are not necessarily impermeable.

The primary objective of this paper is to prove stability and uniqueness under minimal assumptions. Specifically, we aim to prove identifiability for the geometric inverse problem on the truncated domain and show that we can recover the bottom profile without using inlet or outlet data for the velocity potential. Moreover, we aim to derive stability estimates without several conservative assumptions used in \cite{lecaros2020stability}. More precisely, we do not assume:
\begin{enumerate}[label=(\roman*)]
  \item a homogeneous Neumann condition for the velocity potential on the vertical walls (e.g., on $\Gamma_b^{\zeta}$ in Figure~\ref{fig:myplot1});
  \item  that the two free surfaces coincide at some time $t_0$ $(\zeta(t_0,\cdot)=\zeta_0(t_0,\cdot))$;
  \item that the two bottoms intersect only at finitely many points;
  \item that the truncated domain is of class $C^{1,1}$. (Instead, we assume only that the domain has a Lipschitz boundary, which is the optimal regularity in the presence of vertical boundaries, even when the free surface and the bottom are smooth.);
  \item that the region between the bottoms satisfies the fatness condition. Rather, we adopt a relaxed local fatness condition that allows for infinitely many intersections between the bottoms; see Hypothesis \ref{hyp-of-paper}. In particular, this illustrates the possibility of using the size estimates method to detect a countably infinite disjoint union of subdomains.
\end{enumerate}

\subsection{Main results}
Consider an arbitrary bounded open set $\mathcal O \subset \mathbb R^d$ ($d\in\{1,2\}$) with boundary $C^1$, corresponding to the bottom region we intend to detect
(see Figure \ref{fig:myplot1}). Let us fix an instant of measurement $t_0$. We define the following operator
\begin{equation}\label{operatorofbottom}
	\Lambda_{t_0}:\;\; b \longmapsto\left( \left(\zeta, \partial_n\phi_{|_{y=\zeta}}, \psi\right)|_{\{t_0\}\times \mathcal O} ,b_{\vert \partial \mathcal O}\right),
\end{equation}
where $\phi\in H^2(\Omega_{t_0})$  is the solution to the elliptic system (\ref{pd2}). Then our main contributions can be summarized as follows:

\begin{enumerate}
	\item  The operator $\Lambda_{t_0}$ is locally one-to-one; that is, the bottom geometry on $\mathcal O$ can be identified from the given measurements.  (See Theorem \ref{uniqtheorem} below.) This result extends \cite{fontelos2017bottom} to the case of a truncated fluid domain and a $C^1$ bottom profile. 
	
	\item We establish $L^{1}$ log--log and logarithmic stability estimates for the inverse problem under consideration,  dispensing with the hypotheses of \cite{lecaros2020stability} and assuming only the local fatness Hypothesis~\ref{hyp-of-paper}.  (See Theorem~\ref{stabilitytheorem} and Remark~\ref{possiblelogstability}).

	\item When the fluid domain is bounded (solid walls), as in \cite{lecaros2020stability}, or when the bottom profile is flat except on a compact set, our analysis suggests that the knowledge of $b(\partial \mathcal O)$ is not needed.
\end{enumerate}

\subsection{Outline of the paper}
In Section \ref{sec:backand-Problm}, we provide the necessary notations, definitions, and preliminary results crucial for our analysis. We start by formally presenting the inverse problem along with the paper notations. Afterwards, we collect from the literature some results on elliptic problems in Lipschitz domains. In Section \ref{sec:uniqueness},  we address uniqueness. After establishing a pivotal result (Lemma \ref{Lemma1useful}), we employ the unique continuation and the Lipschitz propagation of smallness for elliptic problems to prove the identifiability result for bottom detection (Theorem \ref{uniqtheorem}).
In Section \ref{sec:stability},  we prove the main result of the paper (Theorem~\ref{stabilitytheorem}) providing stability estimates. To do it, we 
apply classical Sobolev embeddings together with some results from Section \ref{sec:uniqueness} to obtain an upper bound for the inter-bottom energy, expressed in terms of the differences between the selected surface measurements. Subsequently, we use a standard technique from the size estimates method to bound this energy from below by the measure of the region between the bottoms.
To enable this technique, we relax the classical fatness condition to permit infinitely many intersections between the bottoms; see Hypothesis \ref{hyp-of-paper}. For ease of reading, all the proofs of this section that require direct (but cumbersome) computations are shifted to Appendix~\ref{A.1}--\ref{A.2}. Finally,
Section \ref{sec:conclusions} summarizes the results, provides some further comments, and discusses future work.

\section{Problem setting and preliminaries}\label{sec:backand-Problm}
In this section, we present the notation, definitions, and results needed for the analysis in the subsequent sections. We first formulate the paper inverse problem, and then review well-established results for elliptic problems on Lipschitz domains.

\subsection{The geometric inverse problem}\label{sec:introinverseproblem}
We consider the recovery of a bounded part of the seafloor from surface measurements taken above it. Let \(\mathcal O\) be an arbitrary bounded open subset of \(\mathbb{R}^d\) ($d\in \{1,2\}$) such that \(\partial\mathcal O \in C^{1}\). Throughout the paper, we refer to $\Omega_f^g$ as the domain defined by
\begin{equation}\label{domainrefdef}
    \Omega_f^g=\left\{(X,y)\in \mathcal O \times\mathbb{R},\; f(X)<y<g(X)\right\},
\end{equation}
where $f,g\in C^{0,1}(\overline{\mathcal O})$ are such that  $f<g$ on $\overline{\mathcal O }$. Additionally, we separate the boundary of $\Omega_f^g$ into three parts as follows
$$\Gamma^g=\left\{ (X,y)\in \mathcal O \times\mathbb{R},\;y=g(X)\right\},\;\Gamma^f=\left\{ (X,y)\in \mathcal O \times\mathbb{R},\;y=f(X)\right\},$$
and 
$$\Gamma^{g}_f=  \{ (X,y)\in \partial \mathcal O \times\mathbb{R},\ f(X)<y<g(X) \} .\;$$
From  (\ref{pd1})-(\ref{pd2}) and using the above notations, we assume that at a given time $t_0$ we have access to the surface measurements $(\zeta, \partial_n\phi_{|_{\Gamma^{\zeta}}}, \psi)|_{\{t_0\}\times \mathcal O }$ along $\Gamma^{\zeta}$  within  $\mathcal O $. Moreover, we assume that the values of the bottom at the boundary points are known, namely $b(\partial \mathcal O )$ (see Figure \ref{fig:myplot1}, where $b(\partial \mathcal O )=\{b(a_1),b(a_2)\}$).

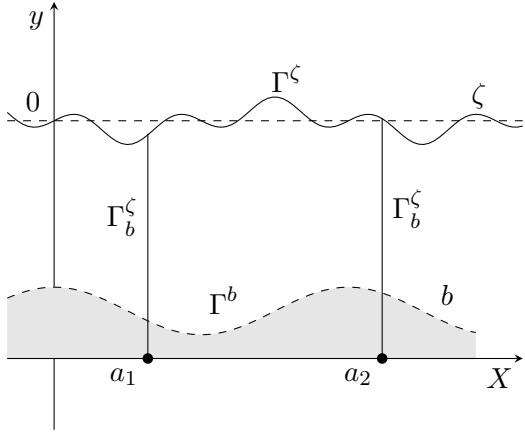
\begin{figure}
	\centering
	\begin{tikzpicture}
		\begin{axis}[
			axis lines = middle,
			xlabel = $X$,
			ylabel = $y$,
			xtick =  \empty,
			ytick = \empty,
			xmin = -1,
			xmax = 10,
			ymin = -3, 
			ymax = 15, 
			no markers,
			grid = none,
			every axis x label/.style={at={(current axis.right of origin)},anchor=north east},
			every axis y label/.style={at={(current axis.above origin)},anchor=north east},
			]
			
			\addplot[black, domain=-1:10, samples=100] {sin(deg(x))*cos(deg(x*2))+10} node[pos=0.9, anchor=south west] {$\zeta $} node[pos=0.5, anchor=south west] {$\Gamma^{\zeta} $};
			\addplot[black, domain=-1:9, samples=100, name path=A] {0} ;
			
			\addplot[dashed, domain=-1:9, samples=100, name path=B] {cos(deg(x))+2} node[pos=0.9, anchor=south west] {$b $} node[pos=0.5, anchor=south east] {$\Gamma^{b} $};
			
			\draw[black] (axis cs:2,0) -- (axis cs:2,9.45) node[pos=0.5, anchor=south east] {$\Gamma^\zeta_b $} ;
			\draw[black] (axis cs:7,0) -- (axis cs:7,10.1) node[pos=0.5, anchor=south west] {$\Gamma^\zeta_b  $};
			
			\fill (axis cs:2,0) circle[radius=2pt] node[below left] {$a_1$};
			\fill (axis cs:7,0) circle[radius=2pt] node[below left] {$a_2$};
			
			\draw[dashed] (axis cs:-1,10) -- (axis cs:10,10)  node[pos=0.05, above] {0};
			
			
			
			\addplot[gray!20] fill between[of=A and B];
			
		\end{axis}
	\end{tikzpicture}
	\caption{ Scheme for the inverse problem when $\mathcal O=(a_1,a_2)\subset \mathbb R$ and $d=1$.}
	\label{fig:myplot1}
\end{figure}

As pointed out in the Introduction, this work aims to demonstrate the local injectivity and stability of the operator $\Lambda _{t_0}$ given by \eqref{operatorofbottom}. It is important to mentioned that using the first equation in (\ref{pd1}), we can replace the measurements $\left(\zeta, \partial_n\phi_{|_{\Gamma^{\zeta}}}, \psi\right)|_{\{t_0\}\times \mathcal O }$ taken at an instant $t_0$ by $\left(\zeta|_{[0,T ) \times \mathcal O },\psi|_{\{t_0\}\times \mathcal O }\right)$, where $t_0$ is in the interval $(0,T)$. On the other hand, it is clear that in comparison to the works \cite{fontelos2017bottom,lecaros2020stability}, we have included additional measurements, namely $b(X)$ for $X\in \partial \mathcal O$. However, in the upcoming sections, it will be clear that if we consider a bounded fluid domain as in  \cite{lecaros2020stability}, these supplementary measurements are not required. Moreover, working on a bounded truncated domain, where the boundary information $b|_{\partial\mathcal O}$ is used instead of inlet and outlet velocity data for the inverse problem, is numerically more practical than considering an unbounded fluid setting in \cite{fontelos2017bottom}. Furthermore, we emphasize that boundary information on the bathymetry is commonly used as an input for this geometric inverse problem when using the shallow water equations (see e.g. \cite{angel2024bathymetry,gessese2011reconstruction,meandCarole}.)

Without loss of generality and to ensure $H^2$-regularity for the velocity potential, we suppose that the fluid domain is unbounded in the horizontal direction $\mathbb{R}^{d}$, where $d=1,2.$

Fix $d_0>\frac{d}{2}$, and let $b,b_0,\zeta,\zeta_0 \in H^{d_0+1}(\mathbb{R}^d)$ and $\psi,\psi_0 \in \dot{H}^{3/2}(\mathbb{R}^d)$, where
$$\dot{H}^{3/2}(\mathbb{R} ^d)=\left\{  \psi\in L^2_{loc}(\mathbb{R} ^d),\quad\nabla_X\psi\in H^{1/2}(\mathbb{R} ^d)^{d} \right\}.$$
We assume that the depth of the water is always bounded from below by a positive constant (see $(\text{H}9)$ in \cite{lannes2013water}); that is,  there exists a constant $H_0>0$ such that 
\begin{equation}\label{conditionofliquid}
	\zeta_0(X)-b_0(X)\geq H_0\;\;\text{and }\;\; \zeta(X)-b(X)\geq H_0,\quad \forall X\in \mathbb{R} ^d. 
\end{equation}
It was proved in \cite{lannes2013water} (see also \cite[Lemma 2.5]{fontelos2017bottom}) that system (\ref{pd2}), for the chosen time $t_0$, has a unique solution $\phi$ in $H^2(\Omega _{t_0})$. Let $\theta :=\phi _{\vert  \Gamma ^\zeta _b }$.  Then  the following system

\begin{equation}\label{pd3}
	\left\{
	\begin{array}{rrrrr}
		\vspace{0.1cm}
		\Delta\phi &=&0,&\quad \text{on}\; \; \Omega_{b}^{\zeta},\\
		\vspace{0.1cm}
		\phi &=&\psi,&\quad \text{in}\;\;  \Gamma^{\zeta},\\
		\partial_n\phi &=&0,&\quad \text{in}\;\;\Gamma^{b},\\
		\phi &=&\theta ,&\quad \text{in}\;\;  \Gamma^{\zeta}_b,\\
	\end{array}
	\right.
\end{equation}
\vspace{0.2cm}
has a unique solution, also denoted by $\phi$, in $H^2(\Omega_b^{\zeta}).$

Replacing $b$, $\zeta$, $\psi$ and $\theta $ by  $b_0$, $\zeta_0,$ $\psi_0$, and $\theta _0 :={\phi _0}_{\vert \Gamma^{\zeta_0} _{b_0}}$, respectively, and following similar arguments, the system

\begin{equation}\label{pd4}
	\left\{
	\begin{array}{rrrrr}
		\vspace{0.1cm}
		\Delta\phi_0 &=&0,&\quad \text{on}\; \; \Omega_{b_0}^{\zeta_0},\\
		\vspace{0.1cm}
		\phi_0 &=&\psi_0,&\quad \text{in}\;\;  \Gamma^{\zeta_0},\\
		\partial_n\phi_0 &=&0,&\quad \text{in}\;\;\Gamma^{b_0},\\
		\phi _0 &=&\theta _0 ,&\quad \text{in}\;\;  \Gamma^{\zeta_0}_{b_0},\\
	\end{array}
	\right.
\end{equation}
has a  unique solution $\phi_0 \in H^2(\Omega_{b_0}^{\zeta_0}).$

We conclude this subsection with the following remarks.
\begin{remark}\label{notationt0}
In condition \eqref{conditionofliquid} and in systems \eqref{pd3}–\eqref{pd4}, the quantities depend on the selected measurement time $t_0$. For simplicity, we omit this dependence. Throughout the paper,  all terms refer to the instant $t_0$,  e.g.
$$\zeta(X):=\zeta(t_0,X),\;\; \psi(X):=\psi(t_0,X).$$

\end{remark}
\begin{remark}\label{Assumption of paper}
    By Sobolev embedding, since $b, b_0, \zeta,$ and $\zeta_0 \in H^{d_0+1}(\mathbb{R}^d)$ with $d_0>\frac{d}{2}$ and $d\in\{1,2\}$, it follows that $b, b_0, \zeta,$ and $\zeta_0 \in C^1(\overline{\mathcal{O}})$. Throughout this paper, we work in the following  general setting:
\begin{enumerate}
  \item $b, b_0, \zeta,$ and $\zeta_0 \in C^1(\overline{\mathcal{O}})$;
  \item $\phi \in H^2(\Omega_b^{\zeta})$ and $\phi_0 \in  H^2(\Omega_{b_0}^{\zeta_0})$ are solutions to \eqref{pd3} and \eqref{pd4}, respectively;
  \item Condition~\eqref{conditionofliquid} holds.
\end{enumerate}

\end{remark}

\subsection{Preliminaries on Elliptic problems}
In this part, we gather from the literature some well-established results on elliptic problems, including Lipschitz propagation of smallness, log-log stability, and logarithmic stability. We begin with the following definition.

\begin{definition}[\cite{alessandrini2003size}, Lipschitz regularity]\label{lipsdef}
	Let $\Omega$ be a bounded open set  in $\mathbb{R}^{d+1}$, $d\ge 1$. We  say that $\partial \Omega$ is in the Lipschitz class with constants $r_0,\;M_0$ if, for any $(X_0,y_0)\in\partial \Omega$, there exists a rigid transformation (i.e. an isometry) of coordinates under which we have $(X_0,y_0)=0$ and 
	$$\Omega\cap B^{d+1}_{r_0}(0)=\{ (X,y) \in B^{d+1} _{r_0}(0),\;\;y>\gamma(X)\},$$
	where $B^{d+1}_{r_0}(0) : = \{ (X,y)\in {\mathbb R} ^{d+1},\  ||(X,y)||<r_0\}$ and $\gamma$ is a Lipschitz continuous function on $B_{r_0}^{d}(0)$, satisfying
	$$\gamma(0)=0\quad \text{and}\quad ||\gamma||_{C^{0,1}(B^{d}_{r_0} (0)) }\leq M_0 r_0.$$
	
\end{definition}

\begin{remarks}\label{remarks-on-the-Lipshitz-domain}$\quad$\\
	\begin{itemize}
		\item Observing the above Definition \ref{lipsdef}, it becomes apparent that the parameter $r_0$ applies uniformly across all boundary points. It is expected that $r_0$ be small and $M_0$ be large. 
		\item Let $f,\;g \in C^{0,1}(\overline{\mathcal O})$ such that there exists a constant $H_0>0$ with $f-g\geq H_0$
    on $\overline{\mathcal O}$. Then it follows from \cite{henrot2005variation}  that the domain $\Omega_f^g$ given by \eqref{domainrefdef} possesses the Lipschitz regularity depicted in Definition \ref{lipsdef}.
	\end{itemize}
\end{remarks}
Let $\Omega$ be a bounded  Lipschitz domain as in  Definition \ref{lipsdef}, and consider the following  elliptic system
\begin{equation}\label{Nuemannproblem}
	\left\{
	\begin{array}{rrrrr}
		\Delta u &=0\;,&\text{on}\; \; \Omega,\\
		\partial_n u &=v\;,&\quad \text{in}\;\;  \partial\Omega.\\
	\end{array}
	\right.
\end{equation}
In the next  result, we collect some well-known results concerning elliptic systems on Lipschitz domains. For details, we refer to \cite{kenig1994harmonic}; see also \cite[Proof of Lemma 4.5]{alessandrini2003size} and the references therein.

\begin{theorem}\label{usfulequalities}
	Let $\Omega$ be a bounded Lipschitz domain in $\mathbb{R}^{d+1}$ ($d\ge 1 $)  with constants  $r_0$ and $M_0$ according to  Definition \ref{lipsdef}, and let $u\in H^{1}(\Omega)$ be a solution of (\ref{Nuemannproblem}) such that  $v \in L^2(\partial\Omega)$. Then $u\in H^1(\partial \Omega)$ and  there exist two positive constants $C_1$ and $C_2$ that depend only on $r_0$, $M_0$, and $\mu_{d+1} (\Omega)$ such that
	\begin{equation}\label{inequalityimportnat}
		\left\Vert  v\right\Vert_{L^2(\partial\Omega)}\leq C_1  \left\Vert   u \right\Vert_{H^1(\partial\Omega)}\;\; \text{and}\;\; \left\Vert \nabla u \right\Vert_{L^2(\partial\Omega)}\leq C_2  \left\Vert  v \right\Vert_{L^2(\partial\Omega)},\;\;
	\end{equation}
	where $\mu_{d+1} $ denotes the  Lebesgue measure in $\mathbb R ^{d+1}$.
\end{theorem}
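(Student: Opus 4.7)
The plan is to invoke the classical $L^2$ theory of the Dirichlet and Neumann problems for harmonic functions on bounded Lipschitz domains (Dahlberg, Jerison--Kenig, Verchota, as presented in Kenig's monograph), and then keep track of how the constants scale with the Lipschitz parameters $r_0$, $M_0$ and with the volume $\mu_{d+1}(\Omega)$.

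First, I would reduce to a normalized setting. Since $u$ is harmonic in $\Omega$ and $\partial_n u = v$ on $\partial\Omega$, the compatibility condition $\int_{\partial\Omega} v\,d\sigma = 0$ is automatic; moreover, replacing $u$ by $u-\fint_{\partial\Omega} u\,d\sigma$ does not affect $\nabla u$ or $v$. Thus the solution is determined up to an additive constant, which is exactly the uniqueness framework of Verchota's theorem for the Neumann problem.

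For the second inequality, I would apply Verchota's solvability theorem for the $L^2$ Neumann problem: for any mean-zero datum $v\in L^2(\partial\Omega)$ there is a harmonic function (unique up to constants) whose non-tangential maximal function of $\nabla u$ lies in $L^2(\partial\Omega)$, with
\begin{equation*}
\|\nabla u\|_{L^2(\partial\Omega)} \leq C\,\|v\|_{L^2(\partial\Omega)}.
\end{equation*}
The proof uses layer potentials and the Rellich identity on Lipschitz graphs, and the resulting constant depends only on the Lipschitz characteristics. Since the solution of our problem (\ref{Nuemannproblem}) coincides with Verchota's up to a constant, we get the claimed bound with some $C_2$. For the first inequality, I would apply the companion $H^1$-regularity theory for the Dirichlet problem (Jerison--Kenig--Verchota): if the trace $u|_{\partial\Omega}$ lies in $H^1(\partial\Omega)$, then the harmonic extension enjoys $\nabla u\in L^2(\partial\Omega)$ in the non-tangential sense, with $\|\nabla u\|_{L^2(\partial\Omega)}\leq C\|u\|_{H^1(\partial\Omega)}$. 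Applying this to our $u$, whose Dirichlet trace is automatically in $H^1(\partial\Omega)$ once we know $\nabla u|_{\partial\Omega}\in L^2$, furnishes the bound on $v=\partial_n u$.

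The main obstacle, and the only nontrivial bookkeeping, will be the explicit dependence of $C_1$, $C_2$ on $(r_0, M_0, \mu_{d+1}(\Omega))$ rather than on the full geometry. The standard way to do this is a scaling and covering argument: cover $\partial\Omega$ by a controlled number $N\lesssim \mu_{d+1}(\Omega)/r_0^{d+1}$ of boundary balls $B_{r_0/2}^{d+1}(X_i,y_i)$, in each of which $\Omega$ is, after rigid motion, the epigraph of a Lipschitz function with constant $\leq M_0$; on each patch the Kenig/Verchota estimates yield constants depending only on $M_0$; a subordinate partition of unity together with Caccioppoli-type interior estimates then glues the local bounds into a global estimate whose constants depend only on $r_0$, $M_0$ and $\mu_{d+1}(\Omega)$, as stated. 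This is precisely the computation sketched in \cite[proof of Lemma~4.5]{alessandrini2003size}, which we follow.
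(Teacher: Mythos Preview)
Your proposal is correct and matches the paper's treatment: the paper does not prove this theorem but simply cites it from the literature, referring to Kenig's monograph \cite{kenig1994harmonic} and to \cite[Proof of Lemma~4.5]{alessandrini2003size} for the explicit dependence of the constants on $r_0$, $M_0$, and $\mu_{d+1}(\Omega)$---exactly the sources and argument you invoke.
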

The following result concerns the Lipschitz propagation of smallness \cite[Lemma~2.2 and the remark p.~63]{alessandrini2000optimal}. A variant of the theorem employing a different \textit{frequency} characterization of $v$ \eqref{Nuemannproblem} can be found in \cite{alessandrini2003size}.

\begin{theorem}[\cite{alessandrini2000optimal}, Lipschitz propagation of smallness]\label{propagationsmallnes}
	Let $\Omega$ be a bounded Lipschitz domain in $\mathbb{R}^{d+1}$ ($d\ge 1$) with constants $r_0$ and $M_0$ according to  Definition \ref{lipsdef}, and let $u\in H^{1}(\Omega)$ be a solution of (\ref{Nuemannproblem}) with $v\in L^2(\partial \Omega)$. Then, for every $\rho>0$ and for every $(X,y)\in\Omega_{4\rho}$, where 
	$$\Omega_{4\rho}=\{ (X,y)\in \Omega:\;\; d((X,y),\partial \Omega)>4\rho\},$$
	we have
	\begin{equation}\label{equaLipschprop}
		\displaystyle\int_{B^{d+1}_{\rho}((X,y))}|\nabla u|^2\geq C_{\rho} \displaystyle\int_{\Omega} |\nabla u|^2.
	\end{equation}
	Here, $C_{\rho}$ depends only on $\mu_{d+1}(\Omega)$, $r_0$, $M_0$, $\rho$ and $\frac{||v ||_{L^{2}(\partial\Omega)}}{||v ||_{H^{-1/2}(\partial\Omega)}}.$   
\end{theorem}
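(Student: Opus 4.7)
The plan is to deduce the estimate from the classical three-spheres inequality for harmonic functions combined with a chain-of-balls argument, using the ratio $\|v\|_{L^2(\partial\Omega)}/\|v\|_{H^{-1/2}(\partial\Omega)}$ as a quantitative nondegeneracy parameter in the spirit of Almgren's frequency function. This is the pattern underlying all results of this type in the Alessandrini--Morassi--Rosset line of work.

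First, I would invoke the (scale-invariant) three-spheres inequality: if $u$ is harmonic on $B_{R}(z)\subset\Omega$ and $0<r_1<r_2<r_3\le R$, then
$$\int_{B_{r_2}(z)}|\nabla u|^2 \;\le\; C\left(\int_{B_{r_1}(z)}|\nabla u|^2\right)^{\!\theta}\left(\int_{B_{r_3}(z)}|\nabla u|^2\right)^{\!1-\theta},$$
for some $\theta\in(0,1)$ and $C>0$ depending only on the ratios $r_i/r_{i+1}$. This is the usual ingredient produced either by a Carleman estimate or by monotonicity of Almgren's frequency.

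Second, I would establish a \emph{reference ball} estimate: there exist $\rho_0>0$ and a point $z_0\in\Omega$ with $\mathrm{dist}(z_0,\partial\Omega)\ge 4\rho_0$ such that
$$\int_{B_{\rho_0}(z_0)}|\nabla u|^2 \;\ge\; c_0 \int_{\Omega}|\nabla u|^2,$$
with $c_0>0$ depending only on $r_0$, $M_0$, $\mu_{d+1}(\Omega)$ and on $\|v\|_{L^2(\partial\Omega)}/\|v\|_{H^{-1/2}(\partial\Omega)}$. The role of this ratio is to prevent the Neumann datum from being concentrated in highly oscillatory modes: were $u$ to have negligible energy on every interior ball of a fixed size, a duality argument combined with elliptic regularity and interior--boundary estimates would force $\|v\|_{H^{-1/2}}$ to be comparable to $\|v\|_{L^2}$ up to a small error, contradicting the ratio bound. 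A boundary version of the three-spheres inequality on Lipschitz domains, obtained via local flattening of $\partial\Omega$, is needed here to make this rigorous.

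Third, with $z_0$ and $\rho_0$ in hand, I would connect $z_0$ to an arbitrary $(X,y)\in\Omega_{4\rho}$ by a Harnack chain of overlapping balls of radius comparable to $\min(\rho,\rho_0)$ contained in $\Omega$, with length bounded in terms of $r_0,M_0,\mu_{d+1}(\Omega),\rho$. Applying the three-spheres inequality at each link and iterating yields
$$\int_{B_\rho(X,y)}|\nabla u|^2 \;\ge\; C_\rho\int_\Omega|\nabla u|^2,$$
with $C_\rho$ of the form $c_0^{\,\theta^N}$ for the chain length $N$, hence depending on the parameters exactly as stated. The main obstacle is the second step: extracting the reference ball from the frequency-type ratio in a Lipschitz (rather than smooth) domain, where Neumann traces have to be analyzed without $C^{1,\alpha}$ regularity and the boundary three-spheres inequality must be set up with constants depending only on the Lipschitz parameters $r_0,M_0$.
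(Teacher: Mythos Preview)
The paper does not prove this theorem: it is quoted as a preliminary result from \cite{alessandrini2000optimal} (Lemma~2.2 and the remark on p.~63 there), so there is no ``paper's own proof'' to compare against. Your sketch is broadly in the spirit of the Alessandrini--Rosset--Seo argument (three-spheres inequality, chain of balls, frequency-type control through the ratio $\|v\|_{L^2}/\|v\|_{H^{-1/2}}$), so at the level of strategy you are on the right track.

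One point worth flagging: your Step~2, where you first locate a single ``reference ball'' carrying a fixed fraction of the total energy and then chain outward from it, is not how the original proof is organized. In \cite{alessandrini2000optimal} the argument runs more directly through a boundary doubling inequality on Lipschitz domains (this is where the Lipschitz constants $r_0,M_0$ enter) combined with the interior three-spheres inequality; the frequency ratio controls the doubling constant, and the chain argument then yields the lower bound for every ball in $\Omega_{4\rho}$ without isolating a reference ball beforehand. Your contradiction sketch in Step~2 (``if every interior ball had small energy then a duality argument forces $\|v\|_{H^{-1/2}}\sim\|v\|_{L^2}$'') is plausible heuristically but is not a self-contained argument as written, and making it rigorous would essentially require proving the boundary doubling inequality anyway. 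So the real work you have correctly identified---boundary three-spheres/doubling on Lipschitz domains with constants depending only on $r_0,M_0$---is exactly the substance of the cited reference, and your sketch would be cleaner if you replaced Step~2 by that ingredient directly.
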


We end this section by introducing stability results for ill-posed
elliptic Cauchy problems in the case of Lipschitz
domains. 
\begin{theorem} [\cite{choulli2020new}, log-log stability]\label{loglogstability}
	Let $\Omega$ be a bounded Lipschitz domain in  $\mathbb{R}^{d+1}$  ($d\ge 1$) according to  Definition \ref{lipsdef}, 
	let $\Gamma^0$ be a nonempty open subset of $\partial \Omega$ and let $s\in(0,1/2)$. 
	Then there exist some constants $c>e$ and $C>0$ 
	depending on $\Omega$, $\Gamma _0$, and $s$ such that for any $u\in H^2(\Omega)$ with $\Delta u=0$, $u\ne 0$ and 
	$$ \left\Vert u \right\Vert_{L^2(\Gamma^0)}+\left\Vert \nabla u \right\Vert_{L^2(\Gamma^0)}\leq \frac{\left\Vert u \right\Vert_{H^2(\Omega)}}{2\,c},$$ 
	we have 
	$$||u||_{H^1(\Omega)}\leq C \left\Vert u \right\Vert_{H^2(\Omega)} \left[  \ln\ln\left(   \dfrac{\left\Vert u \right\Vert_{H^2(\Omega)}}{||u||_{L^2(\Gamma^0)}+||\nabla u||_{L^2(\Gamma^0)}}     \right)      \right]^{-s/2}.
	$$
\end{theorem}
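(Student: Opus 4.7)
The plan is to combine three standard ingredients for elliptic Cauchy problems in non-smooth domains: an interior propagation of smallness based on iterated three-balls inequalities for harmonic functions, a Sobolev interpolation in a boundary strip $\Omega\setminus\Omega_\rho$ with $\Omega_\rho:=\{x\in\Omega:\ d(x,\partial\Omega)>\rho\}$, and an optimization over the strip thickness $\rho$. The \emph{double} logarithm is produced by the fact that, in a merely Lipschitz domain, the smallness exponent obtained by propagation up to $\partial\Omega$ degrades doubly-exponentially as $\rho\downarrow 0$, whereas on a smooth boundary this exponent would only decay polynomially in $\rho$ and single-log stability would result.

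The first step bootstraps the Cauchy data on $\Gamma^0$ into interior smallness. A Caccioppoli-type estimate combined with an odd/even reflection of $u$ across a flattened piece of $\Gamma^0$ in a local chart produces a ball $B_*\subset\Omega$, of fixed radius and fixed distance from $\partial\Omega$, on which $\|u\|_{H^1(B_*)}\le C\,\varepsilon$, where $\varepsilon:=\|u\|_{L^2(\Gamma^0)}+\|\nabla u\|_{L^2(\Gamma^0)}$. From $B_*$ I would then chain Hadamard's three-balls inequality $\|u\|_{L^2(B_{2r})}\le C\,\|u\|_{L^2(B_r)}^{\theta}\,\|u\|_{L^2(B_{4r})}^{1-\theta}$ along a path of balls of radius $\sim\rho$ joining $B_*$ to an arbitrary point of $\Omega_\rho$. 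The quantitative content, dictated by the Lipschitz constants $(r_0,M_0)$, is that such a chain requires $\sim\rho^{-a}$ balls for some $a>0$, and iterating the three-balls inequality compounds the exponents into
\[
\|u\|_{H^1(\Omega_\rho)}\le C\,\|u\|_{H^2(\Omega)}^{\,1-\tau(\rho)}\,\varepsilon^{\tau(\rho)},\qquad \tau(\rho)=\exp(-C\rho^{-a}).
\]

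The boundary layer is then controlled by a tubular Hardy/Sobolev inequality valid in Lipschitz domains: $\|u\|_{H^1(\Omega\setminus\Omega_\rho)}\le C\,\rho^{\sigma}\,\|u\|_{H^2(\Omega)}$ for any fixed $\sigma\in(0,1/2)$. Summing the two contributions and setting $R:=\log(\|u\|_{H^2(\Omega)}/\varepsilon)$, which exceeds $\log(2c)>1$ by the smallness hypothesis, I would optimize $\rho$ by balancing $e^{-\tau(\rho)R}\sim\rho^{\sigma}$; this forces $\rho\sim(\log R)^{-1/a}$ (up to logarithmic corrections) and yields $\|u\|_{H^1(\Omega)}\le C\,\|u\|_{H^2(\Omega)}\,(\log R)^{-\sigma/a}$. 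Since $\log R=\ln\ln(\|u\|_{H^2(\Omega)}/\varepsilon)$, tuning $\sigma/a=s/2$ (which restricts $s$ to $(0,1/2)$) produces the announced rate $[\ln\ln(\|u\|_{H^2(\Omega)}/\varepsilon)]^{-s/2}$.

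The main obstacle is the quantitative propagation of smallness up to a Lipschitz boundary: a Lipschitz straightening of $\partial\Omega$ cannot be exploited to compress the chain to $O(\log(1/\rho))$ three-balls steps near the boundary (as it could for a $C^{1,1}$ or smoother boundary), and it is precisely the polynomial-in-$1/\rho$ chain length — and the resulting doubly-exponentially small $\tau(\rho)$ — that is responsible for the log–log rather than log rate. Keeping every constant uniform and depending only on $(r_0,M_0,\mu_{d+1}(\Omega),\Gamma^0,s)$ throughout this chaining is the delicate bookkeeping part of the argument.
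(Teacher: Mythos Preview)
The paper does not actually prove this theorem: it is quoted as a known result from \cite{choulli2020new}, and the only argument supplied in the text is the one-line observation that the smallness hypothesis forces $\|u\|_{H^2(\Omega)}/\big(\|u\|_{L^2(\Gamma^0)}+\|\nabla u\|_{L^2(\Gamma^0)}\big)\ge 2c>c>e$, so that the iterated logarithm is well defined and positive. Your proposal, by contrast, outlines a self-contained proof via three-balls propagation plus a boundary-strip interpolation, which is far more than the paper attempts; the relevant comparison is therefore ``citation versus from-scratch sketch'' rather than two competing proofs.

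As an independent sketch your outline captures the right mechanism behind log--log stability in Lipschitz domains (doubly-exponential degradation of the propagation exponent as one approaches a merely Lipschitz boundary), but two steps would need real work before they become a proof. First, odd/even reflection across a flattened Lipschitz piece of $\Gamma^0$ does not yield a harmonic extension: a bi-Lipschitz chart turns $\Delta$ into a divergence-form operator with only $L^\infty$ coefficients, for which naive reflection fails; this step is usually replaced by a quantitative unique-continuation argument from Cauchy data (Carleman estimates or frequency/doubling inequalities) that tolerates Lipschitz coefficients. Second, the final tuning ``$\sigma/a=s/2$'' is not freely available, since $a$ is dictated by the Lipschitz constants and only $\sigma\in(0,1/2)$ is at your disposal; the exponents you can reach this way lie in $(0,1/(2a))$, and recovering the full range $s\in(0,1/2)$ stated in the theorem requires either a sharper chain-length estimate or an extra interpolation.
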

The above result follows directly from \cite[Corollary 2]{choulli2020new} together with the observation that $$\dfrac{\left\Vert u \right\Vert_{H^2(\Omega)}}{||u||_{L^2(\Gamma^0)}+||\nabla u||_{L^2(\Gamma^0)}}\geq 2 \, c>c.$$

\begin{remark}\label{log stability result}
    For $d=1,2$, a single logarithmic-stability version of Theorem~\ref{loglogstability} holds under the assumption $\Gamma^0 \in C^{1,1}$; see \cite[Theorem~2]{bourgeois2010about}. To the best of our knowledge, these are the only stability estimates presently known for Lipschitz domains. In order to avoid assuming $\Gamma^0\in C^{1,1}$, we rely on Theorem~\ref{loglogstability} throughout this paper.
\end{remark}

\section{ Uniqueness }\label{sec:uniqueness}
In this section, we establish that the operator $\Lambda_{t_0}$ given by  (\ref{operatorofbottom}) is locally one-to-one. It is important to note that the stability analysis in the upcoming section builds on the results established in this part.

By  Remark \ref{Assumption of paper}, the functions $b$, $b_0$, $\zeta$ and $\zeta_0$ are  in $C^1\left(\overline{\mathcal O }\right)$ and $\phi\in H^2(\Omega^{\zeta}_{b})$, $\phi_0\in H^2(\Omega^{\zeta_0}_{b_0})$, where $\phi$ and $\phi_0$  denote the solutions of (\ref{pd3}) and (\ref{pd4}), respectively. To deal with the geometry, we denote  
\begin{equation}\label{defs1ands2}
	S_1=(\zeta-\zeta_0)^{-1}(\mathbb{R}^*_{+}),\;\;S_2=(\zeta-\zeta_0)^{-1}(\mathbb{R}_{-}),
\end{equation}
and we introduce the following Lipschitz continuous functions (see Figure \ref{fig:myplot2})
\begin{equation}\label{defunderlinezetaandb}
	\underline{\zeta}(X) :=\min(\zeta(X),\zeta_0(X)),\;\;\overline{b}(X) :=\max(b(X),b_0(X)),\;\forall X\in \mathcal O .
\end{equation}
\begin{figure}
	\centering
	\begin{tikzpicture}
		\begin{axis}[
			axis lines = middle,
			xlabel = $x$,
			ylabel = $y$,
			xtick =  \empty,
			ytick = \empty,
			xmin = -1,
			xmax = 10,
			ymin = -3, 
			ymax = 15, 
			no markers,
			grid = none,
			every axis x label/.style={at={(current axis.right of origin)},anchor=north east},
			every axis y label/.style={at={(current axis.above origin)},anchor=north east},
			]
			
			\addplot[black, domain=-1:10, samples=100] {sin(deg(x))*cos(deg(x*2))+10} node[pos=0.95, anchor=north] {$\zeta_0 $};
			
			\addplot[dashed, domain=-1:10, samples=100] {2*cos(deg(x*2))+10} node[pos=0.95, anchor=south] {$\zeta$} ;
			\addplot[line width=1.5pt, black, domain=1.5:8.5, samples=100] {min(2*cos(deg(x*2))+10,sin(deg(x))*cos(deg(x*2))+10)} node[pos=0.6, anchor=north] {$\underline{\zeta}$} ;
			
			\addplot[black, domain=-1:10, samples=100, name path=B] {0.5*sin(deg(x))*cos(deg(x))+2.5} node[pos=0.9, anchor=south west] {$b_0 $}; 
			
			\addplot[dashed, domain=-1:10, samples=100, name path=B] {cos(deg(x))+2} node[pos=0.95, anchor=south west] {$b $} ;
			
			\addplot[line width=1.5pt, black, domain=1.5:8.5, samples=100] {max(cos(deg(x))+2,0.5*sin(deg(x))*cos(deg(x))+2.5)} node[pos=0.6, anchor=south west] {$\overline{b} $};

			\draw[black] (axis cs:1.5,0) -- (axis cs:1.5,9.1)  ;
			\draw[black] (axis cs:8.5,0) -- (axis cs:8.5,9.8) ;
			
			\fill (axis cs:1.5,0) circle[radius=2pt] node[below left] {$a_1$};
			\fill (axis cs:8.5,0) circle[radius=2pt] node[below left] {$a_2$};
			
			
			
			
			
		\end{axis}
	\end{tikzpicture}
	\caption{Graphs of the functions $b_0$, $b$, $\zeta _0$ and $\zeta$  for $d=1$ and $\mathcal O=(a_1,a_2)$.}
	\label{fig:myplot2}
\end{figure}
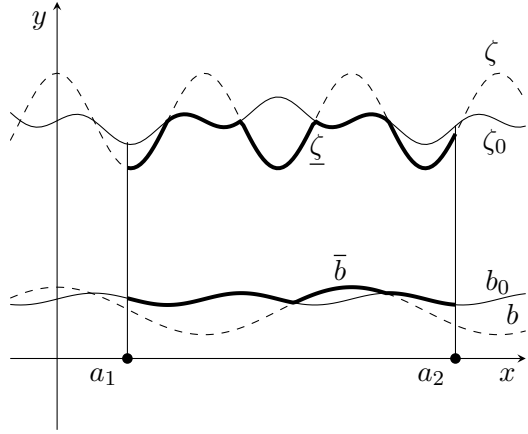

We first state the following inequality, which is central to establishing both uniqueness and stability for our geometric inverse problem.
\begin{lemma}\label{Lemma1useful}
	Let $b$, $b_0$, $\zeta$ and $\zeta_0$ be functions in $C^1\left(\overline{\mathcal O }\right)$ such that (\ref{conditionofliquid}) holds, and  let $\phi\in H^2(\Omega^{\zeta}_{b})$, $\phi_0\in H^2(\Omega^{\zeta_0}_{b_0})$ satisfy systems  (\ref{pd3}) and (\ref{pd4}), respectively. Moreover, assume that
	\begin{equation}\label{conditiononsurfaces}
		|\zeta(X)-\zeta_0(X)|\leq \frac{H_0}{2},\quad \forall X\in \mathcal O .  
	\end{equation}
	Then
	\begin{equation}\label{inequality1}
		\begin{split}
			\displaystyle\int_{\Omega_{b_0}^{\underline{\zeta}}\backslash \Omega_{b}^{\underline{\zeta}}}\left|\nabla\phi_0\right|^2+\displaystyle\int_{\Omega_{b}^{\underline{\zeta}}\backslash \Omega_{b_0}^{\underline{\zeta}}}\left|\nabla\phi\right|^2
			&\leq \displaystyle\int_{\Gamma^{\underline{\zeta}}} \partial_n\phi(\phi-\phi_0)+\displaystyle\int_{\Gamma^{\underline{\zeta}}} \phi_0(\partial_n\phi_0-\partial_n\phi)\\
			&\hspace{0.5cm}+\displaystyle\int_{\Gamma_{\overline{b}}^{\underline{\zeta}}} \partial_n\phi(\phi-\phi_0)+\displaystyle\int_{\Gamma_{\overline{b}}^{\underline{\zeta}}} \phi_0(\partial_n\phi_0-\partial_n\phi)\\
			&\hspace{0.7cm}+\displaystyle\int_{\Gamma_{b}^{\overline{b}}} \partial_n\phi \phi + \displaystyle\int_{\Gamma_{b_0}^{\overline{b}}} \partial_n\phi_0 \phi_0-2\displaystyle\int_{\Gamma^{\overline{b}}} \partial_n\phi \phi_0, 
		\end{split}
	\end{equation}
	where $\underline{\zeta}$ and $\overline{b}$ are given by (\ref{defunderlinezetaandb}). 
\end{lemma}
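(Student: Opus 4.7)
My plan is to express the left-hand side as a difference of Dirichlet energies on three nested Lipschitz subdomains, apply Green's first identity to each (exploiting the harmonicity of $\phi$ and $\phi_0$ together with the Neumann conditions on $\Gamma^b$, $\Gamma^{b_0}$), and then reorganize the resulting boundary integrals to match the target. A single non-positive term, neatly extracted via the identity $|p|^2+|q|^2 = 2p\cdot q + |p-q|^2$, will produce the inequality (rather than an equality).

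First I would verify, using \eqref{conditionofliquid} and the hypothesis \eqref{conditiononsurfaces}, that $\underline{\zeta}-\overline{b}\geq H_0/2>0$ on $\overline{\mathcal O}$ (a short case analysis depending on the signs of $\zeta-\zeta_0$ and $b-b_0$). By Remarks~\ref{remarks-on-the-Lipshitz-domain}, this makes $\Omega_b^{\underline{\zeta}}$, $\Omega_{b_0}^{\underline{\zeta}}$, and $\Omega_{\overline{b}}^{\underline{\zeta}}$ well-defined Lipschitz subdomains; moreover $\phi$ (resp.\ $\phi_0$) remains harmonic and $H^2$ on the subdomains involving $\underline{\zeta}$ and $\overline{b}$, so traces on their Lipschitz boundaries are controlled in the usual sense. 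Then I observe $\Omega_b^{\underline{\zeta}}\cap\Omega_{b_0}^{\underline{\zeta}}=\Omega_{\overline{b}}^{\underline{\zeta}}$, which gives
\[
\text{LHS}=\int_{\Omega_b^{\underline{\zeta}}}|\nabla\phi|^2+\int_{\Omega_{b_0}^{\underline{\zeta}}}|\nabla\phi_0|^2-\int_{\Omega_{\overline{b}}^{\underline{\zeta}}}\bigl(|\nabla\phi|^2+|\nabla\phi_0|^2\bigr).
\]
Applying $|\nabla\phi|^2+|\nabla\phi_0|^2=2\nabla\phi\cdot\nabla\phi_0+|\nabla(\phi-\phi_0)|^2$ on $\Omega_{\overline{b}}^{\underline{\zeta}}$ and discarding the non-positive contribution $-\int_{\Omega_{\overline{b}}^{\underline{\zeta}}}|\nabla(\phi-\phi_0)|^2$ yields
\[
\text{LHS}\leq \int_{\Omega_b^{\underline{\zeta}}}|\nabla\phi|^2+\int_{\Omega_{b_0}^{\underline{\zeta}}}|\nabla\phi_0|^2-2\int_{\Omega_{\overline{b}}^{\underline{\zeta}}}\nabla\phi\cdot\nabla\phi_0.
\]

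Next I apply Green's first identity to each of the three bulk integrals. Using $\Delta\phi=0$ and $\partial_n\phi=0$ on $\Gamma^b$, the first becomes $\int_{\Gamma^{\underline{\zeta}}}\phi\,\partial_n\phi+\int_{\Gamma_b^{\underline{\zeta}}}\phi\,\partial_n\phi$; an analogous identity holds for $\phi_0$. Writing the cross term as $\int_{\Omega_{\overline{b}}^{\underline{\zeta}}}\nabla\phi\cdot\nabla\phi_0=\int_{\partial\Omega_{\overline{b}}^{\underline{\zeta}}}\phi_0\,\partial_n\phi$ (outer normal w.r.t.\ $\Omega_{\overline{b}}^{\underline{\zeta}}$) yields contributions on $\Gamma^{\underline{\zeta}}$, on $\Gamma^{\overline{b}}$ (with the downward orientation, which flips a sign relative to the upward convention used in the statement), and on $\Gamma_{\overline{b}}^{\underline{\zeta}}$. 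Finally, I split the lateral pieces as $\Gamma_b^{\underline{\zeta}}=\Gamma_b^{\overline{b}}\cup\Gamma_{\overline{b}}^{\underline{\zeta}}$ and $\Gamma_{b_0}^{\underline{\zeta}}=\Gamma_{b_0}^{\overline{b}}\cup\Gamma_{\overline{b}}^{\underline{\zeta}}$. Collecting terms, the $\Gamma^{\underline{\zeta}}$ integrand becomes $(\phi-\phi_0)\partial_n\phi+\phi_0(\partial_n\phi_0-\partial_n\phi)$ via $\phi\,\partial_n\phi+\phi_0\,\partial_n\phi_0-2\phi_0\,\partial_n\phi$, and the $\Gamma_{\overline{b}}^{\underline{\zeta}}$ integrand takes the identical form; the residual integrals on $\Gamma_b^{\overline{b}}$, $\Gamma_{b_0}^{\overline{b}}$, and $\Gamma^{\overline{b}}$ appear with the coefficients stated in \eqref{inequality1}.

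The main obstacle is bookkeeping rather than conceptual: one must carefully track the orientation of the outer normal on $\Gamma^{\overline{b}}$ (which is an interior hypersurface of $\Omega_b^{\underline{\zeta}}$ and $\Omega_{b_0}^{\underline{\zeta}}$, but a boundary piece of $\Omega_{\overline{b}}^{\underline{\zeta}}$) and reconcile it with the sign convention implicit in the statement, and one must justify Green's identity on Lipschitz subdomains for $H^2$ harmonic functions. The latter follows from the standard trace theory invoked in Theorem~\ref{usfulequalities}, since the normal derivatives live in $L^2(\partial\Omega_{\bullet})$ and the boundaries are Lipschitz by Remarks~\ref{remarks-on-the-Lipshitz-domain}. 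Once these are in place, the rearrangement above turns the upper bound into the right-hand side of \eqref{inequality1}, completing the proof.
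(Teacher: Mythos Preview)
Your proposal is correct and follows essentially the same route as the paper's proof: both establish $\underline{\zeta}-\overline{b}\ge H_0/2$, arrive at the key inequality
\[
\text{LHS}\le \int_{\Omega_b^{\underline{\zeta}}}|\nabla\phi|^2+\int_{\Omega_{b_0}^{\underline{\zeta}}}|\nabla\phi_0|^2-2\int_{\Omega_{\overline{b}}^{\underline{\zeta}}}\nabla\phi\cdot\nabla\phi_0
\]
(the paper by expanding $|\nabla\phi-\nabla\phi_0|^2$ and moving it left, you by the equivalent polarization identity), and then apply Green's first identity on the three Lipschitz subdomains with the same lateral splittings $\Gamma_b^{\underline{\zeta}}=\Gamma_b^{\overline{b}}\cup\Gamma_{\overline{b}}^{\underline{\zeta}}$, $\Gamma_{b_0}^{\underline{\zeta}}=\Gamma_{b_0}^{\overline{b}}\cup\Gamma_{\overline{b}}^{\underline{\zeta}}$. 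One minor point: in the paper the symbol $\partial_n$ on $\Gamma^{\overline{b}}$ already denotes the outward normal of $\Omega_{\overline{b}}^{\underline{\zeta}}$, so no explicit sign flip is needed---your orientation remark is harmless but unnecessary.
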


\begin{proof}
	Fix $X\in \mathcal{O}$. Considering separately the cases $\zeta(X)\ge \zeta_0(X)$ and $\zeta(X)<\zeta_0(X)$, the assumptions \eqref{conditionofliquid} and \eqref{conditiononsurfaces} yield
	\begin{equation}\label{conditionontheinfandsubofthedom}
		\underline{\zeta}(X)-\overline{b}(X)\geq \frac{H_0}{2},\quad \forall X\in \mathcal O .
	\end{equation}
	The inequalities (\ref{conditionofliquid}) and (\ref{conditionontheinfandsubofthedom}) ensure that the domains $\Omega_{\overline{b}}^{  \underline{\zeta}  }$, $\Omega_{b}^{  \underline{\zeta}  }$, and   $\Omega_{b_0}^{  \underline{\zeta}  }$  are well defined (see \eqref{domainrefdef}). Following \cite{kang1997inverse}, we get
	\begin{equation}\label{first-eq-proof-key-lemma}
		\begin{split}
			\displaystyle\int_{\Omega_{\overline{b}}^{  \underline{\zeta}  }}\left| \nabla\phi -\nabla\phi_0\right|^2+\displaystyle\int_{\Omega_{b_0}^{\underline{\zeta}}\backslash \Omega_{b}^{\underline{\zeta}}}\left|\nabla\phi_0\right|^2+\displaystyle\int_{\Omega_{b}^{\underline{\zeta}}\backslash \Omega_{b_0}^{\underline{\zeta}}}\left|\nabla\phi\right|^2
			&=\displaystyle\int_{\Omega_{\overline{b}}^{  \underline{\zeta}  }}\left| \nabla\phi\right|^2+\displaystyle\int_{\Omega_{b}^{\underline{\zeta}}\backslash \Omega_{b_0}^{\underline{\zeta}}}\left|\nabla\phi\right|^2+\displaystyle\int_{\Omega_{\overline{b}}^{  \underline{\zeta}  }}\left| \nabla\phi_0\right|^2\\&\quad+\displaystyle\int_{\Omega_{b_0}^{\underline{\zeta}}\backslash \Omega_{b}^{\underline{\zeta}}}\left|\nabla\phi_0\right|^2-2\displaystyle\int_{\Omega_{\overline{b}}^{\underline{\zeta}}}\nabla\phi\cdot\nabla\phi_0.\\
		\end{split}
	\end{equation}
	Using the fact that 
	$$\Omega_{\overline{b}}^{\underline{\zeta}}\bigcup\left(\Omega_{b}^{\underline{\zeta}}\backslash\Omega_{b_0}^{\underline{\zeta}}\right)=\Omega_{b}^{\underline{\zeta}}\;\;\text{and }\;\;\Omega_{\overline{b}}^{\underline{\zeta}}\bigcup\left(\Omega_{b_0}^{\underline{\zeta}}\backslash\Omega_{b}^{\underline{\zeta}}\right)=\Omega_{b_0}^{\underline{\zeta}},$$
	and observing that the sets $\Omega_{\overline{b}}^{\underline{\zeta}}$, $\Omega_{b}^{\underline{\zeta}}\backslash\Omega_{b_0}^{\underline{\zeta}}$ and $\Omega_{b_0}^{\underline{\zeta}}\backslash\Omega_{b}^{\underline{\zeta}}$ are disjoint (see Figure \ref{fig:myplot2}), we obtain from \eqref{first-eq-proof-key-lemma}
	\begin{equation}\label{lemma1equation1}
		\begin{split}
			\displaystyle\int_{\Omega_{b_0}^{\underline{\zeta}}\backslash \Omega_{b}^{\underline{\zeta}}}\left|\nabla\phi_0\right|^2+\displaystyle\int_{\Omega_{b}^{\underline{\zeta}}\backslash \Omega_{b_0}^{\underline{\zeta}}}\left|\nabla\phi\right|^2&\leq
			\displaystyle\int_{\Omega_{b}^{  \underline{\zeta}  }}\left| \nabla\phi\right|^2+\displaystyle\int_{\Omega_{b_0}^{  \underline{\zeta}  }}\left| \nabla\phi_0\right|^2-2\displaystyle\int_{\Omega_{\overline{b}}^{\underline{\zeta}}}\nabla\phi\cdot\nabla\phi_0.
		\end{split}
	\end{equation}
	On the other hand,  using Remark \ref{remarks-on-the-Lipshitz-domain},  \eqref{conditionontheinfandsubofthedom} with the fact that $\underline\zeta \in C^{0,1}(\overline{\mathcal O })$, and Green's first identity for Lipschitz domains (see \cite{grisvard2011elliptic,necas2011direct}), we obtain
	\begin{equation}\label{lamma1eq2}
		\begin{split}
			\displaystyle\int_{\Omega_{b}^{  \underline{\zeta}  }}\left| \nabla\phi\right|^2&=\displaystyle\int_{\Gamma^{\underline{\zeta}}} \partial_n\phi \phi +\displaystyle\int_{\Gamma_{b}^{\underline{\zeta}}} \partial_n\phi \phi \\
			&=\displaystyle\int_{\Gamma^{\underline{\zeta}}} \partial_n\phi \phi +\displaystyle\int_{\Gamma_{b}^{\overline{b}}} \partial_n\phi \phi+\displaystyle\int_{\Gamma_{\overline{b}}^{\underline{\zeta}}} \partial_n\phi \phi.
		\end{split}
	\end{equation}
	With the same arguments, we obtain
	\begin{equation}\label{lamma1eq3}
		\displaystyle\int_{\Omega_{b_0}^{  \underline{\zeta}  }}\left| \nabla\phi_0\right|^2=\displaystyle\int_{\Gamma^{\underline{\zeta}}} \partial_n\phi_0 \phi_0 +\displaystyle\int_{\Gamma_{b_0}^{\overline{b}}} \partial_n\phi_0 \phi_0+\displaystyle\int_{\Gamma_{\overline{b}}^{\underline{\zeta}}} \partial_n\phi_0 \phi_0,
	\end{equation}
	
	\begin{equation}\label{lamma1eq4}
		\displaystyle\int_{\Omega_{\overline{b}}^{  \underline{\zeta}  }}\nabla\phi\cdot\nabla\phi_0=\displaystyle\int_{\Gamma^{\underline{\zeta}}} \partial_n\phi \phi_0 +\displaystyle\int_{\Gamma^{\overline{b}}} \partial_n\phi \phi_0+\displaystyle\int_{\Gamma_{\overline{b}}^{\underline{\zeta}}} \partial_n\phi \phi_0.
	\end{equation}
	Substituting equations (\ref{lamma1eq2}), (\ref{lamma1eq3}) and (\ref{lamma1eq4}) into (\ref{lemma1equation1}) 
	yields (\ref{inequality1}). The proof is complete.
\end{proof}

In the following result, we bound from above the integrals of the right-hand side of  (\ref{inequality1}) by terms that depend only on measurements taken over $\Gamma^{\underline{\zeta}}$.  The choice of $\Gamma^{\underline{\zeta}}$ is motivated by the fact that, in general,  one of the solutions $\phi$ and $\phi_0$ may not be defined in some regions above $\Gamma^{\underline{\zeta}}$ (see figure \ref{fig:myplot2}).
\begin{proposition}\label{thoeremgeneral}
	 Let $s\in (0,1/2)$. Let $b$, $b_0$, $\zeta$ and $\zeta_0$ be functions in $C^1\left(\overline{\mathcal O }\right)$ such that (\ref{conditionofliquid}) and (\ref{conditiononsurfaces}) hold.  Then there exist two constants $c>e$ and $C>0$ such that if $\phi\in H^2(\Omega^{\zeta}_{b})$ and $\phi_0\in H^2(\Omega^{\zeta_0}_{b_0})$ satisfy systems (\ref{pd3}) and (\ref{pd4}), respectively, with $\phi -\phi _0\ne 0$ and 
	 $$||\phi-\phi_0||_{L^2(\Gamma^{\underline{\zeta}})}+||\nabla (\phi-\phi_0)||_{L^2(\Gamma^{\underline{\zeta}})}\leq\frac{||\phi-\phi_0||_{H^2(\Omega_{\overline{b}}^{\underline{\zeta}})}}{2\,c}, $$
	 we have
	\begin{equation}\label{inequality3}
		\begin{split}
			\displaystyle\int_{\Omega_{b_0}^{\underline{\zeta}}\backslash \Omega_{b}^{\underline{\zeta}}}\left|\nabla\phi_0\right|^2+\displaystyle\int_{\Omega_{b}^{\underline{\zeta}}\backslash \Omega_{b_0}^{\underline{\zeta}}}\left|\nabla\phi\right|^2
			&\leq \displaystyle\int_{\Gamma^{\underline{\zeta}}} \partial_n\phi(\phi-\phi_0)+\displaystyle\int_{\Gamma^{\underline{\zeta}}} \phi_0(\partial_n\phi_0-\partial_n\phi)+Tlog+Tbot,\\
		\end{split}
	\end{equation}
	where  the terms $Tlog$ and $Tbot$ are given by 
	\begin{equation}\label{logterm}
		\begin{split}
			Tlog&:=C \left\Vert \partial_n \phi\right\Vert_{L^{2}(\Gamma_{\overline{b}}^{\underline{\zeta}} )}||\phi-\phi_0||_{H^2(\Omega_{\overline{b}}^{\underline{\zeta}})} \left[  \ln\ln\left(   \dfrac{||\phi-\phi_0||_{H^2(\Omega_{\overline{b}}^{\underline{\zeta}})}}{||\phi-\phi_0||_{L^2(\Gamma^{\underline{\zeta}})}+||\nabla(\phi-\phi_0)||_{L^2(\Gamma^{\underline{\zeta}})}}     \right)      \right]^{-s/2}\\&\hspace{0.6cm}+3C || \phi_0||_{L^2(\partial\Omega_{\overline{b}}^{\underline{\zeta}})}||\phi-\phi_0||_{H^2(\Omega_{\overline{b}}^{\underline{\zeta}})} \left[  \ln\ln\left(   \dfrac{||\phi-\phi_0||_{H^2(\Omega_{\overline{b}}^{\underline{\zeta}})}}{||\phi-\phi_0||_{L^2(\Gamma^{\underline{\zeta}})}+||\nabla (\phi-\phi_0)||_{L^2(\Gamma^{\underline{\zeta}})}}     \right)      \right]^{-s^2/2},
		\end{split}
	\end{equation}
	and
    \begin{equation}\label{bottem}
		\begin{split}
			Tbot&:=\left(||\partial_n \phi||_{L^2(\Gamma_{b}^{\overline{b}})} ||\phi||_{L^{\infty}(\Gamma_{b}^{\underline{\zeta}})}+||\partial_n \phi_0||_{L^2(\Gamma_{b_0}^{\overline{b}})} ||\phi_0||_{L^{\infty}(\Gamma_{b_0}^{\underline{\zeta}})}\right)\left( \mathcal{H}_{d-1}(\partial \mathcal O)  \sup_{X\in \partial \mathcal O} [\overline{b}(X)-b(X)]\right)^{\frac{1}{2}} 
		\end{split}
	\end{equation}
	Here, the constants  $C$ and $c$ depend only on $\Omega_{\overline{b}}^{\underline{\zeta}}$, $\Gamma^{\underline{\zeta}}$ and $s$, and $\mathcal{H}_{d-1}$ denotes the $(d-1)$\nobreakdash-dimensional Hausdorff measure.
\end{proposition}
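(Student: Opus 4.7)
The plan is to start from the inequality \eqref{inequality1} of Lemma~\ref{Lemma1useful} and estimate the five boundary integrals on its right-hand side that are not already on $\Gamma^{\underline{\zeta}}$. These split naturally into two groups: two integrals on the lateral walls $\Gamma_{\overline{b}}^{\underline{\zeta}}$ (contributing $Tlog$), and three integrals near the bottom, on $\Gamma_b^{\overline{b}}$, $\Gamma_{b_0}^{\overline{b}}$ and $\Gamma^{\overline{b}}$ (contributing $Tbot$).

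\emph{Lateral walls.} Set $u:=\phi-\phi_0$. Subtracting \eqref{pd4} from \eqref{pd3} shows that $u$ is harmonic in $\Omega_{\overline{b}}^{\underline{\zeta}}$, which by Remark~\ref{remarks-on-the-Lipshitz-domain} and the lower bound \eqref{conditionontheinfandsubofthedom} is a bounded Lipschitz domain. Theorem~\ref{loglogstability}, applied with $\Omega=\Omega_{\overline{b}}^{\underline{\zeta}}$ and $\Gamma^0=\Gamma^{\underline{\zeta}}$, then yields
\[
\|u\|_{H^1(\Omega_{\overline{b}}^{\underline{\zeta}})}\;\le\;C\,\|u\|_{H^2(\Omega_{\overline{b}}^{\underline{\zeta}})}\,[\ln\ln R]^{-s/2},\qquad R:=\frac{\|u\|_{H^2(\Omega_{\overline{b}}^{\underline{\zeta}})}}{\|u\|_{L^2(\Gamma^{\underline{\zeta}})}+\|\nabla u\|_{L^2(\Gamma^{\underline{\zeta}})}}.
\]
Cauchy--Schwarz on $\int_{\Gamma_{\overline{b}}^{\underline{\zeta}}}\partial_n\phi\,(\phi-\phi_0)$ together with the continuous trace $H^1(\Omega_{\overline{b}}^{\underline{\zeta}})\hookrightarrow L^2(\Gamma_{\overline{b}}^{\underline{\zeta}})$ produces the first summand of $Tlog$. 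For $\int_{\Gamma_{\overline{b}}^{\underline{\zeta}}}\phi_0(\partial_n\phi_0-\partial_n\phi)$, Cauchy--Schwarz reduces matters to bounding $\|\partial_n u\|_{L^2(\Gamma_{\overline{b}}^{\underline{\zeta}})}$ (and the other factor by $\|\phi_0\|_{L^2(\partial\Omega_{\overline{b}}^{\underline{\zeta}})}$); combining the interpolation $\|u\|_{H^{3/2}}\le\|u\|_{H^1}^{1/2}\|u\|_{H^2}^{1/2}$, the trace inequality $\|\nabla u\|_{L^2(\partial\Omega)}\lesssim\|u\|_{H^{3/2}(\Omega)}$, and the log-log $H^1$-estimate above yields an exponent $-s/4$, which for $s\in(0,1/2)$ majorizes the stated $-s^2/2$, hence the second summand of $Tlog$.

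\emph{Bottom strip.} The three integrals are treated jointly. The central identification is that the slabs $\Omega_b^{\overline{b}}$ and $\Omega_{b_0}^{\overline{b}}$ coincide, up to null sets, with $\Omega_b^{\underline{\zeta}}\setminus\Omega_{b_0}^{\underline{\zeta}}$ and $\Omega_{b_0}^{\underline{\zeta}}\setminus\Omega_b^{\underline{\zeta}}$ respectively, so the Dirichlet energies of $\phi$ and $\phi_0$ on these slabs are exactly the left-hand side of \eqref{inequality1}. Applying Green's identity to $\phi$ on $\Omega_b^{\overline{b}}$ (harmonic with $\partial_n\phi=0$ on $\Gamma^b$) and, symmetrically, to $\phi_0$ on $\Omega_{b_0}^{\overline{b}}$, and writing $\phi_0$ on the part $\Gamma^{\overline{b}}\cap(\{b<b_0\}\times\mathbb{R})\subset\Gamma^{b_0}$ as its Dirichlet trace (on the complement $\partial_n\phi=0$, so the integrand vanishes), one rewrites the cross term $-2\int_{\Gamma^{\overline{b}}}\partial_n\phi\,\phi_0$ in terms of (i) volume energies on $\Omega_b^{\overline{b}}$ and $\Omega_{b_0}^{\overline{b}}$ that cancel, after the correct combination, against the LHS of \eqref{inequality1}, (ii) integrals on the thin vertical strips $\Gamma_b^{\overline{b}}$ and $\Gamma_{b_0}^{\overline{b}}$ that combine with the remaining two bottom integrals, and (iii) a residual log-log contribution on $\Gamma^{\overline{b}}$ that can be merged into $Tlog$ via Theorem~\ref{loglogstability} applied to $u$. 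Finally, Cauchy--Schwarz paired with the Fubini-type estimate
\[
\mathcal{H}_d(\Gamma_b^{\overline{b}})\;=\;\int_{\partial\mathcal{O}}[\overline{b}(X)-b(X)]\,d\mathcal{H}_{d-1}(X)\;\le\;\mathcal{H}_{d-1}(\partial\mathcal{O})\sup_{X\in\partial\mathcal{O}}[\overline{b}(X)-b(X)]
\]
(and its analog for $\Gamma_{b_0}^{\overline{b}}$), together with the inclusions $\Gamma_b^{\overline{b}}\subset\Gamma_b^{\underline{\zeta}}$ and $\Gamma_{b_0}^{\overline{b}}\subset\Gamma_{b_0}^{\underline{\zeta}}$ (to pass from an $L^2$ to an $L^\infty$ bound on the height factor), produces \eqref{bottem}.

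\emph{Main obstacle.} The delicate step is the rewriting of $-2\int_{\Gamma^{\overline{b}}}\partial_n\phi\,\phi_0$: $\phi_0$ is not defined inside $\Omega_b^{\overline{b}}$ (it lies below the bottom $b_0$ on $\{b<b_0\}$), so Green's identity applies to $\phi$ alone and the product $\partial_n\phi\,\phi_0$ must be handled via the splitting $\phi_0=\phi+(\phi_0-\phi)$ and a symmetric computation on $\{b>b_0\}$ for $\phi_0$. Matching the two symmetric manipulations so that the emerging volume contributions exactly cancel the LHS of \eqref{inequality1} (rather than leaving a residual of the wrong sign), while routing the cross-boundary residuals into either a thin-strip integral or a log-log term, requires careful bookkeeping. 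Since the level sets $\{b<b_0\}$ and $\{b>b_0\}$ are a priori only measurable, applying Green's identity on the corresponding slabs may further require approximation by smooth open truncations and a passage to the limit.
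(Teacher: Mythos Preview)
Your treatment of the lateral-wall integrals ($J_1$ in the paper's notation) is essentially the paper's argument: Cauchy--Schwarz, trace, interpolation, and Theorem~\ref{loglogstability} applied to $u=\phi-\phi_0$. (The paper interpolates at $H^{2-s}$ rather than $H^{3/2}$, which is why the exponent $-s^2/2$ appears; your $-s/4$ is sharper, so this is fine up to constants.)

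The genuine gap is in the bottom-strip part. Your plan hinges on a cancellation between volume energies produced by Green's identity on the slabs and the left-hand side of \eqref{inequality1}, but this cancellation does not occur. Concretely: once you observe that $J_3=-2\int_{\Gamma^{\overline{b}}}\partial_n\phi\,\phi_0$ is supported on $\Gamma^{\overline{b}}\cap\Gamma^{b_0}$ (since $\partial_n\phi=0$ on $\Gamma^b$), and write $\phi_0=\phi+(\phi_0-\phi)$, Green's identity for $\phi$ on $\Omega_b^{\overline{b}}$ (with outward normal on $\Gamma^{\overline{b}}$ pointing \emph{up}, opposite to the lemma's convention) yields
\[
-2\int_{\Gamma^{\overline{b}}\cap\Gamma^{b_0}}\partial_n\phi\,\phi \;=\; +\,2\int_{\Omega_b^{\overline{b}}}|\nabla\phi|^2 \;-\; 2\int_{\Gamma_b^{\overline{b}}}\partial_n\phi\,\phi.
\]
The volume term is $+2\int_{\Omega_b^{\underline{\zeta}}\setminus\Omega_{b_0}^{\underline{\zeta}}}|\nabla\phi|^2$, i.e.\ \emph{plus} twice one of the LHS summands; moving it left flips the sign of that summand and destroys the one-sided bound. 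There is no ``symmetric computation on $\{b>b_0\}$ for $\phi_0$'' available to compensate, because $J_3$ contains only $\partial_n\phi$, not $\partial_n\phi_0$.

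The paper bypasses all of this with a one-line observation you are missing: on $\Gamma^{\overline{b}}\cap\Gamma^{b_0}$ one has $\partial_n\phi_0=0$ (impermeability for $\phi_0$), so
\[
J_3=-2\int_{\Gamma^{\overline{b}}\cap\Gamma^{b_0}}\partial_n\phi\,\phi_0=-2\int_{\Gamma^{\overline{b}}\cap\Gamma^{b_0}}(\partial_n\phi-\partial_n\phi_0)\,\phi_0,
\]
which, after Cauchy--Schwarz and Theorem~\ref{usfulequalities}, is bounded by $2C\|\phi_0\|_{L^2(\Gamma^{\overline{b}})}\|\phi-\phi_0\|_{H^1(\partial\Omega_{\overline{b}}^{\underline{\zeta}})}$ and hence feeds directly into the log-log machinery alongside $J_1$. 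Thus the paper groups $J_1+J_3\le Tlog$ and handles $J_2\le Tbot$ by exactly your Fubini estimate on the thin strips --- no Green identities on irregular slabs, no approximation arguments, and no cancellation bookkeeping are needed.
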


\begin{proof}
	Let $s\in \left(0,\frac{1}{2} \right)$.   From Lemma \ref{Lemma1useful}, we have 
	\begin{equation}\label{step1inproof}
		\begin{split}
			\displaystyle\int_{\Omega_{b_0}^{\underline{\zeta}}\backslash \Omega_{b}^{\underline{\zeta}}}\left|\nabla\phi_0\right|^2+\displaystyle\int_{\Omega_{b}^{\underline{\zeta}}\backslash \Omega_{b_0}^{\underline{\zeta}}}\left|\nabla\phi\right|^2
			&\leq \displaystyle\int_{\Gamma^{\underline{\zeta}}} \partial_n\phi(\phi-\phi_0)+\displaystyle\int_{\Gamma^{\underline{\zeta}}} \phi_0(\partial_n\phi_0-\partial_n\phi)+J_1+J_2+J_3,
		\end{split}
	\end{equation}
	where
	\begin{equation*}
		\begin{split}
			J_1&=\displaystyle\int_{\Gamma_{\overline{b}}^{\underline{\zeta}}} \partial_n\phi(\phi-\phi_0)+\displaystyle\int_{\Gamma_{\overline{b}}^{\underline{\zeta}}} \phi_0(\partial_n\phi_0-\partial_n\phi),\;\;
			J_2=\displaystyle\int_{\Gamma_{b}^{\overline{b}}} \partial_n\phi \phi + \displaystyle\int_{\Gamma_{b_0}^{\overline{b}}} \partial_n\phi_0 \phi_0,\;\;
			J_3=-2\displaystyle\int_{\Gamma^{\overline{b}}} \partial_n\phi \phi_0.
		\end{split}
	\end{equation*}
	As pointed out earlier, in the subsequent steps of the proof, we aim to derive upper bounds for $J_1$, $J_2$, and  $J_3$, depending solely on measurements over $\Gamma^{\underline{\zeta}}$ and values of $b$ and $b_0$ over $  \partial \mathcal O$. Specifically, we intend to demonstrate that 
	\begin{equation}\label{whatwewanttoproofinthis prop}
		J_1+J_3\leq Tlog \ \ \textrm{ and } \ \ 
		J_2\leq Tbot,
	\end{equation}
	where $Tlog$ and $Tbot$ are given by (\ref{logterm}) and (\ref{bottem}), respectively.
	
	Let us start with the upper bound for $J_1$. Using Cauchy-Schwarz inequality, we obtain 
	\begin{equation*}
		\begin{split}
			J_1&\leq \left\Vert \partial_n \phi\right\Vert_{L^{2}(\Gamma_{\overline{b}}^{\underline{\zeta}} )}\left\Vert  \phi-\phi_0\right\Vert_{L^{2}(\Gamma_{\overline{b}}^{\underline{\zeta}} )}+\left\Vert  \phi_0\right\Vert_{L^{2}(\Gamma_{\overline{b}}^{\underline{\zeta}} )}\left\Vert \partial_n\phi-\partial_n\phi_0 \right\Vert_{L^{2}(\Gamma_{\overline{b}}^{\underline{\zeta}} )}\\
			&\leq \left\Vert \partial_n \phi\right\Vert_{L^{2}(\Gamma_{\overline{b}}^{\underline{\zeta}} )}\left\Vert  \phi-\phi_0\right\Vert_{L^{2}(\partial\Omega_{\overline{b}}^{\underline{\zeta}} )}+\left\Vert  \phi_0\right\Vert_{L^{2}(\Gamma_{\overline{b}}^{\underline{\zeta}} )}\left\Vert \partial_n\phi-\partial_n\phi_0 \right\Vert_{L^{2}(\partial\Omega_{\overline{b}}^{\underline{\zeta}} )}.
		\end{split}
	\end{equation*}
	Using  the trace theorem (see e.g. \cite{grisvard2011elliptic}) and Theorem \ref{usfulequalities},  there exist two positive constants denoted $C_1$ and $C_2$ such that 
	$$J_1\leq C_1 \left\Vert \partial_n \phi\right\Vert_{L^{2}(\Gamma_{\overline{b}}^{\underline{\zeta}} )}\left\Vert  \phi-\phi_0\right\Vert_{H^{1}(\Omega_{\overline{b}}^{\underline{\zeta}} )}+C_2\left\Vert  \phi_0\right\Vert_{L^{2}(\Gamma_{\overline{b}}^{\underline{\zeta}} )}\left\Vert \phi-\phi_0 \right\Vert_{H^{1}(\partial\Omega_{\overline{b}}^{\underline{\zeta}} )}.$$
	Once more, leveraging the trace theorem \cite{ding1996proof}, there exists a positive constant $C_3$ such that 
	$$\left\Vert \phi-\phi_0 \right\Vert_{H^{1}(\partial\Omega_{\overline{b}}^{\underline{\zeta}} )}\leq C_3 \left\Vert \phi-\phi_0 \right\Vert_{H^{\frac{3}{2}+l}(\Omega_{\overline{b}}^{\underline{\zeta}} )}$$
	where $l=\frac{1}{2}-s>0$. By an interpolation argument, we obtain 
	$$\left\Vert \phi-\phi_0 \right\Vert_{H^{1}(\partial\Omega_{\overline{b}}^{\underline{\zeta}} )}\leq C_3 \left\Vert \phi-\phi_0 \right\Vert^s_{H^{1}(\Omega_{\overline{b}}^{\underline{\zeta}} )}\left\Vert \phi-\phi_0 \right\Vert^{1-s}_{H^{2}(\Omega_{\overline{b}}^{\underline{\zeta}} )}.$$
	Then, for  $C_4:=\max(C_1,C_2C_3)$, we infer that
	\begin{equation}\label{I1upperbound}
		J_1\leq C_4 \left\Vert \partial_n \phi\right\Vert_{L^{2}(\Gamma_{\overline{b}}^{\underline{\zeta}} )}\left\Vert  \phi-\phi_0\right\Vert_{H^{1}(\Omega_{\overline{b}}^{\underline{\zeta}} )}+C_4\left\Vert  \phi_0\right\Vert_{L^{2}(\Gamma_{\overline{b}}^{\underline{\zeta}} )}\left\Vert \phi-\phi_0 \right\Vert^{1-s}_{H^{2}(\Omega_{\overline{b}}^{\underline{\zeta}} )}\left\Vert \phi-\phi_0 \right\Vert^s_{H^{1}(\Omega_{\overline{b}}^{\underline{\zeta}} )}.
	\end{equation}
	We turn our attention to $J_3$. Firstly, observe that the boundary  $\Gamma^{\overline{b}}$ can be separated as follows
	$$\Gamma^{\overline{b}}=(\Gamma^{\overline{b}}\cap\Gamma^{b_0})\cup(\Gamma^{\overline{b}}\cap\Gamma^{b}).$$
	Then, using the impermeability condition 
	$$\partial_n\phi_{|_{\Gamma^{b}}}=0\;\;\text{and}\;\; \partial_n\phi_{0|_{\Gamma^{b_0}}}=0,$$
	we derive the following equations
	\begin{equation*}
		\begin{split}
			J_3= -2\displaystyle\int_{\Gamma^{\overline{b}}} \partial_n\phi \phi_0=-2\displaystyle\int_{\Gamma^{\overline{b}}\cap\Gamma^{b_0}} \partial_n\phi \phi_0=-2\displaystyle\int_{\Gamma^{\overline{b}}\cap\Gamma^{b_0}} \left(\partial_n\phi-\partial_n\phi_0\right) \phi_0.
		\end{split}
	\end{equation*}
	Using Cauchy-Schwarz inequality, we obtain 
	$$J_3\leq 2\left\Vert  \phi_0\right\Vert_{L^{2}(\Gamma^{\overline{b}} )} \left\Vert  \partial_n\phi-\partial_n\phi_0\right\Vert_{L^{2}(\partial\Omega_{\overline{b}}^{\underline{\zeta}} )}.$$
	With arguments similar to those used to bound from above $J_1$, one can check that we have 
	
	\begin{equation}\label{upperboundI3}
		J_3\leq 2C_4\left\Vert  \phi_0\right\Vert_{L^{2}(\Gamma^{\overline{b}} )} \left\Vert \phi-\phi_0 \right\Vert^{1-s}_{H^{2}(\Omega_{\overline{b}}^{\underline{\zeta}} )}\left\Vert \phi-\phi_0 \right\Vert^s_{H^{1}(\Omega_{\overline{b}}^{\underline{\zeta}} )}.
	\end{equation}
	Summing up the inequalities (\ref{I1upperbound}) and (\ref{upperboundI3}) term by term, we obtain
	
	\begin{equation}\label{upperboundI23}
		J_1+J_3\leq C_4 \left\Vert \partial_n \phi\right\Vert_{L^{2}(\Gamma_{\overline{b}}^{\underline{\zeta}} )}\left\Vert  \phi-\phi_0\right\Vert_{H^{1}(\Omega_{\overline{b}}^{\underline{\zeta}} )} + 3C_4\left\Vert  \phi_0\right\Vert_{L^{2}(\partial\Omega_{\overline{b}}^{\underline{\zeta}} )} \left\Vert \phi-\phi_0 \right\Vert^{1-s}_{H^{2}(\Omega_{\overline{b}}^{\underline{\zeta}} )}\left\Vert \phi-\phi_0 \right\Vert^s_{H^{1}(\Omega_{\overline{b}}^{\underline{\zeta}} )}.
	\end{equation}
	Applying Theorem \ref{loglogstability} yields the inequality 
	$$J_1+J_3\leq Tlog,$$
	where $Tlog$ is given by (\ref{logterm}). Thus, we have derived the first inequality in (\ref{whatwewanttoproofinthis prop}).
	To complete the proof, it is sufficient to prove that
	$J_2\leq Tbot,$ where  $Tbot$  is given by (\ref{bottem}).
	Using Cauchy-Schwarz inequality, we have 
	\begin{equation}\label{tbotfinalI2}
		\begin{split}
			J_2&=\displaystyle\int_{\Gamma_{b}^{\overline{b}}} \partial_n\phi \phi + \displaystyle\int_{\Gamma_{b_0}^{\overline{b}}} \partial_n\phi_0 \phi_0\\
			&\leq \left\Vert \partial_n \phi\right\Vert_{L^{2}(\Gamma_{b}^{\overline{b}} )} \left\Vert \phi\right\Vert_{L^{2}(\Gamma_{b}^{\overline{b}} )}+\left\Vert \partial_n \phi_0\right\Vert_{L^{2}(\Gamma_{b_0}^{\overline{b}} )} \left\Vert \phi_0\right\Vert_{L^{2}(\Gamma_{b_0}^{\overline{b}} )}. 
		\end{split}
	\end{equation}
	Since $\phi\in H^2({\Omega_{b}^{\underline{\zeta}}})$, by the trace theorem $\phi|_{\Gamma_{b}^{\underline{\zeta}}}\in H^{\frac{3}{2}}(\Gamma_{b}^{\underline{\zeta}})$.
	On the other hand, $\Omega_{b}^{\underline{\zeta}}$ is a subdomain of $\mathbb{R}^{d+1}$, and hence $\Gamma_{b}^{\underline{\zeta}}$ is a $d$-dimensional  manifold, with $d\le 2$. Then  $ H^{\frac{3}{2} }(\Gamma_{b}^{\underline{\zeta}})\hookrightarrow L^\infty (\Gamma_{b}^{\underline{\zeta}})$, which  implies that $\phi|_{\Gamma_{b}^{\underline{\zeta}}}\in L^{\infty}(\Gamma_{b}^{\underline{\zeta}}).$ Hence 
	\begin{equation}\label{tbotfinalI22}
		\begin{split}
			\left\Vert \phi\right\Vert_{L^{2}(\Gamma_{b}^{\overline{b}} )}=\left(\displaystyle\int_{\Gamma_{b}^{\overline{b}}} |\phi|^2\right)^{\frac{1}{2}}&\leq ||\phi||_{L^{\infty}(\Gamma_{b}^{\underline{\zeta}})} \left(\displaystyle\int_{\Gamma_{b}^{\overline{b}}} 1 \right)^{\frac{1}{2}}\\  
			&\leq||\phi||_{L^{\infty}(\Gamma_{b}^{\underline{\zeta}})} \left( \mathcal{H}_{d-1} (\partial \mathcal O)  \sup_{X\in \partial \mathcal O} [\overline{b}(X)-b(X)]\right)^{\frac{1}{2}}.
		\end{split}
	\end{equation}
     Here, since $\partial \mathcal O$ is $C^{1}$, we have $\mathcal H_{d-1}(\partial \mathcal O)<\infty$.\\
	Following  parallel arguments, we can deduce the following inequality
	\begin{equation}\label{tbotfinalI23}
		\begin{split}
			\left\Vert \phi_0\right\Vert_{L^{2}(\Gamma_{b_0}^{\overline{b}} )}\leq ||\phi_0||_{L^{\infty}(\Gamma_{b_0}^{\underline{\zeta}})}
			\left( \mathcal{H}_{d-1} (\partial \mathcal O)  \sup_{X\in \partial \mathcal O} [\overline{b}(X)-b(X)]\right)^{\frac{1}{2}}. 
		\end{split}
	\end{equation}
	Injecting the inequalities (\ref{tbotfinalI22}) and (\ref{tbotfinalI23}) into (\ref{tbotfinalI2}) results in $J_2\leq Tbot$, which completes the proof.
\end{proof}

We now present the first main result of the paper. In particular, this identifiability result extends \cite{fontelos2017bottom} to $C^{1}$ bathymetries and to a truncated-domain setting, while avoiding restrictive assumptions on the boundary conditions.

\begin{theorem}\label{uniqtheorem}
	Let $b$, $b_0$, $\zeta$ and $\zeta_0$ be functions in $C^1\left(\overline{\mathcal O}\right)$ such that (\ref{conditionofliquid}) and (\ref{conditiononsurfaces}) hold, and  let $\phi\in H^2(\Omega^{\zeta}_{b})$, $\phi_0\in H^2(\Omega^{\zeta_0}_{b_0})$ satisfy systems (\ref{pd3}) and (\ref{pd4}), respectively. Assume that
	\begin{equation}\label{conditioninsurface}
		\zeta(X)=\zeta_0(X),\;\; \psi(X)=\psi_0(X),\;\; \partial_n\phi_{|_{\Gamma^{\zeta}}}(X)=\partial_n\phi_{0|_{\Gamma^{\zeta_0}}}(X),\quad \forall X\in \mathcal O,
	\end{equation}
	\begin{equation}
		\label{nonconstant}
		\psi _0\ne Const \textrm { in } { \mathcal O}
	\end{equation}
	and 
	\begin{equation}\label{conditiononbottom}
		b(X)=b_0(X),\quad \forall X\in \partial \mathcal O . 
	\end{equation}
	Then
	$$b(X)=b_0(X),\quad\forall X\in \mathcal O .$$
	
\end{theorem}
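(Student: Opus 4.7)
The plan is to combine the equality of the Cauchy data on $\Gamma^{\zeta}$ via unique continuation with Lemma~\ref{Lemma1useful}, so that every boundary contribution on the right-hand side of \eqref{inequality1} collapses. First, since \eqref{conditioninsurface} gives $\underline{\zeta}=\zeta=\zeta_0$, condition \eqref{conditiononsurfaces} holds trivially and Lemma~\ref{Lemma1useful} applies. The function $w:=\phi-\phi_0$ is harmonic on the common domain $\Omega_{\overline{b}}^{\zeta}$ and carries zero Cauchy data on the $C^{1}$ portion $\Gamma^{\zeta}$ of its boundary. Holmgren's theorem (equivalently, analytic continuation of harmonic functions across $\Gamma^{\zeta}$) then yields
$$\phi\equiv\phi_0 \quad \textrm{on } \Omega_{\overline{b}}^{\zeta},$$
so that all traces of $\phi$ and $\phi_0$ on $\partial\Omega_{\overline{b}}^{\zeta}$ agree.

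Next, I would simplify the right-hand side of \eqref{inequality1} term by term: the two integrals on $\Gamma^{\underline{\zeta}}=\Gamma^{\zeta}$ vanish because both Dirichlet and Neumann data coincide there; the two integrals on the lateral face $\Gamma_{\overline{b}}^{\underline{\zeta}}$ vanish by the preceding identity; the sets $\Gamma_{b}^{\overline{b}}$ and $\Gamma_{b_0}^{\overline{b}}$ are empty in view of \eqref{conditiononbottom}, which forces $\overline{b}=b=b_0$ on $\partial\mathcal O$. For the remaining term $-2\int_{\Gamma^{\overline{b}}}\partial_n\phi\,\phi_0$, I would show that $\partial_n\phi\equiv 0$ on all of $\Gamma^{\overline{b}}$: where $\overline{b}=b$, this is precisely the Neumann condition in \eqref{pd3}; where $\overline{b}=b_0>b$, the preceding identity combined with $\partial_n\phi_0=0$ on $\Gamma^{b_0}$ gives $\partial_n\phi=\partial_n\phi_0=0$ as a trace from the upper side $\Omega_{\overline{b}}^{\zeta}$.

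Plugging these simplifications into \eqref{inequality1} yields
$$\int_{\Omega_{b_0}^{\zeta}\setminus\Omega_b^{\zeta}}|\nabla\phi_0|^2+\int_{\Omega_b^{\zeta}\setminus\Omega_{b_0}^{\zeta}}|\nabla\phi|^2\leq 0,$$
so that $\nabla\phi\equiv 0$ on $\Omega_b^{\zeta}\setminus\Omega_{b_0}^{\zeta}$ and $\nabla\phi_0\equiv 0$ on $\Omega_{b_0}^{\zeta}\setminus\Omega_b^{\zeta}$. To conclude by contradiction, suppose $b\not\equiv b_0$ on a connected component of $\mathcal O$; then, up to exchanging the roles of $b$ and $b_0$, there exists a nonempty open set $U\subset\mathcal O$ with $b_0>b$ on $U$, and $\phi$ is locally constant on the open set $\{(X,y):X\in U,\ b(X)<y<b_0(X)\}\subset\Omega_b^{\zeta}$. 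The real-analyticity of the harmonic function $\phi$ then forces $\phi$ to be constant throughout the connected component of $\Omega_b^{\zeta}$ lying above that component of $\mathcal O$, so $\psi=\phi_{|\Gamma^{\zeta}}=\psi_0$ is constant there, contradicting \eqref{nonconstant}.

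The main obstacle is the careful identification $\partial_n\phi\equiv 0$ on $\Gamma^{\overline{b}}$ in the regions where $\overline{b}$ alternates between $b$ and $b_0$: one has to reconcile two different Neumann conditions across the merely Lipschitz graph $\Gamma^{\overline{b}}$, using that the trace of $\nabla\phi$ from $\Omega_{\overline{b}}^{\zeta}$ (via the unique-continuation identity) matches the Neumann value prescribed from below on each piece, and then to assemble these pointwise identities into the vanishing of the full boundary integral.
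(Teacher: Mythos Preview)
Your argument is correct and reaches the same vanishing-energy identity as the paper, but the route is genuinely different in two places. First, to kill the boundary terms on $\Gamma_{\overline{b}}^{\underline{\zeta}}$ and $\Gamma^{\overline{b}}$ in \eqref{inequality1}, you invoke Holmgren/unique continuation to obtain $\phi\equiv\phi_0$ on the whole common domain $\Omega_{\overline{b}}^{\zeta}$; the paper instead feeds the vanishing Cauchy data into Proposition~\ref{thoeremgeneral}, so that the $Tlog$ term collapses via the quantitative log--log estimate of Theorem~\ref{loglogstability}. Second, to pass from zero energy in the inter-bottom region to $b\equiv b_0$, you use the real-analyticity of harmonic functions (locally constant on an open set $\Rightarrow$ constant on the connected component), whereas the paper uses the Lipschitz propagation of smallness (Theorem~\ref{propagationsmallnes}) to force $\int_{\Omega_{b_0}^{\zeta}}|\nabla\phi_0|^2=0$. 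Your proof is more elementary, needing only classical unique continuation rather than the quantitative Theorems~\ref{loglogstability} and~\ref{propagationsmallnes}; the paper's version, by contrast, exhibits uniqueness as the degenerate limit of the stability machinery it later exploits. Regarding your stated ``main obstacle'': once $\phi\equiv\phi_0$ in $\Omega_{\overline{b}}^{\zeta}$ is secured, the identification $\partial_n\phi=0$ on $\Gamma^{\overline{b}}$ is immediate piece by piece (on $\{\overline b=b\}$ from \eqref{pd3}, on $\{\overline b=b_0>b\}$ from $\partial_n\phi=\partial_n\phi_0=0$ as traces from $\Omega_{\overline b}^{\zeta}$), and the Lipschitz nature of $\Gamma^{\overline b}$ causes no trouble since the decomposition $\Gamma^{\overline b}=(\Gamma^{\overline b}\cap\Gamma^{b})\cup(\Gamma^{\overline b}\cap\Gamma^{b_0})$ holds up to a null set for the surface measure.
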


\begin{proof}
	Firstly, we  observe that (\ref{conditioninsurface}) implies 
	\begin{equation}\label{condingrad}
		\nabla\phi_{|_{\Gamma^{\zeta}}}=\nabla\phi_{0|_{\Gamma^{\zeta}}}.
	\end{equation}
	Indeed, let us suppose that (\ref{conditioninsurface}) holds. Then, we have 
	$\psi(X)=\psi_0(X)$ and $\zeta(X)=\zeta_0(X)$ for all $X \in \mathcal O $, where, by definition
	$$\psi(X)=\phi(X,\zeta(X)),\;\; \psi_0(X)=\phi_0(X,\zeta_0(X)),\quad\forall X\in \mathcal O .$$
	Combined with $$\partial_n\phi_{|_{\Gamma^{\zeta}}}=\partial_n\phi_{0|_{\Gamma^{\zeta}}},$$ this gives 
	
	$$(\nabla_X\phi)(X,\zeta(X))=(\nabla_X\phi_0)(X,\zeta(X))\;\;\text{and}\;\;(\partial_y\phi)(X,\zeta(X))=(\partial_y\phi_0)(X,\zeta(X)),$$
	i.e. (\ref{condingrad}) holds.  Injecting (\ref{conditioninsurface}), (\ref{conditiononbottom}) and (\ref{condingrad})  into (\ref{inequality3}) results in 
	$$   \displaystyle\int_{\Omega_{b_0}^{\zeta}\backslash \Omega_{b}^{\zeta}}\left|\nabla\phi_0\right|^2+\displaystyle\int_{\Omega_{b}^{\zeta}\backslash \Omega_{b_0}^{\zeta}}\left|\nabla\phi\right|^2=0,$$
	which is equivalent to
	\begin{equation}\label{proofcorebeforlaststep}
		\displaystyle\int_{\Omega_{b_0}^{\zeta}\backslash \Omega_{b}^{\zeta}}\left|\nabla\phi_0\right|^2=0\;\;\text{and}\;\;\displaystyle\int_{\Omega_{b}^{\zeta}\backslash \Omega_{b_0}^{\zeta}}\left|\nabla\phi\right|^2=0.
	\end{equation}
	In the next steps, we aim to prove that (\ref{proofcorebeforlaststep}) implies $b\equiv b_0$ over the set $\mathcal O$. 
	Observe that the set $\Omega_{b_0}^{\zeta}\backslash \Omega_{b}^{\zeta}$ can be written
	$$\Omega_{b_0}^{\zeta}\backslash \Omega_{b}^{\zeta}=\left\{ (X,y)\in {\mathcal O} _+\times\mathbb{R},\;\; b_0(X)<y\leq b(X) \right\},$$
	where ${\mathcal O} _+:=(b-b_0)^{-1}\left(\mathbb{R}_{+}^{*}\right)$.
	Assume that $\mu_{d+1} \left(\Omega_{b_0}^{\zeta}\backslash \Omega_{b}^{\zeta}\right)\neq 0$ ($d=1,2$). Then 
	$\Omega_{b_0}^{\zeta}\backslash \Omega_{b}^{\zeta}\neq \varnothing .$ Let $(X_0,y_0) \in \Omega_{b_0}^{\zeta}\backslash \Omega_{b}^{\zeta}$. 
	By definition,  $b(X_0)>b_0(X_0)$. Using the fact that $b$ and $b_0$ are continuous, there exists a number  $r>0$, such that 
	$$\forall X\in B_r^d(X_0),\quad b(X)>b_0(X).$$
	Now consider the following subset of $\Omega_{b_0}^{\zeta}\backslash \Omega_{b}^{\zeta}$
	$$D:=\left\{ (X,y)\in B_r^d(X_0) \times\mathbb{R},\;\;   b_0(X)<y<b(X) \right\}.$$
	Let $\rho>0$ small enough and pick any  $(X_1,y_1)\in D_{4\rho}$. Then, by using the fact that $b_0$ is a function, we obtain that
	$$(X_1,y_1)\in \big( \Omega_{b_0}^{\zeta}\big) _{4\rho}\;,$$
	where $D_{4\rho}$ and $\big(\Omega_{b_0}^{\zeta}\big)_{4\rho}$ are defined as in Theorem \ref{propagationsmallnes}.\\
	Using  Lipschitz propagation of smallness (\ref{equaLipschprop}), we get the existence of a positive constant $C_{\rho}=C_{\rho}\left(\Omega_{b_0}^{\zeta_0}\right)$ such that 
	\begin{equation}\label{intermproofcor}
		\displaystyle\int_{B_{\rho}^{d+1}((X_1,y_1))}|\nabla \phi_0|^2\geq C_{\rho} \displaystyle\int_{\Omega_{b_0}^{\zeta}} |\nabla \phi_0|^2.
	\end{equation}
	On the other hand
	\begin{equation}\label{stepfinprofcorolorry}
		\displaystyle\int_{B_{\rho}^{d+1}((X_1,y_1))}|\nabla \phi_0|^2\leq \displaystyle\int_{D}|\nabla \phi_0|^2\leq  \displaystyle\int_{\Omega_{b_0}^{\zeta}\backslash \Omega_{b}^{\zeta}}\left|\nabla\phi_0\right|^2.
	\end{equation}
	Combining (\ref{stepfinprofcorolorry}),(\ref{intermproofcor}) and (\ref{proofcorebeforlaststep}) results in the following equation
	$$\displaystyle\int_{\Omega_{b_0}^{\zeta}} |\nabla \phi_0|^2=0.$$
	Then $\nabla \phi_0=0$ a.e. in $\Omega_{b_0}^{\zeta}$ (i.e. the water is at rest), and this gives
	$\phi _0=Const$ in $\Omega_{b_0}^{\zeta} = \Omega_{b_0}^{\zeta _0}$ and $\psi _0=Const$ on $\mathcal O$, contradicting \eqref{nonconstant}. Therefore, we have
	\begin{equation}\label{first0}
		\mu_{d+1} \left(\Omega_{b_0}^{\zeta}\backslash \Omega_{b}^{\zeta}\right)=0. 
	\end{equation}
	Applying similar arguments to $\Omega_{b}^{\zeta}\backslash \Omega_{b_0}^{\zeta}$, we can establish that
	\begin{equation}\label{scond0}
		\mu_{d+1} \left(\Omega_{b}^{\zeta}\backslash \Omega_{b_0}^{\zeta}\right)=0.
	\end{equation}
	Gathering together (\ref{first0}) and (\ref{scond0}), we infer that the bottoms are equal, namely
	$$b(X)=b_0(X),\quad\forall X\in \mathcal O ,$$
	which completes the proof.
\end{proof}

We conclude this section with two remarks: the first concerns an assumption used in the proof of Theorem~\ref{uniqtheorem}, and the second illustrates the possibility of deriving Theorem~\ref{uniqtheorem} from the logarithmic stability in \cite{bourgeois2010about}.

\begin{remark}
 To derive the outcome of Theorem \ref{uniqtheorem}, we assumed that the velocity of the fluid is not constant along the free surface (condition \eqref{nonconstant}). Consequently, we excluded the case of water at rest (still water). However, as mentioned in \cite[Remark 2.12]{fontelos2017bottom}, in this case, it is not possible to uniquely determine the bathymetry.
\end{remark}

\begin{remark}
    If $\zeta,\;\zeta_0 \in C^{1,1}(\mathcal O)$, we can invoke the unique continuation property from \cite{bourgeois2010about} to obtain the above uniqueness result. However, information on the bottom profile at $\partial \mathcal{O} $ is generally required, except in special situations such as those in \cite{fontelos2017bottom,lecaros2020stability}.
\end{remark}

\section{Stability estimates}\label{sec:stability}
In this section, we establish the stability of the considered inverse problem of Section \ref{sec:introinverseproblem}. Based on Proposition \ref{thoeremgeneral}, we aim to bound from above the terms in the right-hand side of (\ref{inequality3}) by measurements on $\Gamma^{\zeta}$ and $\Gamma^{\zeta_0}$ rather than $\Gamma^{\underline{\zeta}}$. Additionally, we need to bound from below the left-hand side of (\ref{inequality3}) by the $L^1$ norm of $b-b_0$.
Following \cite{lecaros2020stability}, we employ the size estimates method. However, instead of assuming a global fatness condition on the inter-bottom region, we introduce a local fatness condition (Hypothesis \ref{hyp-of-paper}) that makes the technique applicable. For readability, the proofs that require direct (but heavy) computations will be shifted to Appendix~\ref{A.1}-\ref{A.2}.

We initiate the upper-bounding procedure with the following lemma, which yields an upper bound for the difference of the traces of the velocity potential on the two surfaces.
\begin{lemma}\label{l1stab}
	Let $b$, $b_0$, $\zeta$ and $\zeta_0$ be  functions in $C^1\left(\overline{\mathcal O }\right)$ such that (\ref{conditionofliquid}) and (\ref{conditiononsurfaces}) hold. Let 
	$\phi\in H ^2(\Omega_b^{\zeta})$ be a solution of system (\ref{pd3}). Then 
	\begin{equation}\label{upperforsurfacephi}
		\begin{split}
			\left\Vert  \psi-\phi_{\zeta_0}   \right\Vert_{L^2(S_1)}&\leq\left\Vert \zeta-\zeta_0  \right\Vert^{\frac{1}{2}}_{L^{\infty}(\mathcal O)}\left\Vert  \phi  \right\Vert_{H ^2(\Omega_b^{\zeta})},\\
			\left\Vert  (\nabla_X\phi )_{\zeta}-(\nabla_X\phi )_{\zeta_0}   \right\Vert_{L^2(S_1)}&\leq\left\Vert \zeta-\zeta_0  \right\Vert^{\frac{1}{2}}_{L^{\infty}(\mathcal O)}\left\Vert  \phi  \right\Vert_{H ^2(\Omega_b^{\zeta})},\\
			\left\Vert  (\partial_y\phi )_{\zeta}-(\partial_y\phi )_{\zeta_0}   \right\Vert_{L^2(S_1)}&\leq\left\Vert \zeta-\zeta_0  \right\Vert^{\frac{1}{2}}_{L^{\infty}(\mathcal O)}\left\Vert  \phi  \right\Vert_{H ^2(\Omega_b^{\zeta})},
		\end{split}
	\end{equation} 
	where $\phi_{\zeta_0}$, $(\nabla_X \phi)_{\zeta _0}$ and $(\partial _y\phi ) _{\zeta _0}$   are defined by 
	\begin{equation}\label{defforsimplication}
		\phi_{\zeta_0}(X)=\phi(X,\zeta_0(X)),\;\; (\nabla_X\phi )_{\zeta _0}(X)= (\nabla_X\phi ) (X,\zeta _0(X)),\;\; (\partial _y\phi )_{\zeta _0}(X)= 
		(\partial _y\phi ) (X,\zeta_0(X)),\;\forall X\in S_1,
	\end{equation}
	$(\nabla_X\phi)_{\zeta }$ and $(\partial _y\phi ) _{\zeta }$ being defined similarly, 
	and the set $S_1$ is given by (\ref{defs1ands2}).
\end{lemma}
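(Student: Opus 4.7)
The plan is to treat all three inequalities by the same one-dimensional fundamental-theorem-of-calculus argument along vertical segments, followed by Cauchy--Schwarz and Fubini. The starting observation is that on $S_1$ we have $\zeta(X)>\zeta_0(X)$, and moreover $\zeta_0(X)>b(X)$: indeed, \eqref{conditionofliquid} gives $\zeta(X)-b(X)\geq H_0$, while \eqref{conditiononsurfaces} gives $\zeta(X)-\zeta_0(X)\leq H_0/2$, so $\zeta_0(X)\geq b(X)+H_0/2$. Thus, for every $X\in S_1$, the vertical segment $\{X\}\times(\zeta_0(X),\zeta(X))$ is contained in $\Omega_b^{\zeta}$, and the region
$$R:=\{(X,y):\ X\in S_1,\ \zeta_0(X)<y<\zeta(X)\}$$
is a measurable subset of $\Omega_b^{\zeta}$ of height bounded by $\|\zeta-\zeta_0\|_{L^\infty(\mathcal O)}$.

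For the first inequality, I first work with a smooth approximant $\phi_n\in C^\infty(\overline{\Omega_b^{\zeta}})$ of $\phi$ in $H^2(\Omega_b^{\zeta})$, write
$$\phi_n(X,\zeta(X))-\phi_n(X,\zeta_0(X))=\int_{\zeta_0(X)}^{\zeta(X)}\partial_y\phi_n(X,y)\,\mathrm{d}y,$$
apply Cauchy--Schwarz on the inner interval, integrate over $S_1$, and use Fubini to obtain
$$\|\phi_n(\cdot,\zeta)-\phi_n(\cdot,\zeta_0)\|_{L^2(S_1)}^2\leq \|\zeta-\zeta_0\|_{L^\infty(\mathcal O)}\int_R|\partial_y\phi_n|^2\leq \|\zeta-\zeta_0\|_{L^\infty(\mathcal O)}\|\phi_n\|_{H^2(\Omega_b^{\zeta})}^2.$$
Passing to the limit uses continuity of the trace operators $H^2\to H^{3/2}(\Gamma^{\zeta})$ and $H^2\to H^{3/2}(\Gamma^{\zeta_0}\cap(S_1\times\mathbb R))$ on the two Lipschitz graphs, giving the first bound in \eqref{upperforsurfacephi}.

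For the second and third inequalities, the identical computation is carried out componentwise with $\phi$ replaced by $\partial_{X_i}\phi$ and by $\partial_y\phi$ respectively. Since $\nabla\phi\in H^1(\Omega_b^{\zeta})$, its traces on the two Lipschitz graphs above $S_1$ exist in $H^{1/2}$, hence in $L^2$, and the integrand appearing after the one-dimensional Cauchy--Schwarz step is $|\partial_y\nabla_X\phi|^2$ or $|\partial_y^2\phi|^2$, both controlled by $\|\phi\|_{H^2(\Omega_b^{\zeta})}^2$. Summing over components and taking square roots yields the two remaining inequalities.

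The main (minor) obstacle is not the computation itself but the justification of the pointwise vertical integration for $H^2$ data: $\phi$ itself is continuous on $\overline{\Omega_b^{\zeta}}$ by Sobolev embedding when $d\leq 2$, but $\nabla\phi$ is only $H^1$, so for the last two inequalities I must rely on density of $C^\infty(\overline{\Omega_b^{\zeta}})$ in $H^2(\Omega_b^{\zeta})$ (valid since $\Omega_b^{\zeta}$ is Lipschitz) together with continuity of the trace on the two Lipschitz surfaces to pass to the limit in the smooth estimate. Apart from that, everything is a direct application of Cauchy--Schwarz and Fubini.
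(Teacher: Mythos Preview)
Your proof is correct and follows essentially the same route as the paper: fundamental theorem of calculus along vertical segments, Cauchy--Schwarz on the inner integral, then integration over $S_1$ and Fubini, with the second and third inequalities obtained by replacing $\phi$ by the components of $\nabla\phi$. The only difference is in how the computation is justified for non-smooth data: the paper argues directly via Fubini that $\phi(X,\cdot)\in H^1(b(X),\zeta(X))$ for a.e.\ $X$ (so the one-dimensional FTC applies pointwise a.e.), whereas you go through a smooth approximation and pass to the limit by trace continuity; both are standard and lead to the same estimate.
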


\begin{proof}
	Since $\phi \in H ^2(\Omega_b^{\zeta})$, we have $\phi,\,  \partial _y \phi \in L^2(\Omega_b^{\zeta})$, that is 
	$$\displaystyle\int_{\Omega_b^{\zeta}}|\phi|^2<\infty\;\;\text{and}\;\;\displaystyle\int_{\Omega_b^{\zeta}}|\partial_y\phi|^2<\infty.$$
	or, with Fubini theorem
	$$\displaystyle\int_{\mathcal O} \displaystyle\int_{b}^{\zeta} |\phi|^2\, \dd y\,\dd X<\infty\;\;\text{and}\;\;\displaystyle\int_{\mathcal O} \displaystyle\int_{b}^{\zeta}|\partial_y\phi|^2 \dd y\,\dd X <\infty,$$
	thus (see \cite{bogachev2007measure})
	$$ \displaystyle\int_{b}^{\zeta} |\phi|^2 \dd y<\infty\;\;\text{and}\;\; \displaystyle\int_{b}^{\zeta}|\partial_y\phi|^2 \dd y<\infty,\quad \text{ a.e.}\; \text{ on }\; \mathcal O. $$
	Let denote $M$ the set of  $X\in \mathcal O  $ for which at least one of the integrals $\displaystyle\int_{b}^{\zeta} |\phi|^2$ and $\displaystyle\int_{b}^{\zeta}|\partial_y\phi|^2$ is infinite. Then 
	\begin{equation}\label{injectioninR}
		\displaystyle\int_{b(X^{\ast})}^{\zeta(X^{\ast})} |\phi(X^{\ast},y)|^2 \,\dd y<\infty \;\;\text{and}\;\; \displaystyle\int_{b(X^{\ast})}^{\zeta(X^{\ast})}|(\partial_y\phi)(X^{\ast},y)|^2 \,\dd y<\infty,\quad \forall X^{\ast}\in S_1\backslash  M.  
	\end{equation}
	Now, pick any $X^{\ast}$ in $S_1\backslash  M$ and define  $$\phi_{X^{\ast}}(y) :=\phi(X^{\ast},y),\quad y\in ( b(X^{\ast}),\zeta(X^{\ast} ) ) .$$ We infer from (\ref{injectioninR}) that $\phi_{X^{\ast}}\in H^1(b(X^{\ast}),\zeta(X^{\ast}))$. Using Sobolev embedding, we obtain 
	$$\left| \phi_{X^{\ast}}(\zeta(X^{\ast}))-\phi_{X^{\ast}}(\zeta_0(X^{\ast})) \right|=\left|  \displaystyle\int_{\zeta_0(X^{\ast})}^{\zeta(X^{\ast})}\phi_{X^{\ast}}^{'} \,\dd y   \right|.$$
	Using H$\ddot{\text{o}}$lder’s inequality, we derive the following 
	\begin{equation}\label{endoflemma41}
		\left| \phi(X^{\ast},\zeta(X^{\ast}))-\phi(X^{\ast},\zeta_0(X^{\ast})) \right|^2\leq \left| \zeta(X^{\ast}) -\zeta_0(X^{\ast})\right|  \displaystyle\int_{b(X^{\ast})}^{\zeta(X^{\ast})}|(\partial_y\phi)(X^{\ast},\cdot)|^2\, \dd y.
	\end{equation}
	Since the inequality (\ref{endoflemma41}) holds for all $X^{\ast}$ in $S_1\backslash M$, by integrating both sides over $S_1$ and observing that the set $S_1\cap M$ 
	has zero Lebesgue measure, we obtain
	$$\left\Vert  \phi_{\zeta}-\phi_{\zeta_0}   \right\Vert_{L^2(S_1)}\leq\left\Vert \zeta-\zeta_0  \right\Vert^{\frac{1}{2}}_{L^{\infty}(S_1)}\left\Vert  \partial_y\phi  \right\Vert_{L^2(\Omega_b^{\zeta})} .$$
	This establishes the first inequality in (\ref{upperforsurfacephi}).\\
	Replacing $\phi$ with $\nabla_X \phi$ and $\partial_y\phi$, respectively,  and using the fact that $\phi\in H^{2}(\Omega_{b}^{\zeta})$ yields the other required inequalities, which completes the proof.
\end{proof}
Before introducing the proposition that illustrates the upper bounds of the terms in the right-hand side of (\ref{inequality3}), we need the following lemma.
\begin{lemma}\label{uppergradfortheupper}
	Let $b$, $b_0$, $\zeta$ and $\zeta_0$ be functions in $C^1\left(\overline{\mathcal O}\right)$ such that (\ref{conditionofliquid}) and (\ref{conditiononsurfaces}) hold, and  let $\phi\in H^2(\Omega^{\zeta}_{b})$, $\phi_0\in H^2(\Omega^{\zeta_0}_{b_0})$ be solutions of systems (\ref{pd3}) and (\ref{pd4}), respectively. Then
	\begin{equation}\label{relationbetweengradandourmesurement}
		\begin{split}
			\left\Vert (\nabla_X\phi )_{\zeta}-(\nabla_X\phi_0)_{\zeta_0}  \right\Vert_{L^2( \mathcal O )}&\leq \left[G_1\left(\zeta-\zeta_0,\psi-\psi_0,\partial_n\phi_{|_{\Gamma^{\zeta}}}-\partial_n\phi_{0|_{\Gamma^{\zeta_0}}}\right)\right]^{1/2},\\
			\left\Vert (\partial_y\phi ) _{\zeta}- (\partial_y\phi_0)_{ \zeta_0}  \right\Vert_{L^2(\mathcal O )}&\leq \left[G_1\left(\zeta-\zeta_0,\psi-\psi_0,\partial_n\phi_{|_{\Gamma^{\zeta}}}-\partial_n\phi_{0|_{\Gamma^{\zeta_0}}}\right)\right]^{1/2},
		\end{split}
	\end{equation}
	where $(\nabla_X\phi )_{\zeta},  (\partial _y\phi )_{\zeta}, (\nabla_X\phi _0)_{\zeta _0}, 
	(\partial _y\phi_0 )_{\zeta_0}$ are defined according to (\ref{defforsimplication}). The term $G_1$  is given by
	  \begin{equation}\label{firstmajG1}
	  	G_1=\sup(\widehat{G}_1,\widetilde{G}_1),
	  \end{equation} 
	  where, 
	\begin{equation}\label{firstmajG11}
		\begin{split}
			\widehat{G}_1(\zeta-\zeta_0, \psi-\psi_0 ,\partial_n\phi_{|_{\Gamma^{\zeta  }     }}-\partial_n\phi_{0|_{\Gamma^{\zeta_0    }   }} )&= 3\left\Vert \nabla_X\psi-\nabla_X\psi_0  \right\Vert_{L^{2}(\mathcal O )}^2+3\left\Vert  \partial_n\phi_{|_{\Gamma^{\zeta   }    }}-\partial_n\phi_{0|_{\Gamma^{\zeta_0 }      }}\right\Vert_{L^{2}(\mathcal O )}^2\\
		&\hspace{0.3cm}+ 3\left\Vert z_3 \right\Vert_{L^{2}(\mathcal O )}^2 \left\Vert \nabla_X\zeta_0-\nabla_X\zeta   \right\Vert_{L^{\infty}(\mathcal O )}^2,\\
		\end{split}
	\end{equation} 
	and 
	\begin{equation}\label{firstmajG12}
		\begin{split}
			\widetilde{G}_1(\zeta-\zeta_0, \psi-\psi_0 ,\partial_n\phi_{|_{\Gamma^{\zeta  }     }}-\partial_n\phi_{0|_{\Gamma^{\zeta_0    }   }} )&=3\left\Vert \nabla_X\psi-\nabla_X\psi_0  \right\Vert_{L^{2}(\mathcal O )}^2+3\left\Vert \nabla_X\zeta_0 \right\Vert_{L^{\infty}(\mathcal O )}^2\widehat{G}_1\\
			&\hspace{0.3cm}+3 \left\Vert (\partial_y\phi)_{\zeta} \right\Vert_{L^{2}(\mathcal O )}^2 \left\Vert \nabla_X\zeta_0-\nabla_X\zeta   \right\Vert_{L^{\infty}(\mathcal O )}^2.\\
		\end{split}
	\end{equation}
	In  \eqref{firstmajG11}, the coefficient $z_3$ is given  by 
	\begin{equation}\label{z1z2z3}
			z_3=\left| \partial_n\phi_{0|_{\Gamma^{\zeta_0       }}} \right|+ \left|\nabla_X\psi_0  \right|+(1+\left| \nabla_X\zeta_0  \right|)\left|(\partial_y\phi_0)_{\zeta_0}  \right|.
	\end{equation}
\end{lemma}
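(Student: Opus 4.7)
The plan is to reduce the lemma to a pointwise algebraic manipulation on $\mathcal O$ by writing $(\nabla_X\phi)_\zeta$ and $(\partial_y\phi)_\zeta$ as explicit functions of the surface data $\nabla_X\psi$, $\partial_n\phi_{|_{\Gamma^\zeta}}$ and the geometric quantity $\nabla_X\zeta$, and similarly for $\phi_0$. Subtracting these explicit formulas and regrouping will split each difference into three contributions driven respectively by $\psi-\psi_0$, $\zeta-\zeta_0$, and $\partial_n\phi_{|_{\Gamma^\zeta}}-\partial_n\phi_{0|_{\Gamma^{\zeta_0}}}$, which is precisely the structure of $\widehat G_1$ and $\widetilde G_1$.

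First I would record the two identities available on $\Gamma^\zeta$. Differentiating $\psi(X)=\phi(X,\zeta(X))$ with the chain rule yields
$$\nabla_X\psi = (\nabla_X\phi)_\zeta + (\partial_y\phi)_\zeta\,\nabla_X\zeta,$$
while the outward unit normal $n=(-\nabla_X\zeta,1)/\sqrt{1+|\nabla_X\zeta|^2}$ to $\Gamma^\zeta$ (see \eqref{Gthe DNO}) gives
$$\sqrt{1+|\nabla_X\zeta|^2}\,\partial_n\phi_{|_{\Gamma^\zeta}} = (\partial_y\phi)_\zeta - \nabla_X\zeta\cdot(\nabla_X\phi)_\zeta.$$
This is an invertible $2\times 2$ linear system whose solution is
$$(\partial_y\phi)_\zeta = \frac{\sqrt{1+|\nabla_X\zeta|^2}\,\partial_n\phi_{|_{\Gamma^\zeta}} + \nabla_X\zeta\cdot\nabla_X\psi}{1+|\nabla_X\zeta|^2},\qquad (\nabla_X\phi)_\zeta = \nabla_X\psi - (\partial_y\phi)_\zeta\,\nabla_X\zeta.$$
The analogous formulas hold for $\phi_0$ with $\psi_0,\zeta_0,\partial_n\phi_{0|_{\Gamma^{\zeta_0}}}$ in place of the unperturbed quantities. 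Subtracting, and adding/removing the natural mixed cross terms, $(\partial_y\phi)_\zeta-(\partial_y\phi_0)_{\zeta_0}$ will decompose into three pieces whose pre-factors are bounded pointwise by absolute constants and by the reference combination $z_3=|\partial_n\phi_{0|_{\Gamma^{\zeta_0}}}|+|\nabla_X\psi_0|+(1+|\nabla_X\zeta_0|)|(\partial_y\phi_0)_{\zeta_0}|$ appearing in \eqref{z1z2z3}.

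Finally, I would square, apply $(a+b+c)^2\le 3(a^2+b^2+c^2)$ and integrate over $\mathcal O$, pulling $\nabla_X\zeta-\nabla_X\zeta_0$ out in $L^\infty$; the denominators $1+|\nabla_X\zeta|^2\ge 1$ are harmless. This produces exactly the bound $\|(\partial_y\phi)_\zeta-(\partial_y\phi_0)_{\zeta_0}\|_{L^2(\mathcal O)}^2\le\widehat G_1$ from \eqref{firstmajG11}. The companion estimate then follows by inserting this bound into the elementary identity
$$(\nabla_X\phi)_\zeta-(\nabla_X\phi_0)_{\zeta_0} = (\nabla_X\psi-\nabla_X\psi_0) - (\partial_y\phi)_\zeta(\nabla_X\zeta-\nabla_X\zeta_0) - \bigl[(\partial_y\phi)_\zeta-(\partial_y\phi_0)_{\zeta_0}\bigr]\nabla_X\zeta_0,$$
which yields $\|(\nabla_X\phi)_\zeta-(\nabla_X\phi_0)_{\zeta_0}\|_{L^2(\mathcal O)}^2\le\widetilde G_1$ from \eqref{firstmajG12}, with the factor $3\|\nabla_X\zeta_0\|_{L^\infty}^2$ appearing in front of the embedded $\widehat G_1$ term. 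Setting $G_1=\sup(\widehat G_1,\widetilde G_1)$ delivers both inequalities. The hard part will be pure bookkeeping: keeping track of the Lipschitz bound $|\sqrt{1+|a|^2}-\sqrt{1+|b|^2}|\le|a-b|$ for the square-root factors and carefully splitting each difference so that the three canonical perturbations are isolated with exactly the coefficients $z_3$, $\|\nabla_X\zeta_0\|_{L^\infty}$ and $\|(\partial_y\phi)_\zeta\|_{L^2}$ appearing in \eqref{firstmajG11}--\eqref{firstmajG12}.
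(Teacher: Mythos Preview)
Your proposal is correct and follows essentially the same route as the paper: it derives the explicit formulas \eqref{formulagradiphi}--\eqref{gradphi0fromula} from the chain rule and the normal derivative, subtracts and regroups the $\partial_y$ difference into three terms (using the Lipschitz bound on $\sqrt{1+|\cdot|^2}$) to obtain $\widehat G_1$, and then plugs this into the three-term decomposition of the $\nabla_X$ difference to obtain $\widetilde G_1$. The paper's proof in Appendix~\ref{A.1} carries out exactly this computation with the same bookkeeping.
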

\begin{proof} The proof results from simple (but cumbersome) computations. It is shifted to Appendix \ref {A.1} for ease of reading.  
\end{proof}
With Lemma~\ref{l1stab} and Lemma~\ref{uppergradfortheupper} at hand, we are now in a position to state the following
\begin{proposition}\label{lemmaaboutupperboundofinte}
	Let $b$, $b_0$, $\zeta$ and $\zeta_0$ be functions in  $C^1\left(\overline{\mathcal O}\right)$ such that (\ref{conditionofliquid}) and (\ref{conditiononsurfaces}) hold, and  let  $\phi\in H^2(\Omega^{\zeta}_{b})$, $\phi_0\in H^2(\Omega^{\zeta_0}_{b_0})$ be solutions of systems (\ref{pd3}) and (\ref{pd4}), respectively. Then
	
	\begin{equation}\label{upperboundsforintegral}
		\begin{array}{ll}
			\vspace{0.2cm}
			\displaystyle\int_{\Gamma^{\underline{\zeta}}} \partial_n\phi(\phi-\phi_0)&\hspace{-0.7cm}\leq G_2\left(\zeta-\zeta_0,\psi-\psi_0\right),\\
			\vspace{0.2cm}
			\displaystyle\int_{\Gamma^{\underline{\zeta}}} \phi_0(\partial_n\phi_0-\partial_n\phi)&\hspace{-0.2cm}\leq G_3\left(\zeta-\zeta_0,\partial_n\phi_{|_{\Gamma^{\zeta}}}-\partial_n\phi_{0|_{\Gamma^{\zeta_0}}}\right),\\
			\vspace{0.2cm}
			||\phi-\phi_0||_{L^2(\Gamma^{\underline{\zeta}})}&\hspace{-1cm}\leq \left[G_4\left(\zeta-\zeta_0,\psi-\psi_0\right)\right]^{1/2},\\
			||\nabla (\phi-\phi_0)||_{L^2(\Gamma^{\underline{\zeta}})}&\hspace{-0.5cm}\leq \left[G_5\left(\zeta-\zeta_0,\psi-\psi_0,\partial_n\phi_{|_{\Gamma^{\zeta}}}-\partial_n\phi_{0|_{\Gamma^{\zeta_0}}}\right)\right]^{1/2},
		\end{array}    
	\end{equation}
	where  $G_2$, $G_3$, $G_4$ and $G_5$ are given by
	\begin{equation}\label{G2to4exprsl}
		\begin{array}{ll}
			G_2(\zeta-\zeta_0,\psi-\psi_0)&\hspace{-3cm} :=\left[\left\Vert   z_4\right\Vert_{L^2(S_1)}\left\Vert\phi\right\Vert_{H^2(\Omega_{b}^{\zeta})} +\left\Vert   z_5\right\Vert_{L^2(\mathcal O)}\left\Vert\phi_0\right\Vert_{H^2(\Omega_{b_0}^{\zeta_0})} \right] \left\Vert \zeta-\zeta_0  \right\Vert_{L^{\infty}(\mathcal O )}^{\frac{1}{2}}\\\vspace{0.3cm}
			&\hspace{0.5cm}+\left[\left\Vert   z_4\right\Vert_{L^2(S_1)} +\left\Vert   z_5\right\Vert_{L^2(\mathcal O)} \right] \left\Vert   \psi-\psi_0 \right\Vert_{L^2(\mathcal O )},\\
			G_3\left(\zeta-\zeta_0,\partial_n\phi_{|_{\Gamma^{\zeta}}}-\partial_n\phi_{0|_{\Gamma^{\zeta_0}}}\right)&\hspace{-1.4cm}:=z_7\left[\left\Vert\phi_{0\zeta_0}\right\Vert_{L^2(\mathcal O)}+\left\Vert\phi_{0\zeta}\right\Vert_{L^2(S_2)}\right] \left\Vert\partial_n\phi_{|_{\Gamma^{\zeta}}}-\partial_n\phi_{0|_{\Gamma^{\zeta_0}}}\right\Vert_{L^2(\mathcal O )}\\
			&\hspace{-0.35cm}+z_7\Bigg\{  \left(  \left\Vert \phi_{0\zeta_0} \right\Vert_{L^2(\mathcal O)}+ \left\Vert  \phi_{0\zeta_0}\nabla_X\zeta_0\right\Vert_{L^2(\mathcal O)} \right)\left\Vert \phi  \right\Vert_{H^2(\Omega_b^{\zeta})}  \\
			&\hspace{-0.3cm}+ \left(  \left\Vert \phi_{0\zeta} \right\Vert_{L^2(S_2)}+ \left\Vert  \phi_{0\zeta} \nabla_X\zeta_0\right\Vert_{L^2(S_2)} \right)\left\Vert \phi_0  \right\Vert_{H^2(\Omega_{b_0}^{\zeta_0})}      \Bigg\}\left\Vert   \zeta-\zeta_0\right\Vert_{L^{\infty}(\mathcal O )}^{\frac{1}{2}}\\\vspace{0.3cm}
			&\hspace{-0.3cm}+z_7z_6\left\Vert\nabla_X\zeta-\nabla_X\zeta_0  \right\Vert_{L^{\infty}(\mathcal O )},\\
		 G_4(\zeta-\zeta_0,\psi-\psi_0)&\hspace{-3cm} :=z_7\bigg[4 \left\Vert \psi-\psi_0 \right\Vert_{L^{2}(\mathcal O )}^2
		+  2 (\left\Vert\phi\right\Vert_{H^2(\Omega_b^{\zeta})}^2+\left\Vert\phi_0\right\Vert_{H^2(\Omega_{b_0}^{\zeta_0})}^2)\left\Vert\zeta-\zeta_0\right\Vert_{L^{\infty}(\mathcal O )} \bigg],  \\   \vspace{0.3cm}
		G_5\left(\zeta-\zeta_0,\psi-\psi_0,\partial_n\phi_{|_{\Gamma^{\zeta}}}-\partial_n\phi_{0|_{\Gamma^{\zeta_0}}}\right)&\hspace{-0.2cm} 
		:=4z_7\left[  \left\Vert  \phi \right\Vert_{H^2(\Omega_{b}^{\zeta})}^2+   \left\Vert  \phi_0 \right\Vert_{H^2(\Omega_{b_0}^{\zeta_0})}^2 \right]\left\Vert   \zeta-\zeta_0\right\Vert_{L^{\infty}(\mathcal O )}  +8z_7 G_1.
		\end{array}
	\end{equation}
	
	Here, the term  $G_1$ is defined in (\ref{firstmajG1}) and  coefficients $z_4$, $z_5$, $z_6$, and $z_7$ are given by 
	\begin{equation}\label{z4toz9}
		\left\{
		\begin{array}{ll}
			z_4 &:= \sqrt{1+|\nabla_X\zeta_0|^2}\;\partial_n\phi_{|_{\Gamma^{\zeta_0}}}, \\
			z_5 &:= \sqrt{1+|\nabla_X\zeta|^2}\;\partial_n\phi_{|_{\Gamma^{\zeta}}},  \\
			z_6&:=  \left\Vert \phi_{0\zeta} \right\Vert_{L^2(S_2)}\left[  \left\Vert  (\nabla_X\phi_0)_{\zeta}\right\Vert_{L^2(S_2)}+ \left\Vert \partial_n\phi_{0|_{\Gamma^{\zeta_0}}} \right\Vert_{L^2(\mathcal O)} \right],\\
			&\hspace{0.5cm}+\left\Vert \phi_{0\zeta_0} \right\Vert_{L^2(\mathcal O)}\left[  \left\Vert  (\nabla_X\phi)_{\zeta}\right\Vert_{L^2(\mathcal O)}+ \left\Vert \partial_n\phi_{|_{\Gamma^{\zeta_0}}} \right\Vert_{L^2(S_1)} \right],\\
			z_7&:=   \max\left\{\left\Vert \sqrt{1+|\nabla_X\zeta|^2} \right\Vert_{L^{\infty}(\mathcal O)},\left\Vert \sqrt{1+|\nabla_X\zeta_0|^2} \right\Vert_{L^{\infty}(\mathcal O)}  \right\}.
		\end{array}
		\right.
	\end{equation}
	
\end{proposition}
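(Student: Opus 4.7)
The four inequalities share a common architecture. I decompose $\Gamma^{\underline{\zeta}}$ into its pieces lying over $S_1$ and $S_2$, on which $\underline{\zeta}$ coincides with $\zeta_0$ and $\zeta$ respectively. Parameterising each piece by $X$, the surface measure $\sqrt{1+|\nabla_X\underline{\zeta}|^2}\,\dd X$ is absorbed either into the normal derivative---producing exactly $z_4=-\nabla_X\zeta_0\cdot(\nabla_X\phi)_{\zeta_0}+(\partial_y\phi)_{\zeta_0}$ on $S_1$ and $z_5$ on $S_2$---or into the scalar bound $z_7$ when the quantity integrated is not a normal derivative. The Dirichlet traces give $\phi_{\zeta}=\psi$ and $\phi_{0\zeta_0}=\psi_0$ on $\mathcal O$, so the only traces that are not measurements are $\phi_{\zeta_0}$ on $S_1$ and $\phi_{0\zeta}$ on $S_2$; these are precisely controlled via Lemma~\ref{l1stab} applied in turn to $\phi$ and to $\phi_0$.

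For the first inequality, after splitting and Cauchy--Schwarz I write $\phi_{\zeta_0}-\phi_{0\zeta_0}=(\phi_{\zeta_0}-\psi)+(\psi-\psi_0)$ on $S_1$ and $\phi_\zeta-\phi_{0\zeta}=(\psi-\psi_0)+(\phi_{0\zeta_0}-\phi_{0\zeta})$ on $S_2$; Lemma~\ref{l1stab} then yields $G_2$ directly. The third inequality follows from the same decomposition applied in squared form via $(a+b)^2\le 2a^2+2b^2$, producing the two $H^2$ contributions of $G_4$ together with $z_7$ from the majorisation of the Jacobian. For the fourth inequality, a triangle inequality reduces $\|\nabla(\phi-\phi_0)\|_{L^2(\Gamma^{\underline\zeta})}^2$ to the $L^2(\mathcal O)$ norms of $(\nabla_X\phi)_\zeta-(\nabla_X\phi_0)_{\zeta_0}$ and $(\partial_y\phi)_\zeta-(\partial_y\phi_0)_{\zeta_0}$, which Lemma~\ref{uppergradfortheupper} bounds by $G_1$, plus slice corrections such as $(\nabla_X\phi)_\zeta-(\nabla_X\phi)_{\zeta_0}$ that Lemma~\ref{l1stab} applied to $\nabla\phi,\nabla\phi_0$ dispatches with the $\|\zeta-\zeta_0\|_{L^\infty}^{1/2}$ prefactor appearing in $G_5$.

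The main obstacle is the second inequality. After the decomposition, the integrand on $S_1$ equals $\phi_{0\zeta_0}(m_0-z_4)$, where $m_0:=\sqrt{1+|\nabla_X\zeta_0|^2}\,\partial_n\phi_0|_{\Gamma^{\zeta_0}}$ is a measurement but $z_4$ involves $\phi$ (not $\phi_0$) evaluated on $\Gamma^{\zeta_0}$ and is therefore not directly accessible. I plan to add and subtract $\sqrt{1+|\nabla_X\zeta|^2}\partial_n\phi|_{\Gamma^\zeta}=-\nabla_X\zeta\cdot(\nabla_X\phi)_\zeta+(\partial_y\phi)_\zeta$ inside $z_4$, which produces the measurement difference $\partial_n\phi|_{\Gamma^\zeta}-\partial_n\phi_0|_{\Gamma^{\zeta_0}}$ together with two kinds of remainder: slice differences $(\nabla_X\phi)_{\zeta_0}-(\nabla_X\phi)_\zeta$ and $(\partial_y\phi)_{\zeta_0}-(\partial_y\phi)_\zeta$, absorbed by Lemma~\ref{l1stab} and responsible for the $\|\zeta-\zeta_0\|_{L^\infty}^{1/2}\|\phi\|_{H^2(\Omega_b^\zeta)}$ factor in $G_3$; and slope differences $(\nabla_X\zeta-\nabla_X\zeta_0)\cdot(\nabla_X\phi)_\zeta$, which yield the $\|\nabla_X\zeta-\nabla_X\zeta_0\|_{L^\infty(\mathcal O)}$ factor of the $z_7z_6$ summand. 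The analogous expansion on $S_2$, where $\phi_0$ must now be moved from $\Gamma^{\zeta_0}$ to the artificial slice $y=\zeta$, produces the $\|\phi_0\|_{H^2(\Omega_{b_0}^{\zeta_0})}$ contribution. Cauchy--Schwarz together with $\max(\sqrt{1+|\nabla_X\zeta|^2},\sqrt{1+|\nabla_X\zeta_0|^2})\le z_7$ then assembles the three groups of terms in $G_3$. The real difficulty is bookkeeping rather than analysis: keeping track of which remainder lives on $S_1$, which on $S_2$, and which on all of $\mathcal O$, and matching each with the correct norm in the definition of $G_3$.
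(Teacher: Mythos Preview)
Your proposal is correct and follows essentially the same route as the paper's proof in Appendix~\ref{A.2}: parametrise $\Gamma^{\underline\zeta}$ over $\mathcal O$, split into $S_1$ and $S_2$, absorb the Jacobian into $z_4,z_5$ or bound it by $z_7$, then for each inequality insert the appropriate add-and-subtract (of $\psi,\psi_0$ for $G_2,G_4$; of $\partial_n\phi|_{\Gamma^\zeta}$ and $\partial_n\phi_0|_{\Gamma^{\zeta_0}}$ for $G_3$; of $(\nabla\phi)_\zeta,(\nabla\phi_0)_{\zeta_0}$ for $G_5$) and close with Lemma~\ref{l1stab} and, for $G_5$, Lemma~\ref{uppergradfortheupper}. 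The only cosmetic difference is that for $G_3$ the paper adds and subtracts $\partial_n\phi|_{\Gamma^\zeta}$ while keeping $C_{\zeta_0}=\sqrt{1+|\nabla_X\zeta_0|^2}$ outside, whereas you phrase it as adding and subtracting $z_5$ inside $z_4$; after unfolding the normal-derivative formula these are the same computation and produce the same slice, slope, and measurement remainders.
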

\begin{proof} Let us sketch the proof. 
	We transform the surface integrals on the left-hand side of (\ref{upperboundsforintegral}) into Lebesgue integrals over $\mathcal O$. Then, employing arguments similar to those used to prove  Lemma \ref{uppergradfortheupper}, we leverage the inequalities provided by Lemma \ref{l1stab} and Lemma \ref{uppergradfortheupper} to derive the desired upper bounds. For ease of reading, the proof is shifted to Appendix~\ref{A.2}.
\end{proof}

\begin{remark}\label{terms-in-O}
Leveraging Lemma \ref{l1stab}, the terms $\left\Vert \phi_{0\zeta} \right\Vert_{L^2(S_2)}$, $\left\Vert  (\nabla_X\phi_0)_{\zeta}\right\Vert_{L^2(S_2)}$, and $\left\Vert \partial_n\phi_{|_{\Gamma^{\zeta_0}}} \right\Vert_{L^2(S_1)}$  of Proposition \ref{lemmaaboutupperboundofinte} can be bounded from above by the $L^{2}$-norm on $\mathcal O$,  e.g.
    $$\left\Vert \phi_{0\zeta} \right\Vert_{L^2(S_2)} \leq \left\Vert  \phi_0  \right\Vert_{H ^2(\Omega_ {b_0}^{\zeta_0})} \left\Vert \zeta-\zeta_0  \right\Vert^{\frac{1}{2}}_{L^{\infty}(\mathcal O)}+\left\Vert \psi_0 \right\Vert_{L^2(\mathcal O)}.$$
\end{remark} 
Having obtained the upper bounds, we now proceed to the next step of the procedure, precisely to bound from below the left-hand side of \eqref{inequality3}. To avoid imposing restrictive assumptions on the region between the bottoms, such as those in \cite{lecaros2020stability}, we adopt the following relaxed fatness hypothesis. 
\begin{hyp}\label{hyp-of-paper}
    Let $r_0$, and $M_0$ be the Lipschitz constants of both $\Omega^{\underline{\zeta}}_{b}$ and $\Omega^{\underline{\zeta}}_{b_0}$ according to Definition \ref{lipsdef}. Suppose $\Omega_{b_0}^{\underline{\zeta}}\backslash \Omega_{b}^{\underline{\zeta}}$  and $\Omega_{b}^{\underline{\zeta}}\backslash \Omega_{b_0}^{\underline{\zeta}}$ admit disjoint decompositions
 $$\Omega_{b_0}^{\underline{\zeta}}\backslash \Omega_{b}^{\underline{\zeta}}=\bigcup_{n\in \mathcal N}D^n,$$
 $$\Omega_{b}^{\underline{\zeta}}\backslash \Omega_{b_0}^{\underline{\zeta}}=\bigcup_{n\in \mathcal N_0}D^n_0,$$
with the index sets  ${\mathcal N},\; {\mathcal N_0}\subset {\mathbb N}^*$ being finite or countable, such that for each $n$ the associated components $D^n$, $ D^n_0$ are nonempty, connected subsets of $\mathbb{R}^{d+1}$  satisfying the fatness condition; that is, there exists a number
 $\rho_n>0$ depending only on $n$, $r_0$, and $M_0$,  and {\bf not on} $\mu_{d+1}(D^n)$ or $\mu_{d+1}(D_0^n)$, such that
\begin{equation}\label{hypfatness}
\left\{
    \begin{array}{ll}
		\mu_{d+1} (D^n_{\rho_n})\geq\frac{1}{2}\mu_{d+1} (D^n),\\
        \mu_{d+1} (D^n_{0\,\rho_n})\geq\frac{1}{2}\mu_{d+1} (D^n_0).
    \end{array}
    \right.
\end{equation}
\end{hyp}

\begin{remarks}$\,$\\
    \begin{itemize}
    \item 
In the hypothesis above, the region between the bottoms is taken to be a finite or countable union of connected, nonempty components, each satisfying the fatness condition. Since
\begin{equation*}
\Omega_{b_0}^{\underline{\zeta}}\backslash \Omega_{b}^{\underline{\zeta}} = \left\{ (X,y)\in {\mathcal O }_+\times\mathbb{R},\;\; b_0(X)<y\le b(X) \right\}, 
\end{equation*}
where ${\mathcal O } _+:=(b-b_0)^{-1}\left(\mathbb{R}^{*}_{+}\right)$  is an open subset of $\mathbb{R}^d$, there exists a sequence $(\mathcal O _n)_{n\in \mathcal N}$ of disjoint connected open sets such that $${\mathcal O} _+=\bigcup_{n\in \mathcal N} {\mathcal O} _n\subset {\mathcal O},$$
	where ${\mathcal N}\subset {\mathbb N}^*$ is finite or countable. Hence
	$$\Omega_{b_0}^{\underline{\zeta}}\backslash \Omega_{b}^{\underline{\zeta}}=\bigcup_{n\in \mathcal N}D^n,$$
	where
    \begin{equation}\label{Dn}
        D^n=\left\{ (X,y)\in  {\mathcal O} _n\times\mathbb{R},\;\; b_0(X)<y\leq b(X) \right\}.
    \end{equation}
An analogous remark applies as well to $\Omega_{b}^{\underline{\zeta}}\backslash \Omega_{b_0}^{\underline{\zeta}}$.
\item The fatness condition is a classical assumption for the size estimate method \cite{alessandrini2002detecting,beretta2017size,lecaros2020stability,morassi2007size}. However, in our work, we do not require the entire region between the bottoms to satisfy it.  More precisely, we allow $\rho_n$ in \eqref{hypfatness} to vanish asymptotically, i.e.  we may have $\rho_n \to 0$ as $n \to \infty$.

\item For any fixed $n\in {\mathcal N}$, if $\partial D^n \in C^{1,\alpha}$, then by \cite[Lemma 2.8]{rosset1998inverse} the set $D^n$ \eqref{Dn} satisfies the fatness condition \eqref{hypfatness}.

    \end{itemize}
\end{remarks}

Now, we can proceed to state the second main result of the paper.

\begin{theorem}\label{stabilitytheorem}
	 Using the same notations as in Proposition \ref{lemmaaboutupperboundofinte},  let  $s \in (0,1/2)$, let $b$, $b_0$, $\zeta$ and $\zeta_0$ be functions in $C^1\left(\overline{\mathcal O }\right)$ such that (\ref{conditionofliquid}) and (\ref{conditiononsurfaces}) hold, and  let $\phi\in H^2(\Omega^{\zeta}_{b})$, $\phi_0\in H^2(\Omega^{\zeta_0}_{b_0})$ be the solutions of systems (\ref{pd3}) and (\ref{pd4}), respectively, with $\phi-\phi_0\neq 0$. There exist two constants $c> e$ and $C>0$ such that, if 
	$$ \sqrt{G_4}+\sqrt{G_5}\leq\frac{||\phi-\phi_0||_{H^2(\Omega_{\overline{b}}^{\underline{\zeta}})}}{2\,c}, $$
	then we have
	\begin{equation}\label{inequalitytheorems}
		\begin{split}
			C_{bot}\left\Vert b-b_0\right\Vert_{L^1(\mathcal O )}
			&\leq \frac{1}{\min\left\{ \displaystyle\int_{\Omega_b^{\zeta}} |\nabla\phi|^2,\displaystyle\int_{\Omega_{b_0}^{\zeta_0}} |\nabla\phi_0|^2  \right\}}\left(G_2+G_3+Tlog_1+Tbot\right) ,\\
		\end{split}
	\end{equation}
	where $Tbot$ is given by (\ref{bottem}) and   $Tlog_1$  is defined  as
	\begin{equation}\label{logterm1}
		\begin{split}
			Tlog_1&=C \left\Vert \partial_n \phi\right\Vert_{L^{2}(\Gamma_{\overline{b}}^{\underline{\zeta}} )}||\phi-\phi_0||_{H^2(\Omega_{\overline{b}}^{\underline{\zeta}})}  \left[  \ln\ln\left(   \dfrac{||\phi-\phi_0||_{H^2(\Omega_{\overline{b}}^{\underline{\zeta}})} }{\sqrt{G_4}+\sqrt{G_5} }    \right)      \right]^{-s/2}\\&\hspace{0.6cm}+3C || \phi_0||_{L^2(\partial\Omega_{\overline{b}}^{\underline{\zeta}})}||\phi-\phi_0||_{H^2(\Omega_{\overline{b}}^{\underline{\zeta}})} \left[  \ln\ln\left(   \dfrac{||\phi-\phi_0||_{H^2(\Omega_{\overline{b}}^{\underline{\zeta}})} }{\sqrt{G_4}+\sqrt{G_5}}     \right)      \right]^{-s^2/2}.
		\end{split}
	\end{equation}
	Here, the constants  $c$ and $C$ are those in Proposition \ref{thoeremgeneral}, and  $C_{bot}$ depends on the Lipschitz constants of $\Omega_{b}^{\zeta}$ and $\Omega_{b_0}^{\zeta_0}$ according to Definition \ref{lipsdef}, as well as on $\mu_{d+1} \left(\Omega_{b}^{\zeta}\right)$, $\mu_{d+1} \left(\Omega_{b_0}^{\zeta_0}\right)$, 
	$\frac{\left\Vert\partial_n\phi\right\Vert_{L^2(\partial\Omega_{b}^{\zeta})}}{\left\Vert\partial_n\phi\right\Vert_{H^{-1/2}(\partial\Omega_{b}^{\zeta})}}$, 
	and $\frac{\left\Vert\partial_n\phi_0\right\Vert_{L^2(\partial\Omega_{b_0}^{\zeta_0})}}{\left\Vert\partial_n\phi_0\right\Vert_{H^{-1/2}(\partial\Omega_{b_0}^{\zeta_0})}}$.
	
\end{theorem}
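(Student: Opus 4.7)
The plan is to combine Proposition \ref{thoeremgeneral} and Proposition \ref{lemmaaboutupperboundofinte} to obtain an explicit upper bound for the left-hand side of \eqref{inequality3} in terms of the surface measurements, and then to produce a matching lower bound in terms of $\|b-b_0\|_{L^1(\mathcal O)}$ by the size estimates method, with the local fatness of Hypothesis \ref{hyp-of-paper} playing the role of the usual global fatness.

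First I would verify that the hypothesis of Proposition \ref{thoeremgeneral} is satisfied. The third and fourth estimates in \eqref{upperboundsforintegral} give
\begin{equation*}
\|\phi-\phi_0\|_{L^2(\Gamma^{\underline\zeta})}+\|\nabla(\phi-\phi_0)\|_{L^2(\Gamma^{\underline\zeta})}\leq \sqrt{G_4}+\sqrt{G_5},
\end{equation*}
which by assumption is at most $\|\phi-\phi_0\|_{H^2(\Omega_{\overline{b}}^{\underline\zeta})}/(2c)$, so \eqref{inequality3} is available. The two surface integrals on its right-hand side are bounded directly by $G_2$ and $G_3$ via the first two estimates of \eqref{upperboundsforintegral}, while $Tbot$ is preserved. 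For the $Tlog$ contribution, since $x\mapsto[\ln\ln(A/x)]^{-s/2}$ is nondecreasing in $x$ for small $x>0$, enlarging the denominator $\|\phi-\phi_0\|_{L^2(\Gamma^{\underline\zeta})}+\|\nabla(\phi-\phi_0)\|_{L^2(\Gamma^{\underline\zeta})}$ to $\sqrt{G_4}+\sqrt{G_5}$ only enlarges $[\ln\ln]^{-s/2}$, which produces the replacement $Tlog\leq Tlog_1$.

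The decisive step, and the main obstacle, is to bound the left-hand side of \eqref{inequality3} from below in terms of $\|b-b_0\|_{L^1(\mathcal O)}$. Using Hypothesis \ref{hyp-of-paper} I would split
\begin{equation*}
\int_{\Omega_{b_0}^{\underline\zeta}\setminus\Omega_b^{\underline\zeta}}|\nabla\phi_0|^2+\int_{\Omega_b^{\underline\zeta}\setminus\Omega_{b_0}^{\underline\zeta}}|\nabla\phi|^2=\sum_{n\in\mathcal N}\int_{D^n}|\nabla\phi_0|^2+\sum_{n\in\mathcal N_0}\int_{D^n_0}|\nabla\phi|^2,
\end{equation*}
and work componentwise. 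For a fixed $D^n\subset\Omega_{b_0}^{\zeta_0}$, I would extract a maximal pairwise disjoint family of balls of radius $\rho_n/8$ with centers in $D^n_{\rho_n/2}$; by maximality the doubled balls (radius $\rho_n/4$) cover $D^n_{\rho_n/2}$, which combined with the fatness bound $\mu_{d+1}(D^n_{\rho_n})\geq \tfrac12\mu_{d+1}(D^n)$ and the inclusion $D^n_{\rho_n}\subset D^n_{\rho_n/2}$ gives a number of balls $N_n\geq c_d^{-1}(\rho_n/4)^{-(d+1)}\tfrac12\mu_{d+1}(D^n)$. Each center is at distance at least $\rho_n/2$ from $\partial D^n$, and hence from $\partial\Omega_{b_0}^{\zeta_0}$ (any segment from a point of $D^n$ to $\partial\Omega_{b_0}^{\zeta_0}$ must cross $\partial D^n$, since $D^n\subset\Omega_{b_0}^{\zeta_0}$), so Theorem \ref{propagationsmallnes} applied with $\rho=\rho_n/8$ yields $\int_{B_{\rho_n/8}((X_k,y_k))}|\nabla\phi_0|^2\geq C_{\rho_n/8}\int_{\Omega_{b_0}^{\zeta_0}}|\nabla\phi_0|^2$. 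Summing over the $N_n$ disjoint balls then gives
\begin{equation*}
\int_{D^n}|\nabla\phi_0|^2\geq K_n\,\mu_{d+1}(D^n)\int_{\Omega_{b_0}^{\zeta_0}}|\nabla\phi_0|^2,\qquad K_n:=\frac{C_{\rho_n/8}}{2c_d(\rho_n/4)^{d+1}}.
\end{equation*}

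The identical argument on $D^n_0\subset\Omega_b^{\zeta}$ with $\phi$ in place of $\phi_0$ yields the matching bound. Summing all $n$, using $\sum_{n\in\mathcal N}\mu_{d+1}(D^n)+\sum_{n\in\mathcal N_0}\mu_{d+1}(D^n_0)=\|b-b_0\|_{L^1(\mathcal O)}$, and setting $C_{bot}:=\inf_n K_n$ (positive since, under Hypothesis \ref{hyp-of-paper}, the $\rho_n$ depend only on $n$, $r_0$ and $M_0$ and not on $\mu_{d+1}(D^n)$ or $\mu_{d+1}(D^n_0)$), we arrive at
\begin{equation*}
C_{bot}\,\|b-b_0\|_{L^1(\mathcal O)}\min\!\left\{\int_{\Omega_b^\zeta}|\nabla\phi|^2,\int_{\Omega_{b_0}^{\zeta_0}}|\nabla\phi_0|^2\right\}\leq G_2+G_3+Tlog_1+Tbot,
\end{equation*}
and dividing by the positive minimum delivers \eqref{inequalitytheorems}. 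The dependencies of $C_{bot}$ stated in the theorem are inherited from those of the propagation-of-smallness constants in Theorem \ref{propagationsmallnes}. The delicate point throughout is preserving the positivity of $C_{bot}$ when the index sets are countably infinite; this is exactly the purpose of the uniform prescription of $\rho_n$ in Hypothesis \ref{hyp-of-paper}.
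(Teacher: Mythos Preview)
Your upper-bound half is correct and matches the paper's argument: Proposition~\ref{lemmaaboutupperboundofinte} feeds into Proposition~\ref{thoeremgeneral}, and the monotonicity of $x\mapsto[\ln\ln(A/x)]^{-s/2}$ gives $Tlog\le Tlog_1$. Your componentwise lower bound on each $D^n$ is also fine; using a Vitali family of balls rather than the paper's tiling by cubes is a cosmetic variation leading to the same kind of estimate $\int_{D^n}|\nabla\phi_0|^2\ge K_n\,\mu_{d+1}(D^n)\int_{\Omega_{b_0}^{\zeta_0}}|\nabla\phi_0|^2$.

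The genuine gap is in the aggregation step. You set $C_{bot}:=\inf_n K_n$ and assert it is positive because ``the $\rho_n$ depend only on $n$, $r_0$ and $M_0$''. But that dependence on $n$ is exactly the problem: Hypothesis~\ref{hyp-of-paper} explicitly allows $\rho_n\to 0$ (see the remark following it), and since the propagation-of-smallness constant $C_\rho$ in Theorem~\ref{propagationsmallnes} degenerates as $\rho\to 0$, one can have $K_n\to 0$ and hence $\inf_n K_n=0$. There is no ``uniform prescription of $\rho_n$'' in Hypothesis~\ref{hyp-of-paper}; you have misread it.

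The paper closes this gap by a two-case argument. If $\widetilde C(\rho_n)\ge\widetilde c>0$ for all $n$, one sums directly as you do. Otherwise, one truncates the sum at a finite index $n_k$ chosen so that $\min_{1\le n\le n_k}\widetilde C(\rho_n)$ is already below $(\rho_1/2)^{d+1}/\mu_{d+1}(\Omega_{b_0}^{\zeta_0})$; since $(\rho_1/2)^{d+1}\le\mu_{d+1}(D^1)\le\sum_{n=1}^{n_k}\mu_{d+1}(D^n)$, this yields $\sum_{n=1}^{n_k}\mu_{d+1}(D^n)\ge\big(\min_{1\le n\le n_k}\widetilde C(\rho_n)\big)\,\mu_{d+1}(\Omega_{b_0}^{\underline\zeta}\setminus\Omega_b^{\underline\zeta})$, and combining with the componentwise bound on the finitely many $D^1,\dots,D^{n_k}$ gives a constant $\big(\min_{1\le n\le n_k}\widetilde C(\rho_n)\big)^2>0$ that depends only on the admissible data. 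The point is that one never needs to control infinitely many $K_n$ simultaneously; a well-chosen finite truncation suffices, and this is the missing idea in your argument.
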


\begin{proof}
	Substituting the estimates of Proposition~\ref{lemmaaboutupperboundofinte} into Proposition~\ref{thoeremgeneral}, and using the fact that $(\ln\!\ln x)^{-s_i}$ is decreasing on $(e,+\infty)$ for $s_i=s/2,\,s^2/2$, 
	 we only have to bound from below the term
	
	$$\displaystyle\int_{\Omega_{b_0}^{\underline{\zeta}}\backslash \Omega_{b}^{\underline{\zeta}}}\left|\nabla\phi_0\right|^2+\displaystyle\int_{\Omega_{b}^{\underline{\zeta}}\backslash \Omega_{b_0}^{\underline{\zeta}}}\left|\nabla\phi\right|^2$$
	by the $L^1$ norm of $b-b_0$.\\
	Applying Hypothesis \ref{hyp-of-paper}, we have 
    $$\Omega_{b_0}^{\underline{\zeta}}\backslash \Omega_{b}^{\underline{\zeta}}=\bigcup_{n\in \mathcal N}D^n,$$
    where $D^n$ is given by \eqref{Dn}. Moreover, for each $n\in  \mathcal N $, there exists a number $0<\rho_n=\rho_n(r_0,M_0,n)  \ll H_0$ such that 
	\begin{equation}\label{fatnessn}
		\mu_{d+1} (D^n_{\rho_n})\geq\frac{1}{2}\mu_{d+1} (D^n).
	\end{equation}
	
	On the other hand, note that if $q$ is any closed cube in ${\mathbb R}^{d+1}$ of size $\varepsilon_n= \frac{\rho _n}{2}$ such that $q\cap D^n_{\rho _n}\ne\emptyset$, then 
	$q\subset D^n$. Indeed, if $z\in q\cap D^n_{\rho _n}$ and $z'\in q$,  so that  $\Vert z-z'\Vert \le \sqrt{d+1}\varepsilon _n \le \frac{\sqrt{3}}{2}\rho _n <\rho _n $, we infer from
	$z\in  D^n_{\rho _n}$ that  $z'\in D^n$. 
	Following \cite{alessandrini2000optimal}, we can cover $D^n_{\rho_n}$ by a finite sequence $(q_{n,j})_{j\in J_n}$ of internally non-overlapping closed cubes $q_{n,j}$
	of size $\varepsilon_n=\frac{\rho_n}{2}$, such that 
    \begin{equation}\label{non-overlapping}
        D^n_{\rho_n}\subset Q_n := \bigcup_{j\in J_n}  q_{n,j}\subset D^n.
    \end{equation}
	   Then 
	$$\displaystyle\int_{D^n}|\nabla\phi_0|^2\geq \displaystyle\sum_{j\in J_n}\displaystyle\int_{q_{n,j}}|\nabla\phi_0|^2\geq \frac{\displaystyle\int_{q_{n,0}}|\nabla\phi_0|^2}{\mu_{d+1} (q_{n,0})}\displaystyle\sum_{j\in J_n}\mu_{d+1} (q_{n,j}),$$
	where $q_{n,0}$ is such that 
	$$\min_{j\in J_n}\displaystyle\int_{q_{n,j}}|\nabla\phi_0|^2=\displaystyle\int_{q_{n,0}}|\nabla\phi_0|^2>0.$$
	Therefore
	\begin{equation}\label{proofstabilityfirsttemr}
		\displaystyle\int_{D_n}|\nabla\phi_0|^2\geq \frac{\mu_{d+1} (D^n_{\rho_n})}{\mu_{d+1} (q_{n,0})}\displaystyle\int_{q_{n,0}}|\nabla\phi_0|^2.
	\end{equation}
	Let $(X_{n,0},y_{n,0})$ to be the center of $q_{n,0}$ and consider $\widetilde{\rho}_n=\frac{\varepsilon_n}{2}.$ Observing that 
	$D^n_{\rho_n}\subset \Omega_{b_0,\rho_n}^{\zeta_0}$ (see Figure \ref{fig:myplot2}), we obtain 
	$$(X_{n,0},y_{n,0})\in \Omega_{b_0,\rho_n}^{\zeta_0}=\Omega_{b_0,4\widetilde{\rho}_n}^{\zeta_0}.$$
	Using  Theorem  \ref{propagationsmallnes},  there exists a constant $C(\rho_n)$ such that 
	\begin{equation}\label{prooftheroemstabilitystap2}
		\displaystyle\int_{q_{n,0}}|\nabla\phi_0|^2\geq\displaystyle\int_{B_{\widetilde{\rho}_n}^{d+1}((X_{n,0},y_{n,0}))}|\nabla\phi_0|^2\geq C(\rho_n)\displaystyle\int_{\Omega_{b_0}^{\zeta_0}}|\nabla\phi_0|^2.
	\end{equation}
	Putting  (\ref{proofstabilityfirsttemr}) and (\ref{prooftheroemstabilitystap2}) together, and using (\ref{fatnessn}), we obtain 
	\begin{equation}\label{upperforeachdomain}
		\displaystyle\int_{D_n}|\nabla\phi_0|^2 \geq\frac{2^d C(\rho_n)}{\rho_n^{d+1}}\mu_{d+1} (D^n)\displaystyle\int_{\Omega_{b_0}^{\zeta_0}}|\nabla\phi_0|^2.
	\end{equation}
	To simplify the notations, we denote
	$$\widetilde{C}(\rho_n)=\frac{2^dC(\rho_n)}{\rho_n^{d+1}}.$$
	We will divide the first part of the proof into two cases based on the nature of the sequence  $\left(\widetilde{C}(\rho_n)\right)_n.$
    
	\vspace*{0.25cm}
	\noindent  \textit{\bf  Case $1$.} \vspace*{0.25cm}
	There exists $\widetilde{c}>0$ such that 
	$$\forall n\geq1,\quad \widetilde{C}(\rho_n)\geq \widetilde{c}. $$
	This scenario occurs e.g. when the zero set of $b-b_0$ is finite. 
	Using the additivity of integral and (\ref{upperforeachdomain}), we have 
	\begin{equation*}
		\begin{split}
			\displaystyle\int_{\Omega_{b_0}^{\underline{\zeta}}\backslash \Omega_{b}^{\underline{\zeta}}}\left|\nabla\phi_0\right|^2&=\displaystyle\sum_{
				n\in \mathcal N}\displaystyle\int_{D_n}|\nabla\phi_0|^2\\
			&\geq \displaystyle\sum_{n\in \mathcal N }\widetilde{C}(\rho_n)\mu_{d+1} (D^n)\displaystyle\int_{\Omega_{b_0}^{\zeta_0}}|\nabla\phi_0|^2\\
			&\geq \widetilde{c}\displaystyle\int_{\Omega_{b_0}^{\zeta_0}}|\nabla\phi_0|^2\displaystyle\sum_{n\in \mathcal N} \mu_{d+1} (D^n)\\
			&=\widetilde{c}\displaystyle\int_{\Omega_{b_0}^{\zeta_0}}|\nabla\phi_0|^2\mu_{d+1} (\Omega_{b_0}^{\underline{\zeta}}\backslash\Omega_b^{\underline{\zeta}}).
		\end{split}
	\end{equation*}
	
	\vspace*{0.25cm}
	\noindent  \textit{\bf Case $2$.} \vspace*{0.25cm}
	There exists a subsequence $\left(\widetilde{C}(\rho_{n_k})\right)_k$ of $\left(\widetilde{C}(\rho_{n})\right)_n$ such that 
	$$\widetilde{C}(\rho_{n_k})  \searrow  0\;\;\text{as}\;\; k\to  +\infty.$$
	Since 
	$\min_{1\le n\le n_k} \widetilde{C}(\rho_{n}) \le  \widetilde{C}  (\rho _{n_k}  )  \to 0$  as $k\to \infty $, there exists a positive integer $k$ that depends only on $\rho_1,$ $d$ and $\mu_{d+1}(\Omega_{b_0}^{\zeta_0})$ such that
    $$\min_{1\le n\le n_k} \widetilde{C}(\rho_{n}) \leq \frac{\left(\frac{\rho_1}{2}\right)^{d+1}}{\mu_{d+1}(\Omega_{b_0}^{\zeta_0})}.$$
Observing that $\left(\frac{\rho_1}{2}\right)^{d+1}=\mu_{d+1}(q_{1,j})\leq \mu_{d+1}(D^1)$, where  $(q_{1,j})_{j\in J_1}$ is given in \eqref{non-overlapping}, we obtain 
    \begin{equation*}
        \begin{split}
            \min_{1\le n\le n_k} \widetilde{C}(\rho_{n})&\leq \frac{\mu_{d+1}(D^1)}{\mu_{d+1}(\Omega_{b_0}^{\zeta_0})}\\
            &\leq \frac{\displaystyle\sum_{n=1}^{n_k} \mu_{d+1} (D^n)}{\mu_{d+1} (\Omega_{b_0}^{\underline{\zeta}}\backslash\Omega_b^{\underline{\zeta}})}.
        \end{split}
    \end{equation*}
    Then 
	$$\displaystyle\sum_{n=1}^{n_k} \mu_{d+1} (D^n)\geq \left( \min_{1\le n\le n_k} \widetilde{C}(\rho_{n}) \right)  \mu_{d+1} (\Omega_{b_0}^{\underline{\zeta}}\backslash\Omega_b^{\underline{\zeta}}). $$
	Using arguments as those in Case $1$, we obtain 
	\begin{equation*}
		\begin{split}
			\displaystyle\int_{\Omega_{b_0}^{\underline{\zeta}}\backslash \Omega_{b}^{\underline{\zeta}}}\left|\nabla\phi_0\right|^2&\geq\displaystyle\sum_{n=1}^{n_k}\displaystyle\int_{D_n}|\nabla\phi_0|^2\\
			&\geq \displaystyle\sum_{n=1}^{n_k}\widetilde{C}(\rho_n)\mu_{d+1} (D^n)\displaystyle\int_{\Omega_{b_0}^{\zeta_0}}|\nabla\phi_0|^2\\
			&= \displaystyle\int_{\Omega_{b_0}^{\zeta_0}}|\nabla\phi_0|^2\displaystyle\sum_{n=1}^{n_k}\widetilde{C}(\rho_{n})\mu_{d+1} (D^n)\\
			&\geq \displaystyle\int_{\Omega_{b_0}^{\zeta_0}}|\nabla\phi_0|^2 \big( \min_{1\leq n\leq n_k}\widetilde{C}(\rho_{n}) \big) \displaystyle\sum_{n=1}^{n_k}\mu_{d+1} (D^n)\\
			&\geq \left(\min_{1\leq n\leq n_k}\widetilde{C}(\rho_{n})\right)^2 \displaystyle\int_{\Omega_{b_0}^{\zeta_0}}|\nabla\phi_0|^2\mu_{d+1} (\Omega_{b_0}^{\underline{\zeta}}\backslash\Omega_b^{\underline{\zeta}}).
		\end{split}
	\end{equation*}
	Combining both cases leads to the existence of  a number $C_{bot_0}>0$ such that 
	\begin{equation}\label{lowerforphi0}
		\displaystyle\int_{\Omega_{b_0}^{\underline{\zeta}}\backslash \Omega_{b}^{\underline{\zeta}}}\left|\nabla\phi_0\right|^2\geq C_{bot_0}\displaystyle\int_{\Omega_{b_0}^{\zeta_0}}|\nabla\phi_0|^2\mu_{d+1} (\Omega_{b_0}^{\underline{\zeta}}\backslash\Omega_b^{\underline{\zeta}}).
	\end{equation}
	For the integral of $|\nabla\phi|^2$ over $\Omega_{b}^{\underline{\zeta}}
	\backslash\Omega_{b_0}^{\underline{\zeta}}$, and following the same process, 
      we establish the existence of a constant $C_{bot_1}>0$ such that
	\begin{equation}\label{lowerforphi}
		\displaystyle\int_{\Omega_{b}^{\underline{\zeta}}\backslash \Omega_{b_0}^{\underline{\zeta}}}\left|\nabla\phi\right|^2\geq C_{bot_1}\displaystyle\int_{\Omega_{b}^{\zeta}}|\nabla\phi|^2\mu_{d+1} (\Omega_{b}^{\underline{\zeta}}\backslash\Omega_{b_0}^{\underline{\zeta}}).
	\end{equation}
	Summing  (\ref{lowerforphi0}) and (\ref{lowerforphi}) term by term and setting $C_{bot}:=\min\left(C_{bot_0},C_{bot_1}\right)$, we obtain
	
	$$\displaystyle\int_{\Omega_{b_0}^{\underline{\zeta}}\backslash \Omega_{b}^{\underline{\zeta}}}\left|\nabla\phi_0\right|^2+\displaystyle\int_{\Omega_{b}^{\underline{\zeta}}\backslash \Omega_{b_0}^{\underline{\zeta}}}\left|\nabla\phi\right|^2\geq C_{bot}||b-b_0||_{L^1(\mathcal O )}\min\left\{\displaystyle\int_{\Omega_{b_0}^{\zeta_0}}|\nabla \phi_0|^2,\displaystyle\int_{\Omega_{b}^{\zeta}}|\nabla \phi|^2\right\},$$
	which ends the proof.
\end{proof}

We conclude this section with two remarks.
\begin{remark}
	The proof of Theorem \ref{stabilitytheorem}  shows that the constant $C_{bot}$ depends on the oscillation of the bottoms $b$ and $b_0$. Moreover, the first case in the above proof includes the scenario of a finite number of intersections between the bottoms considered in \cite{lecaros2020stability}. 
\end{remark}
\begin{remark}\label{possiblelogstability}  
Based on \cite[Section 5]{lecaros2020stability} and  Remark \ref{log stability result} we can obtain a Log stability by assuming the existence of an ``important'' open subset of $\Gamma^{\underline{\zeta}}$ of class $C^{1,1}$ and invoking the logarithmic estimate in \cite[Theorem 2]{bourgeois2010about} in place of Theorem \ref{loglogstability}. However, it is important to mention that to achieve this, one needs to transform terms in the stability upper bound from those that converge faster than log to terms of the form log, which will impact the convergence in practice.
\end{remark}

\section{Conclusions, further comments, and future work}\label{sec:conclusions}
Using the general water-waves system in $\mathbb{R} ^{d+1}$, $1\le d\le 2$  \cite{lannes2013water}, we have established the uniqueness and the log-log and log stability estimates for the geometric inverse problem of detecting an arbitrary bounded portion of the bottom through measurements of some functions on the free surface of the fluid   together with the measurement of the bottom at the boundary of this selected portion.

Compared with~\cite{fontelos2017bottom}, we proved uniqueness on a truncated fluid domain and showed that the velocity boundary data is not required for bathymetry detection. In addition, we assumed only $C^{1}$ regularity  for the bathymetry. Compared with  \cite{lecaros2020stability}, our stability results do not assume that the free surfaces coincide at the measurement time $t_{0}$. In addition, we do not require the two bottom profiles to intersect at only finitely many points. In the region between the bottoms, we further relax the classical fatness assumption to a local fatness assumption; see Hypothesis \ref{hyp-of-paper}. This illustrates that the  size estimates method can be  used to detect an infinite, countable union of pairwise disjoint connected subsets, each satisfying the fatness condition.

Our analysis applies to both bounded and unbounded fluid domains. In particular, if the domain is bounded, as in \cite{lecaros2020stability}, or if the underwater geometries are flat outside a compact set, then by Proposition \ref{thoeremgeneral} and the facts that
\(\partial_{ n}\phi\big|_{\Gamma_b^{\zeta}}=0\), \(\partial_{ n}\phi_{0}\big|_{\Gamma_{b_0}^{\zeta_0}}=0\) in the bounded case, and
\(\sup_{X\in \partial \mathcal O}\big[\overline{b}(X)-b(X)\big]=0\) in the flat-outside $\overline{\mathcal{O}}$  case, the term \(T_{\mathrm{bot}}\) given by \eqref{bottem}  vanishes. Consequently, there is no need to know the bottom at the boundary points.

Our results were formulated using measurements over the entire free surface above the selected bounded portion of the bottom. Nevertheless, analogous results could have been derived using data from only a subset of that surface, as noted in Remark~\ref{possiblelogstability}. For brevity, we omitted this analysis.

As future work, we plan to exploit our identifiability result (Theorem \ref{uniqtheorem}) to develop an optimization-based method for bathymetry estimation. In order to address large-scale aquatic domains, we intend to rely on the advanced Isogeometric solvers proposed in \cite{meHendrik,montardini2025isogeometric} to construct an efficient solver for the elliptic potential system \eqref{pd3}.

\section*{Acknowledgements}
 LR was partially supported by ANR (ANR-23-EXMA-0007 and ANR-24-CE40-5470) and COFECUB (COFECUB-20232505780P).

\section*{Appendix}
Recall that throughout this paper we omit the dependence on \(t_0\) (see Remark \ref{notationt0}). We also adopt the notation in \eqref{defforsimplication}. 
\appendix
\section{Proof of Lemma \ref{uppergradfortheupper} }\label{A.1}
From  the definition of the normal derivative and using $\psi(X)=\phi(X,\zeta(X)),\;\; X\in \mathcal O $, we obtain the following system

\begin{equation*}
	\left\{
	\begin{array}{ll}
		\vspace{0.2cm}
		\sqrt{1+|\nabla_X\zeta|^2}\partial_n\phi|_{\Gamma^{\zeta}}=-\nabla_X\zeta \cdot (\nabla_X\phi)_{\zeta}+(\partial_y\phi )_{\zeta},\\
		\nabla_X\psi=(\nabla_X\phi)_{\zeta}+(\partial_y\phi)_{\zeta}\nabla_X\zeta.
	\end{array}
	\right.
\end{equation*}
A straightforward algebraic computation gives
\begin{equation}\label{formulagradiphi}
	\left\{
	\begin{array}{ll}
		\vspace{0.2cm}
		(\partial_y\phi )_{\zeta} =\frac{1}{1+|\nabla_X\zeta|^2}\left(\sqrt{1+|\nabla_X\zeta|^2}\,\partial_n\phi_{|_{\Gamma^{\zeta}}}+\nabla_X\zeta\cdot\nabla_X\psi\right),\\
		(\nabla_X\phi )_{\zeta}=\nabla_X \psi-(\partial_y\phi)_{\zeta}\,\nabla_X\zeta.
	\end{array}
	\right.
\end{equation}
Using similar arguments, we obtain the following formulas for $\phi_0$:
\begin{equation}\label{gradphi0fromula}
	\left\{
	\begin{array}{ll}
		\vspace{0.2cm}
		(\partial_y\phi_0 )_{\zeta_0} =\frac{1}{1+|\nabla_X\zeta_0|^2}\left(\sqrt{1+|\nabla_X\zeta_0|^2}\,\partial_n\phi_{0|_{\Gamma^{\zeta_0}}}+\nabla_X\zeta_0\cdot\nabla_X\psi_0\right),\\
		(\nabla_X\phi_0 )_{\zeta_0}=\nabla_X \psi_0-(\partial_y\phi_0)_{\zeta_0}\,\nabla_X\zeta_0.
	\end{array}
	\right.
\end{equation}
We begin with the second inequality (\ref{relationbetweengradandourmesurement}). From systems \eqref{formulagradiphi}–\eqref{gradphi0fromula}, we have
\begin{equation*}
	\begin{split}
		(\partial_y\phi)_{\zeta}-(\partial_y\phi_0)_{\zeta_0}&=\frac{1}{1+|\nabla_X\zeta|^2}\left(\sqrt{1+|\nabla_X\zeta|^2}\partial_n\phi_{|_{\Gamma^{\zeta
		}}}+\nabla_X\zeta\cdot\nabla_X\psi\right)-\frac{1}{1+|\nabla_X\zeta_0|^2}\bigg(\sqrt{1+|\nabla_X\zeta_0|^2}\partial_n\phi_{0|_{\Gamma^{\zeta_0
		}}}+\nabla_X\zeta_0\cdot\nabla_X\psi_0\bigg)\\
		&=\frac{1}{1+|\nabla_X\zeta|^2}\left(\sqrt{1+|\nabla_X\zeta|^2}\partial_n\phi_{|_{\Gamma^{\zeta
		}}}+\nabla_X\zeta\cdot\nabla_X\psi -\sqrt{1+|\nabla_X\zeta_0|^2}\partial_n\phi_{0|_{\Gamma^{\zeta_0
		}}}-\nabla_X\zeta_0\cdot\nabla_X\psi_0 \right)\\
		&\hspace{0.5cm}+\left(\sqrt{1+|\nabla_X\zeta_0|^2}\partial_n\phi_{0|_{\Gamma^{\zeta_0
		}}}+\nabla_X\zeta_0\cdot\nabla_X\psi_0\right)\left( \frac{1}{1+|\nabla_X\zeta|^2}-\frac{1}{1+|\nabla_X\zeta_0|^2}\right)\\
		&=\frac{1}{1+|\nabla_X\zeta|^2}\bigg[ \sqrt{1+|\nabla_X\zeta|^2}\left(\partial_n\phi_{|_{\Gamma^{\zeta
		}}}-\partial_n\phi_{0|_{\Gamma^{\zeta_0
		}}}\right)+\partial_n\phi_{0|_{\Gamma^{\zeta_0
		}}}\left(\sqrt{1+|\nabla_X\zeta|^2}-\sqrt{1+|\nabla_X\zeta_0|^2}\right) \\
		&\hspace{2cm}+ \nabla_X\zeta\cdot\left(\nabla_X\psi-\nabla_X\psi_0\right)+\nabla_X\psi_0\cdot(\nabla_X\zeta-\nabla_X\zeta_0)             \bigg]\\
		&\hspace{2.1cm}+(1+|\nabla_X\zeta_0|^2)(\partial_y\phi_0)_{\zeta_0}\frac{(\nabla_X\zeta_0-\nabla_X\zeta)\cdot(\nabla_X\zeta+\nabla_X\zeta_0)}{(1+|\nabla_X\zeta|^2)(1+|\nabla_X\zeta_0|^2)}\\
		&=\frac{1}{\sqrt{1+|\nabla_X\zeta|^2}}\left(\partial_n\phi_{|_{\Gamma^{\zeta
		}}}-\partial_n\phi_{0|_{\Gamma^{\zeta_0
		}}}\right)+\frac{1}{1+|\nabla_X\zeta|^2}\partial_n\phi_{0|_{\Gamma^{\zeta_0}}}\frac{(\nabla_X\zeta-\nabla_X\zeta_0)\cdot(\nabla_X\zeta+\nabla_X\zeta_0)}{\sqrt{1+|\nabla_X\zeta|^2}+\sqrt{1+|\nabla_X\zeta_0|^2}}\\
		&\hspace{0.5cm}+\frac{\nabla_X\zeta\cdot(\nabla_X\psi-\nabla_X\psi_0)}{1+|\nabla_X\zeta|^2}+\frac{1}{1+|\nabla_X\zeta|^2}\nabla_X\psi_0\cdot(\nabla_X\zeta-\nabla_X\zeta_0)\\&\hspace{3cm}+\frac{(\nabla_X\zeta+\nabla_X\zeta_0)\cdot(\nabla_X\zeta_0-\nabla_X\zeta)}{1+|\nabla_X\zeta|^2}(\partial_y\phi_0)_{\zeta_0}.
	\end{split}
\end{equation*}
Then
\begin{equation*}
	\begin{split}
		\left| (\partial_y\phi)_{\zeta}-(\partial_y\phi_0)_{\zeta_0}\right|&\leq \left| \partial_n\phi_{|_{\Gamma^{\zeta       }}}-\partial_n\phi_{0|_{\Gamma^{\zeta_0       }}} \right|+\left| \partial_n\phi_{0|_{\Gamma^{\zeta_0       }}} \right|\left|  \nabla_X\zeta_0-\nabla_X\zeta\right|+ \left| \nabla_X\psi-\nabla_X\psi_0 \right|\\
		&\hspace{0.3cm}+ \left|\nabla_X\psi_0  \right|\left|\nabla_X\zeta_0-\nabla_X\zeta  \right|+(1+\left|\nabla_X\zeta_0  \right|)\left|(\partial_y\phi_0)_{\zeta_0}  \right| \left| \nabla_X\zeta_0-\nabla_X\zeta \right|\\
		&= \left| \nabla_X\psi-\nabla_X\psi_0 \right|+ \left| \partial_n\phi_{|_{\Gamma^{\zeta    }   }}-\partial_n\phi_{0|_{\Gamma^{\zeta_0     }  }} \right|\\
		&\hspace{0.3cm}+\bigg[  \left| \partial_n\phi_{0|_{\Gamma^{\zeta_0       }}} \right|+ \left|\nabla_X\psi_0  \right|+(1+\left|\nabla_X\zeta_0  \right|)\left|(\partial_y\phi_0)_{\zeta_0}  \right|\bigg]  \left| \nabla_X\zeta_0-\nabla_X\zeta \right|.
	\end{split}
\end{equation*}
Squaring the inequality, rearranging terms, and  invoking the notation \eqref{z1z2z3}, then integrating over $\mathcal{O}$ yields:

$$\left\Vert (\partial_y\phi)_{\zeta}-(\partial_y\phi_0)_{\zeta_0} \right\Vert_{L^{2}(\mathcal O )}^2\leq \widehat{G}_1(\zeta-\zeta_0, \psi-\psi_0 ,\partial_n\phi_{|_{\Gamma^{\zeta   }    }}-\partial_n\phi_{0|_{\Gamma^{\zeta_0   }    }} ),$$
where $\widehat{G}_1$ is given by \eqref{firstmajG11}.\\
We now proceed to establish the first inequality (\ref{relationbetweengradandourmesurement}). Using \eqref{formulagradiphi}-\eqref{gradphi0fromula}, we have
\begin{equation*}
	\begin{split}
		(\nabla_X \phi)_{\zeta}- (\nabla_X \phi_0)_{\zeta_0}&=\nabla_X \psi-(\partial_y\phi)_{\zeta}\,\nabla_X\zeta-\nabla_X \psi_0+(\partial_y\phi_0)_{\zeta_0}\,\nabla_X\zeta_0\\
		&=\nabla_X \psi-\nabla_X \psi_0-(\partial_y\phi)_{\zeta}\big(\nabla_X \zeta -\nabla_X \zeta_0\big)+\big((\partial_y\phi_0)_{\zeta_0}-(\partial_y\phi)_{\zeta} \big)\nabla_X\zeta_0.
	\end{split}
\end{equation*}
Then 
\begin{equation*}
	\begin{split}
		\left| (\nabla_X \phi)_{\zeta}- (\nabla_X \phi_0)_{\zeta_0}\right|&\leq \left|\nabla_X \psi-\nabla_X \psi_0 \right|+\left|\nabla_X \zeta_0\right| \left|(\partial_y\phi_0)_{\zeta_0}-(\partial_y\phi)_{\zeta} \right|+ \left| (\partial_y\phi)_{\zeta}\right| \left| \nabla_X \zeta -\nabla_X \zeta_0\right|
	\end{split}
\end{equation*}
By a similar argument to the one above, we obtain
$$\left\Vert (\nabla_X\phi)_{\zeta}-(\nabla_X\phi_0)_{\zeta_0} \right\Vert_{L^{2}(\mathcal O )}^2\leq \widetilde{G}_1(\zeta-\zeta_0, \psi-\psi_0 ,\partial_n\phi_{|_{\Gamma^{\zeta   }    }}-\partial_n\phi_{0|_{\Gamma^{\zeta_0   }    }} ),$$
where  $\widetilde{G}_1$ is given by \eqref{firstmajG12}, which completes the proof. \qed
\section{Proof of Proposition \ref{lemmaaboutupperboundofinte} }\label{A.2}
In this part, we also adopt the notation (\ref{z4toz9}). We have
\begin{equation*}
	\begin{split}
		\displaystyle\int_{\Gamma^{\underline{\zeta}}} \partial_n\phi(\phi-\phi_0)&=\displaystyle\int_{\mathcal O }\partial_n\phi_{|_{\Gamma^{\underline{\zeta}}}}(\phi_{|_{\Gamma^{\underline{\zeta}}}}-\phi_{0|_{\Gamma^{\underline{\zeta}}}})\sqrt{1+|\nabla_X\underline{\zeta}|^2}\\
		&=\displaystyle\int_{\mathcal O }\left(-\nabla_X\underline{\zeta}\cdot(\nabla_X\phi)_{\underline{\zeta}}+(\partial_y\phi)_{\underline{\zeta}}\right)(\phi_{\underline{\zeta}}-\phi_{0\underline{\zeta}})\\
		&=\displaystyle\int_{S_1}\left(-\nabla_X\zeta_0\cdot(\nabla_X\phi)_{\zeta_0}+(\partial_y\phi)_{\zeta_0}\right)(\phi_{\zeta_0}-\phi_{0\zeta_0})\\
		&\hspace{0.5cm}+\displaystyle\int_{S_2}\left(-\nabla_X\zeta\cdot(\nabla_X\phi)_{\zeta}+(\partial_y\phi)_{\zeta}\right)(\phi_{\zeta}-\phi_{0\zeta}),
	\end{split}
\end{equation*}
where $S_1$ and $S_2$ are given by (\ref{defs1ands2}). Using H$\ddot{\text{o}}$lder's inequality and Lemma \ref{l1stab}, we have 
\begin{equation*}
	\begin{split}
		\displaystyle\int_{\Gamma^{\underline{\zeta}}} \partial_n\phi(\phi-\phi_0)&=\displaystyle\int_{S_1}z_4(\phi_{\zeta_0}-\phi_{\zeta})+\displaystyle\int_{S_1}z_4(\phi_{\zeta}-\phi_{0\zeta_0})\\
		&\hspace{0.5cm}+\displaystyle\int_{S_2}z_5(\phi_{\zeta}-\phi_{0\zeta_0})+\displaystyle\int_{S_2}z_5(\phi_{0\zeta_0}-\phi_{0\zeta})\\
		&\leq \left\Vert  z_4 \right\Vert_{L^2(S_1)}\left\Vert   \phi_{\zeta_0}-\phi_{\zeta} \right\Vert_{L^2(S_1)}+ \left\Vert   z_4\right\Vert_{L^2(S_1)}  \left\Vert \phi_{\zeta}-\phi_{0\zeta_0}  \right\Vert_{L^2(S_1)}\\
		&\hspace{0.5cm}+ \left\Vert  z_5 \right\Vert_{L^2(S_2)}\left\Vert   \phi_{\zeta}-\phi_{0\zeta_0} \right\Vert_{L^2(S_2)}+ \left\Vert   z_5\right\Vert_{L^2(S_2)}  \left\Vert \phi_{0\zeta_0}-\phi_{0\zeta}  \right\Vert_{L^2(S_2)}\\
		&\leq \left\Vert z_4 \right\Vert_{L^2(S_1)}\left\Vert   \zeta-\zeta_0 \right\Vert_{L^{\infty}(\mathcal O )}^{\frac{1}{2}}\left\Vert\phi\right\Vert_{H^2(\Omega_b^{\zeta})}+ \left\Vert   z_4\right\Vert_{L^2(S_1)}  \left\Vert \psi-\psi_0 \right\Vert_{L^2(S_1)}\\
		&\hspace{0.5cm}+ \left\Vert  z_5 \right\Vert_{L^2(S_2)}\left\Vert   \psi-\psi_0 \right\Vert_{L^2(S_2)}+ \left\Vert   z_5\right\Vert_{L^2(S_2)}  \left\Vert \zeta-\zeta_0  \right\Vert_{L^{\infty}(\mathcal O )}^{\frac{1}{2}}\left\Vert\phi_0\right\Vert_{H^2(\Omega_{b_0}^{\zeta_0})}\\
		&\leq G_2(\zeta-\zeta_0,\psi-\psi_0),
	\end{split}
\end{equation*}
where $G_2$,$z_4$ and $z_5$ are defined in (\ref{G2to4exprsl}) and (\ref{z4toz9}), respectively.\\
Following the same approach, we address the second inequality in (\ref{upperboundsforintegral}). We have
\begin{equation}\label{term2startcomp}
	\begin{split}
		\displaystyle\int_{\Gamma^{\underline{\zeta}}} \phi_0(\partial_n\phi_0-\partial_n\phi)&=\displaystyle\int_{\mathcal O }\phi_{0\underline{\zeta}}(\partial_n\phi_{0|_{\Gamma^{\underline{\zeta}}}}-\partial_n\phi_{|_{\Gamma^{\underline{\zeta}}}})\sqrt{1+|\nabla_X\underline{\zeta}|^2}\\
		&=\displaystyle\int_{S_1}\phi_{0\zeta_0}(\partial_n\phi_{0|_{\Gamma^{\zeta_0}}}-\partial_n\phi_{|_{\Gamma^{\zeta_0}}})\sqrt{1+|\nabla_X\zeta_0|^2}+\displaystyle\int_{S_2}\phi_{0\zeta}(\partial_n\phi_{0|_{\Gamma^{\zeta}}}-\partial_n\phi_{|_{\Gamma^{\zeta}}})\sqrt{1+|\nabla_X\zeta|^2}\\
		&=\displaystyle\int_{S_1}\phi_{0\zeta_0}(\partial_n\phi_{0|_{\Gamma^{\zeta_0}}}-\partial_n\phi_{|_{\Gamma^{\zeta}}})\sqrt{1+|\nabla_X\zeta_0|^2}+\displaystyle\int_{S_1}\phi_{0\zeta_0}(\partial_n\phi_{|_{\Gamma^{\zeta}}}-\partial_n\phi_{|_{\Gamma^{\zeta_0}}})\sqrt{1+|\nabla_X\zeta_0|^2}\\
		&\hspace{0.4cm}+\displaystyle\int_{S_2}\phi_{0\zeta}(\partial_n\phi_{0|_{\Gamma^{\zeta}}}-\partial_n\phi_{0|_{\Gamma^{\zeta_0}}})\sqrt{1+|\nabla_X\zeta|^2}+\displaystyle\int_{S_2}\phi_{0\zeta}(\partial_n\phi_{0|_{\Gamma^{\zeta_0}}}-\partial_n\phi_{|_{\Gamma^{\zeta}}})\sqrt{1+|\nabla_X\zeta|^2}.
	\end{split}
\end{equation}
We denote 
$$J_{S_1}:=\displaystyle\int_{S_1}C_{\zeta_0}\phi_{0\zeta_0}(\partial_n\phi_{|_{\Gamma^{\zeta}}}-\partial_n\phi_{|_{\Gamma^{\zeta_0}}}),\;\;J_{S_2}:=\displaystyle\int_{S_2}C_{\zeta}\phi_{0\zeta}(\partial_n\phi_{0|_{\Gamma^{\zeta}}}-\partial_n\phi_{0|_{\Gamma^{\zeta_0}}}),$$
where
$$C_{\zeta} :=\sqrt{1+|\nabla_X\zeta|^2},\;\; C_{\zeta_0}:=\sqrt{1+|\nabla_X\zeta_0|^2}.$$
Then
\begin{equation*}
	\begin{split}
		J_{S_1}&=\displaystyle\int_{S_1}C_{\zeta_0}\phi_{0\zeta_0}\bigg[\frac{1}{\sqrt{1+|\nabla_X\zeta|^2}}(-\nabla_X\zeta\cdot(\nabla_X\phi)_{\zeta}+(\partial_y\phi)_{\zeta}) -\frac{1}{\sqrt{1+|\nabla_X\zeta_0|^2}}(-\nabla_X\zeta_0\cdot(\nabla_X\phi)_{\zeta_0}+(\partial_y\phi)_{\zeta_0})     \bigg]\\
		&=\displaystyle\int_{S_1}C_{\zeta_0}\phi_{0\zeta_0}\frac{1}{\sqrt{1+|\nabla_X\zeta|^2}}\left[-\nabla_X\zeta\cdot(\nabla_X\phi)_{\zeta}+(\partial_y\phi)_{\zeta}+\nabla_X\zeta_0\cdot(\nabla_X\phi)_{\zeta_0}-(\partial_y\phi)_{\zeta_0}
		\right]\\
		&\hspace{0.5cm}+\displaystyle\int_{S_1}C_{\zeta_0}\phi_{0\zeta_0}\left[-\nabla_X\zeta_0\cdot(\nabla_X\phi)_{\zeta_0}+(\partial_y\phi)_{\zeta_0}\right]\left(\frac{1}{\sqrt{1+|\nabla_X\zeta|^2}} -\frac{1}{\sqrt{1+|\nabla_X\zeta_0|^2}}\right)
		\\
		&=\displaystyle\int_{S_1}C_{\zeta_0}\phi_{0\zeta_0}\frac{1}{\sqrt{1+|\nabla_X\zeta|^2}}\bigg[((\partial_y\phi)_{\zeta}-(\partial_y\phi)_{\zeta_0})+\nabla_X\zeta_0\cdot((\nabla_X\phi)_{\zeta_0}-(\nabla_X\phi)_{\zeta})+(\nabla_X\phi)_{\zeta}\cdot(\nabla_X\zeta_0-\nabla_X\zeta)\bigg]\\
		&\hspace{0.5cm}+\displaystyle\int_{S_1}C_{\zeta_0}\phi_{0\zeta_0}\partial_n\phi_{|_{\Gamma^{\zeta_0}}}\sqrt{1+|\nabla_X\zeta_0|^2}\dfrac{(\nabla_X\zeta_0-\nabla_X\zeta)\cdot(\nabla_X\zeta_0+\nabla_X\zeta)}{\sqrt{1+|\nabla_X\zeta|^2}\sqrt{1+|\nabla_X\zeta_0|^2}\left(\sqrt{1+|\nabla_X\zeta|^2}+\sqrt{1+|\nabla_X\zeta_0|^2}\right)}\\
		&\leq \left\Vert C_{\zeta_0}\phi_{0\zeta_0} \right\Vert_{L^2(S_1)}\left\Vert (\partial_y\phi)_{\zeta_0}-(\partial_y\phi)_{\zeta} \right\Vert_{L^2(S_1)}+\left\Vert C_{\zeta_0}\phi_{0\zeta_0} \nabla_X\zeta_0 \right\Vert_{L^2(S_1)} \left\Vert  (\nabla_X\phi)_{\zeta_0}-(\nabla_X\phi)_{\zeta}\right\Vert_{L^2(S_1)}\\
		&\hspace{0.4cm}+\left\Vert C_{\zeta_0} \phi_{0\zeta_0}(\nabla_X\phi)_{\zeta} \right\Vert_{L^1(S_1)}\left\Vert   \nabla_X\zeta_0-\nabla_X\zeta\right\Vert_{L^{\infty}(S_1)}+ \left\Vert C_{\zeta_0}\phi_{0\zeta_0}\partial_n\phi_{|_{\Gamma^{\zeta_0}}}  \right\Vert_{L^1(S_1)}\left\Vert\nabla_X\zeta_0-\nabla_X\zeta  \right\Vert_{L^{\infty}(S_1)}.\\
	\end{split}
\end{equation*}
Therefore
\begin{equation}\label{defJ1}
	\begin{split}
		J_{S_1}&\leq\left[  \left\Vert C_{\zeta_0}\phi_{0\zeta_0} \right\Vert_{L^2(S_1)}+ \left\Vert  C_{\zeta_0}\phi_{0\zeta_0} \nabla_X\zeta_0\right\Vert_{L^2(S_1)} \right]\left\Vert \phi  \right\Vert_{H^2(\Omega_b^{\zeta})}  \left\Vert   \zeta_0-\zeta\right\Vert_{L^{\infty}(S_1)}^{\frac{1}{2}}\\
		&\hspace{0.4cm}+\left\Vert C_{\zeta_0} \phi_{0\zeta_0} \right\Vert_{L^2(S_1)}\left[  \left\Vert  (\nabla_X\phi)_{\zeta}\right\Vert_{L^2(S_1)}+ \left\Vert \partial_n\phi_{|_{\Gamma^{\zeta_0}}} \right\Vert_{L^2(S_1)} \right]\left\Vert\nabla_X\zeta_0-\nabla_X\zeta  \right\Vert_{L^{\infty}(S_1)}.
	\end{split}
\end{equation}
Replacing $\partial_n\phi$,$\phi_{0\zeta_0}$, $C_{\zeta_0}$ and $S_1$ by $\partial_n\phi_0$,$\phi_{0\zeta}$, $C_{\zeta}$ and $S_2$, respectively, we obtain 
\begin{equation}\label{defJ2}
	\begin{split}
		J_{S_2}&\leq\left[  \left\Vert C_{\zeta}\phi_{0\zeta} \right\Vert_{L^2(S_2)}+ \left\Vert C_{\zeta}  \phi_{0\zeta}\nabla_X\zeta_0\right\Vert_{L^2(S_2)} \right]\left\Vert \phi_0  \right\Vert_{H^2(\Omega_{b_0}^{\zeta_0})}  \left\Vert   \zeta_0-\zeta\right\Vert_{L^{\infty}(S_2)}^{\frac{1}{2}}\\
		&\hspace{0.4cm}+\left\Vert  C_{\zeta}\phi_{0\zeta} \right\Vert_{L^2(S_2)}\left[  \left\Vert  (\nabla_X\phi_0)_{\zeta}\right\Vert_{L^2(S_2)}+ \left\Vert \partial_n\phi_{0|_{\Gamma^{\zeta_0}}} \right\Vert_{L^2(S_2)} \right]\left\Vert\nabla_X\zeta_0-\nabla_X\zeta  \right\Vert_{L^{\infty}(S_2)}.
	\end{split}
\end{equation}
Injecting (\ref{defJ1}) and (\ref{defJ2}) into (\ref{term2startcomp}), we obtain  
\begin{equation*}
	\begin{split}
		\displaystyle\int_{\Gamma^{\underline{\zeta}}} \phi_0(\partial_n\phi_0-\partial_n\phi)&\leq\left[\left\Vert C_{\zeta_0}\phi_{0\zeta_0}\right\Vert_{L^2(S_1)}+\left\Vert C_{\zeta}\phi_{0\zeta}\right\Vert_{L^2(S_2)}\right] \left\Vert\partial_n\phi_{|_{\Gamma^{\zeta}}}-\partial_n\phi_{0|_{\Gamma^{\zeta_0}}}\right\Vert_{L^2(\mathcal O )}\\
		&\hspace{0.5cm}+\bigg[  \left(  \left\Vert C_{\zeta_0}\phi_{0\zeta_0} \right\Vert_{L^2(S_1)}+ \left\Vert  C_{\zeta_0}\phi_{0\zeta_0}\nabla_X\zeta_0\right\Vert_{L^2(S_1)} \right)\left\Vert \phi  \right\Vert_{H^2(\Omega_b^{\zeta})}  \\
		&\hspace{0.6cm}+   \left(  \left\Vert C_{\zeta}\phi_{0\zeta} \right\Vert_{L^2(S_2)}+ \left\Vert C_{\zeta} \phi_{0\zeta}\nabla_X\zeta_0\right\Vert_{L^2(S_2)} \right)\left\Vert \phi_0  \right\Vert_{H^2(\Omega_{b_0}^{\zeta_0})}      \bigg]\left\Vert   \zeta_0-\zeta\right\Vert_{L^{\infty}(\mathcal O )}^{\frac{1}{2}}\\
		&\hspace{0.7cm}+Z_6\left\Vert\nabla_X\zeta_0-\nabla_X\zeta  \right\Vert_{L^{\infty}(\mathcal O )},
	\end{split}
\end{equation*}
where
\begin{equation*}
	\begin{split}
		Z_6&=\left\Vert C_{\zeta}\phi_{0\zeta} \right\Vert_{L^2(S_2)}\left[  \left\Vert  (\nabla_X\phi_0)_{\zeta}\right\Vert_{L^2(S_2)}+ \left\Vert \partial_n\phi_{0|_{\Gamma^{\zeta_0}}} \right\Vert_{L^2(S_2)} \right]\\
		&\hspace{0.5cm}+\left\Vert C_{\zeta_0}\phi_{0\zeta_0} \right\Vert_{L^2(S_1)}\left[  \left\Vert  (\nabla_X\phi)_{\zeta}\right\Vert_{L^2(S_1)}+ \left\Vert \partial_n\phi_{|_{\Gamma^{\zeta_0}}} \right\Vert_{L^2(S_1)} \right].
	\end{split}
\end{equation*}
We derive the desired inequality using the definition of $z_6$ and $z_7$ in (\ref{z4toz9}).\\
Let us address the third inequality in  (\ref{upperboundsforintegral}). Using Lemma \ref{l1stab}, We have
\begin{equation*}
	\begin{split}
		||\phi-\phi_0||_{L^2(\Gamma^{\underline{\zeta}})}^2&=\displaystyle\int_{\Gamma^{\underline{\zeta}}} |\phi-\phi_0|^2\\
		&=\displaystyle\int_{\mathcal O } |\phi_{\underline{\zeta}}-\phi_{0\underline{\zeta}}|^2\sqrt{1+|\nabla_X\underline{\zeta}|^2}\\
		&=\displaystyle\int_{S_1} |\phi_{\zeta_0}-\phi_{0\zeta_0}|^2\sqrt{1+|\nabla_X\zeta_0|^2}+\displaystyle\int_{S_2} |\phi_{\zeta}-\phi_{0\zeta}|^2\sqrt{1+|\nabla_X\zeta|^2}\\
		&=\displaystyle\int_{S_1} |(\phi_{\zeta_0}-\phi_{\zeta})+(\phi_{\zeta}-\phi_{0\zeta_0})|^2\sqrt{1+|\nabla_X\zeta_0|^2}\\
		&\hspace{0.5cm}+\displaystyle\int_{S_2} |(\phi_{\zeta}-\phi_{0\zeta_0})+(\phi_{0\zeta_0}-\phi_{0\zeta})|^2\sqrt{1+|\nabla_X\zeta|^2}\\
		&\leq 2\left\Vert \sqrt{1+|\nabla_X\zeta_0|^2} \right\Vert_{L^{\infty}(S_1)} \left\Vert \phi_{\zeta_0}-\phi_{\zeta} \right\Vert_{L^{2}(S_1)}^2+2\left\Vert \sqrt{1+|\nabla_X\zeta_0|^2} \right\Vert_{L^{\infty}(S_1)} \left\Vert \phi_{\zeta}-\phi_{0\zeta_0} \right\Vert_{L^{2}(S_1)}^2\\
		&\hspace{0.3cm}+ 2\left\Vert \sqrt{1+|\nabla_X\zeta|^2} \right\Vert_{L^{\infty}(S_2)} \left\Vert \phi_{\zeta}-\phi_{0\zeta_0} \right\Vert_{L^{2}(S_2)}^2+2\left\Vert \sqrt{1+|\nabla_X\zeta|^2} \right\Vert_{L^{\infty}(S_2)} \left\Vert \phi_{0\zeta_0}-\phi_{0\zeta} \right\Vert_{L^{2}(S_2)}^2\\
		&\leq z_7\bigg(4\left\Vert \psi-\psi_0 \right\Vert_{L^{2}(\mathcal O )}^2+2\left. (\left\Vert\phi\right\Vert_{H^2(\Omega_b^{\zeta})}^2+\left\Vert\phi_0\right\Vert_{H^2(\Omega_{b_0}^{\zeta_0})}^2)\left\Vert\zeta-\zeta_0\right\Vert_{L^{\infty}(\mathcal O )}   \right.\bigg)\\
	\end{split}
\end{equation*}
The result follows from the definition of $G_4$ given in (\ref{G2to4exprsl}).\\
At this stage, it is clear that we have only used Lemma \ref{l1stab} to derive the above inequalities. However, to establish the last result in (\ref{upperboundsforintegral}), we also require the outcome of Lemma \ref{uppergradfortheupper}.\\
Continuing with the same approach as before, we obtain
\begin{equation*}
	\begin{split}
		||\nabla (\phi-\phi_0)||_{L^2(\Gamma^{\underline{\zeta}})}^2&=\displaystyle\int_{\Gamma^{\underline{\zeta}}}|\nabla\phi_0-\nabla\phi|^2\\
		&=\displaystyle\int_{\mathcal O }\left|\nabla\phi_{0|_{\Gamma^{\underline{\zeta}}}}-\nabla\phi_{|_{\Gamma^{\underline{\zeta}}}}\right|^2\sqrt{1+|\nabla_X\underline{\zeta}|^2}\\
		&=\displaystyle\int_{\mathcal O }\left[\left|(\nabla_X\phi_0)_{\underline{\zeta}}-(\nabla_X\phi)_{\underline{\zeta}}\right|^2+\left((\partial_y\phi_0)_{\underline{\zeta}}-(\partial_y\phi)_{\underline{\zeta}}\right)^2\right]\sqrt{1+|\nabla_X\underline{\zeta}|^2}\\
		&=\displaystyle\int_{S_1}\left[\left|(\nabla_X\phi_0)_{\zeta_0}-(\nabla_X\phi)_{\zeta_0}\right|^2+\left((\partial_y\phi_0)_{\zeta_0}-(\partial_y\phi)_{\zeta_0}\right)^2\right]\sqrt{1+|\nabla_X\zeta_0|^2}\\
		&\hspace{0.3cm}+\displaystyle\int_{S_2}\left[\left|(\nabla_X\phi_0)_{\zeta}-(\nabla_X\phi)_{\zeta}\right|^2+\left((\partial_y\phi_0)_{\zeta}-(\partial_y\phi)_{\zeta}\right)^2\right]\sqrt{1+|\nabla_X\zeta|^2}\\
		&=\displaystyle\int_{S_1}\left|(\nabla_X\phi_0)_{\zeta_0}-(\nabla_X\phi)_{\zeta}+(\nabla_X\phi)_{\zeta}-(\nabla_X\phi)_{\zeta_0}\right|^2\sqrt{1+|\nabla_X\zeta_0|^2}\\
		&\hspace{0.3cm}+\displaystyle\int_{S_1}\left[(\partial_y\phi_0)_{\zeta_0}-(\partial_y\phi)_{\zeta}+(\partial_y\phi)_{\zeta}-(\partial_y\phi)_{\zeta_0}\right]^2\sqrt{1+|\nabla_X\zeta_0|^2}\\
		&\hspace{0.4cm}+\displaystyle\int_{S_2}\left|(\nabla_X\phi_0)_{\zeta}-(\nabla_X\phi_0)_{\zeta_0}+(\nabla_X\phi_0)_{\zeta_0}-(\nabla_X\phi)_{\zeta}\right|^2\sqrt{1+|\nabla_X\zeta|^2}\\
		&\hspace{0.5cm}+\displaystyle\int_{S_2}\left[(\partial_y\phi_0)_{\zeta}-(\partial_y\phi_0)_{\zeta_0}+(\partial_y\phi_0)_{\zeta_0}-(\partial_y\phi)_{\zeta}\right]^2\sqrt{1+|\nabla_X\zeta|^2}.
	\end{split}
\end{equation*}
Expanding the powers in the above equality and employing H$\ddot{\text{o}}$lder inequality leads to a direct application of  Lemma \ref{l1stab} and Lemma \ref{uppergradfortheupper}. After regrouping terms, we derive the following inequality
$$||\nabla (\phi-\phi_0)||_{L^2(\Gamma^{\underline{\zeta}})}^2\leq G_5, $$
where $G_5$ is given in (\ref{G2to4exprsl}).
As one should now be familiar with the techniques, we omit these straightforward computational details.
This completes the proof.\qed


\end{document}